\numberwithin{equation}{section}
\newtheorem{theorem}{Theorem}[section]
\theoremstyle{plain}
\newtheorem{theoremletter}{Theorem}
\newtheorem{lemma}[theorem]{Lemma}
\newtheorem{lemmaletter}[theoremletter]{Lemma}
\newtheorem{proposition}[theorem]{Proposition}
\theoremstyle{definition}
\newtheorem{remark}{Remark}[section]
\newcommand{\dx}{\,\mathrm{d}x}
\newcommand{\dtau}{\,\mathrm{d}\tau}
\newcommand{\dmu}{\,\mathrm{d}\mu}
\newcommand{\dive}{\mathrm{div}}
\DeclareMathOperator{\supp}{supp}
\DeclareMathOperator{\dist}{dist}
\newcommand{\loca}{\operatorname{loc}}
\title[Nonlinear Quasilinear Schr\"{o}dinger Equation]{Existence of bound states for quasilinear elliptic problems involving critical growth and frequency}
\author[D.~Ferraz]{Diego Ferraz}
\address{Department of Mathematics,
	Federal University of Rio Grande do Norte
	59078-970, Natal-RN, Brazil}
\email{diego.ferraz.br@gmail.com}
\thanks{Corresponding author: Diego Ferraz}
\subjclass[2020]{35J62; 35Q55; 35J20; 35B33; 35J92}
\date{\today}
\keywords{$p$-laplacian; Quasilinear elliptic problems; Critical growth; Variational methods}
\begin{document}
	
\begin{abstract}
In this paper we study the existence of bound states of the following class of quasilinear problems,
\begin{equation*}
\left\{
\begin{aligned}
    &-\varepsilon ^p\Delta_pu+V(x)u^{p-1}=f(u)+u^{p^\ast -1},\ u>0,\   \text{in}\ \mathbb{R}^{N},\\
    &\lim _{|x|\rightarrow \infty }u(x) = 0 ,
\end{aligned}
\right.
\end{equation*}
where $\varepsilon>0$ is small, $1<p<N,$ $f$ is a nonlinearity with general subcritical growth in the Sobolev sense, $p^{\ast } = pN/(N-p)$ and $V$ is a continuous nonnegative potential. By introducing a new set of hypotheses, our analysis includes the critical frequency case which allows the potential $V$ to not be necessarily bounded below away from zero. We also study the regularity and behavior of positive solutions as $|x|\rightarrow \infty$ or $\varepsilon \rightarrow 0,$ proving that they are uniformly bounded and concentrate around suitable points of $\mathbb{R}^N,$ that may include local minima of $V$.
\end{abstract}
	
	\maketitle
	
	\section{Introduction}
In this paper we study the existence of positive bound states for a quasilinear elliptic problem of the type
\begin{equation}\tag{$P_\varepsilon$}\label{P}
\left\{
\begin{aligned}
    &-\varepsilon ^p\Delta_pu+V(x)u^{p-1}=f(u)+u^{p^\ast -1},\ u>0,\  \text{in}\ \mathbb{R}^{N},\\
    &\lim _{|x|\rightarrow \infty }u(x) = 0 ,
\end{aligned}
\right.
\end{equation}
where $\Delta  _pu = \dive (|\nabla u |^{p-2}\nabla u)$ is the $p$-Laplacian operator, $1<p<N,$ $0<\varepsilon <1,$ $f$ is a nonlinearity with general subcritical growth in Sobolev sense and $p^{\ast } = pN/(N-p)$ is the critical Sobolev exponent. Here the potential $V$ is a locally H\"{o}lder continuous function that may vanish in some regions of $\mathbb{R}^N$ with suitable conditions to be described as follows:
	\begin{enumerate}[label=($V_1$),ref=$(V_1)$]
		\item \label{V_autovalor}
		$V(x) \geq 0$ for all $x \in \mathbb{R}^N$ and
		\begin{equation*}
		d _1 := \inf \left\lbrace    \int _{\mathbb{R}^N} |\nabla u| ^p\mathrm{d}x +V(x)|u|^p\dx  :  u\in C_0 ^\infty  (\mathbb{R}^N)\text{ and } \int _{\mathbb{R}^N } |u| ^p\dx =1 \right\rbrace >0.
		\end{equation*}
	\end{enumerate}
	\begin{enumerate}[label=($V_2$),ref=$(V_2)$]
		\item\label{V_brutal} The set $\mathcal{O} = \{ x \in \mathbb{R}^N :  \inf_{\xi  \in \mathbb{R}^N} V(\xi ) =  V(x) = 0 \}$ is bounded.
	\end{enumerate}
        \begin{enumerate}[label=($V_3$),ref=$(V_3)$]
			\item\label{V_delpinofelmer}There is an open bounded smooth domain $\Omega \subset \mathbb{R}^N$ such that $\Omega  \supset \overline{\mathcal{O}}$ and 
			\begin{equation*}
			0 < V(x_0) < \inf_{\partial \Omega } V.
			\end{equation*}
	\end{enumerate}
With respect to the locally H\"{o}lder continuous function $f$, since we are looking for positive solutions, we consider $f(t) = 0$ when $t \leq 0,$ and $f(t) \geq 0,$ whenever $t\geq 0.$ We also take into account the next hypotheses:
	\begin{enumerate}[label=($f_1$),ref=$(f_1)$]
		\item \label{f_brutal} There are $p\leq q_0<q<p^\ast$ that satisfy: for any $\epsilon_0 >0$ there is $C_0 = C(\epsilon_0 )>0$ such that $f(t)  \leq \epsilon_0 t^{q_0-1}+ C_0 t^{q-1},$ for all $t \geq 0;$ Moreover, $q_0 =p,$ when $\inf _{\xi \in \mathbb{R}^N} V(\xi)>0,$ and $q_0 > p,$ if $\inf _{\xi \in \mathbb{R}^N} V(\xi)=0.$
	\end{enumerate}
	\begin{enumerate}[label=($f_2$),ref=$(f_2)$]
		\item \label{f_AR} There exists $p<\mu \leq p^\ast$ such that $\mu F(t) :=\mu  \int _0 ^t f (\tau ) \dtau \leq t f(t),$ for all $t \geq 0.$
	\end{enumerate}
	\begin{enumerate}[label=($f_3$),ref=$(f_3)$]
		\item \label{f_porbaixo} There are $\lambda>0$ and $p<p_0< p^\ast$ such that $F(t)  \geq \lambda t^{p_0},$ for all $t \geq 0.$
	\end{enumerate}
        \begin{enumerate}[label=($f_4$),ref=$(f_4)$]
			\item\label{f_increasing}The function $ t \mapsto (f(t) + t^{p^\ast -1} )t^{1-p}$ is increasing in $(0,\infty);$
	\end{enumerate}

	\subsection{Statement of the main result}
	Under \ref{V_autovalor} and \ref{f_brutal}, by a positive bound state solution (bound states) $u$ of the equation in \eqref{P}, we mean $u \in W^{1,p}(\mathbb{R}^N) ,$ where $W^{1,p}(\mathbb{R}^N)$ is the standard Sobolev space, $u>0$ in $\mathbb{R}^N$ and
	\begin{equation*}
		\varepsilon^p \int _{\mathbb{R}^N}|\nabla u | ^{p-2}\nabla u \cdot \nabla \varphi \dx +\int _{\mathbb{R}^N} V(x)u ^{p-1}\varphi \dx = \int _{\mathbb{R}^N}( f(u) + u^{p ^\ast-1 }) \varphi\dx,\ \forall \, \varphi \in C_0 ^\infty (\mathbb{R}^N).
	\end{equation*}
Our main result is concerned with the existence and the concentration behavior of positive bound states of \eqref{P} which concentrate around a possible local minima of $V.$
\begin{theorem}\label{teo_doo}
In addition to hypotheses \ref{V_autovalor}--\ref{V_delpinofelmer}, \ref{f_brutal}--\ref{f_porbaixo}, assume one of the following conditions:
	\begin{enumerate}[label=($p^\ast _i$),ref=$(p^\ast _i)$]
		\item\label{pi} $N \geq p ^2;$
	\end{enumerate}
	\begin{enumerate}[label=($p^\ast _{ii}$),ref=$(p^\ast _{ii})$]
		\item\label{pii} $p<N <p^2$ and $p^\ast - p/(p+1) - 1 < p_0 <p^\ast -1;$
	\end{enumerate}
	\begin{enumerate}[label=($p^\ast _{iii}$),ref=$(p^\ast _{iii})$]
		\item\label{piii} $p < N <p^2,$ $p ^\ast - p/(p+1) -1 \geq p_0 $ and $\lambda $ large enough;
	\end{enumerate}
Then, there exists $0<\varepsilon_0<1$ such that, for any $0 < \varepsilon <\varepsilon_0:$
\begin{enumerate}[label=(\roman*)]
    \item Problem \eqref{P} has a positive bound state solution $u_\varepsilon\in W^{1,p}(\mathbb{R}^N)\cap C^{1,\alpha }_{\loca}(\mathbb{R}^N);$
    \item $u_\varepsilon$ possesses a global maximum point $z_\varepsilon \in \Omega$ such that
    \begin{equation*}
		\lim _{\varepsilon \rightarrow 0} V(z _\varepsilon ) = V(x_0),
    \end{equation*}
    and $(u_\varepsilon(z_\varepsilon) )_{0<\varepsilon<\varepsilon_0}$ is bounded;
    \item There are $C,$ and $\alpha >0,$ in a such way that
		\begin{equation*}
		u_\varepsilon (x) \leq C \exp \left(  - \alpha \frac{|x - z_\varepsilon|}{\varepsilon}    \right),\ \forall \,x \in \mathbb{R}^N.
		\end{equation*}
\end{enumerate}
Furthermore, taking $\varepsilon \rightarrow 0,$ the functions $w_\varepsilon (x) = u_\varepsilon(\varepsilon x + z_\varepsilon)$ converge uniformly in compact sets of $\mathbb{R}^N$ and weakly in $D^{1,p}(\mathbb{R}^N)$ to a ground state (least energy among all other nontrivial solutions) $w_0\in W^{1,p}(\mathbb{R}^N)\cap C^{1,\alpha }_{\loca}(\mathbb{R}^N) $ of the problem
    \begin{equation}\label{PGS}
\left\{
\begin{aligned}
    &-\Delta_pw+V(x_0)w^{p-1}=f(w)+w^{p^\ast -1},\ w>0,\  \text{in} \ \mathbb{R}^{N},\\
    &\lim _{|x|\rightarrow \infty }w(x) = 0.
\end{aligned}
\right.
\end{equation}

\end{theorem}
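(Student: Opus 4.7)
The overall strategy is a del Pino--Felmer penalization adapted to the critical quasilinear setting. I would first introduce a modified nonlinearity $g_\varepsilon(x,t)$ that coincides with $f(t)+(t^+)^{p^\ast-1}$ inside $\Omega$ and, outside $\Omega$, is truncated to remain below $V(x)t^{p-1}/\kappa$ for some fixed $\kappa>p/(\mu-p)$. The associated penalized energy $I_\varepsilon$ is defined on the space $W^{1,p}_\varepsilon(\mathbb{R}^N)$ endowed with the norm $\|u\|_\varepsilon^p=\varepsilon^p\int|\nabla u|^p\dx+\int V(x)|u|^p\dx$; hypothesis \ref{V_autovalor} ensures this is a complete norm and provides a Poincaré-type embedding into $L^{q_0}$. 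The penalization preserves the mountain-pass geometry coming from \ref{f_brutal}--\ref{f_porbaixo}, while any solution of the penalized problem which is $L^\infty$-small outside $\Omega$ is automatically a solution of \eqref{P}.

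Next I would produce a critical point $u_\varepsilon$ of $I_\varepsilon$ at its mountain-pass level $c_\varepsilon$. The heart of the argument is the estimate $c_\varepsilon<\tfrac{1}{N}S^{N/p}$, where $S$ denotes the best constant in the $p$-Sobolev inequality; below this threshold a concentration-compactness analysis, together with the monotonicity in \ref{f_increasing}, guarantees strong convergence of Palais--Smale sequences and rules out both splitting and vanishing. I would test the mountain-pass path with a truncated Talenti instanton $U_\delta$, rescaled by $\varepsilon$ and centered at the point $x_0$ supplied by \ref{V_delpinofelmer}. The trichotomy \ref{pi}--\ref{piii} then mirrors the classical Brezis--Nirenberg dichotomy: case \ref{pi} follows because the critical remainder is automatically subleading; case \ref{pii} uses that the condition on $p_0$ makes the polynomial gain $\lambda U_\delta^{p_0}$ from \ref{f_porbaixo} dominate the remainder; and case \ref{piii} closes the gap by taking $\lambda$ sufficiently large.

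Once $u_\varepsilon$ is constructed, the remaining items are obtained through a rescaling and limit analysis. A Moser iteration applied to $w_\varepsilon(x)=u_\varepsilon(\varepsilon x+z_\varepsilon)$, where $z_\varepsilon$ is a global maximum of $u_\varepsilon$, yields uniform $L^\infty$ bounds; combined with the $C^{1,\alpha}_{\loca}$ regularity theory for the $p$-Laplacian this gives local uniform convergence of $w_\varepsilon$ to a limit $w_0$ satisfying an autonomous problem with potential $V(x_\infty)$ for some $x_\infty$. Comparing ground-state energies across admissible limit potentials, using \ref{V_brutal}--\ref{V_delpinofelmer}, one forces $V(z_\varepsilon)\to V(x_0)$, identifies $w_0$ as a ground state of \eqref{PGS} and, in particular, shows $z_\varepsilon\in\Omega$. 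The smallness of $u_\varepsilon$ outside $\Omega$ then validates the penalization, so that $u_\varepsilon$ is genuinely a solution of \eqref{P}. Exponential decay in (iii) is finally obtained by constructing a barrier of the form $C\exp(-\alpha|x-z_\varepsilon|/\varepsilon)$ and applying the weak comparison principle for the $p$-Laplacian, using that outside a large ball around $z_\varepsilon$ the potential is bounded below by $d_1>0$ while $u_\varepsilon$ is small.

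The principal obstacle is the mountain-pass estimate $c_\varepsilon<\tfrac{1}{N}S^{N/p}$ in the low-dimensional regimes \ref{pii} and \ref{piii}. Unlike the case $p=2$, the extremal functions of the $p$-Sobolev inequality do not admit closed-form primitives, so the remainder terms produced by the truncation of $U_\delta$ must be estimated to an asymptotic order fine enough that the subcritical gain from $F$\,---\,which is controlled from below only by $\lambda t^{p_0}$\,---\,provably dominates. Balancing these rates is precisely what isolates the threshold $p^\ast-p/(p+1)-1$ appearing in \ref{pii}--\ref{piii} and is, in my view, the most technical part of the argument.
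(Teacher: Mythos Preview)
Your outline is correct and follows essentially the same penalization--mountain-pass--concentration scheme as the paper, including the Talenti test for the energy threshold, Lions' concentration-compactness for Palais--Smale compactness, Moser iteration for uniform $L^\infty$ bounds, energy comparison to pin down the concentration point, and a comparison barrier for the exponential decay. One small correction: in the decay step you cannot use $d_1$ as a pointwise lower bound on $V$ (in the critical-frequency case $\inf V=0$); the paper instead uses $V_0:=\inf_{\partial\Omega}V>0$, which bounds $V$ from below on $\Omega^c$ and hence far from $z_\varepsilon$ once you know $z_\varepsilon\in\Omega$.
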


\subsection{Motivation and related results}The study of \eqref{P} is mainly motivated by the semilinear case ($p=2$) and by looking for standing waves solutions of the nonlinear Schr\"{o}dinger equation
	\begin{equation}\label{PGERAL}
		i \hbar \frac{\partial \psi }{\partial t} = -\hbar ^2 \Delta_x \psi + V_0(x)  \psi  -  g_0 (|\psi |)\ \text{in}\ \mathbb{R}^N ,
	\end{equation}
where $i$ is the imaginary unit, $\hbar$ is the Planck constant and $g_0$ is a general continuous nonlinearity, such that $g_0 (\exp(-i \hbar^{-1} Et ) u ) = \exp(-i \hbar^{-1} E t )g_0(u).$ In fact, if $u$ is a solution of 
\begin{equation}\label{semilinear}
	\left\{
	\begin{aligned}
		&-\varepsilon ^2\Delta u+(V_0(x)-E)u=g(u) &\quad  \text{in} \ \mathbb{R}^{N},&\\
		&u\in  W^{1,2}(\mathbb{R}^N),\ u >0 &\quad  \text{in}\ \mathbb{R}^{N},&
	\end{aligned}
	\right.
\end{equation}
with $\varepsilon = \hbar$, then it is clearly that $\psi(x,t) =\exp(-i \hbar^{-1}Et )u(x)$ solves \eqref{PGERAL} with $g=g_0.$ When $\inf_{x \in \mathbb{R}^N} (V_0(x)-E)=0 ,$ the potential $V=V_0-E$ is said to have \textit{critical frequency} (see \cite{byeon-wang2002,byeon-wang2003,zhang2017}). For more details about \eqref{PGERAL} we refer to \cite{strauss1979,wang1993} and the references given there.

The study of \eqref{P} initially started by the semilinear case \eqref{semilinear} with nonlinearities $g$ having subcritical growth in Sobolev sense. In \cite{rabinowitz1992} P.H. Rabinowitz considered the condition,
\begin{equation}\label{rabinowitz}
\liminf _{|x| \rightarrow \infty}V(x) > \inf_{x\in \mathbb{R}^N} V(x)>0,
\end{equation}
in order to study existence of positive solutions for the equation in \eqref{semilinear} with $g(u)=u^{q},$ $1<q<(N+2)/(N-2)$ and small $\varepsilon,$ by using variational methods and applying suitable variants of the mountain pass theorem. In \cite{wang1993}, X. Wang proved, among several results, that positive ground state solutions of \eqref{semilinear} must concentrate at global minimum points of $V$, considering \eqref{rabinowitz} and $g(u)=u^{q}.$ Later on \cite{delpinofelmer}, M. del Pino and P. L. Felmer introduced the so called \textit{penalization method}, where they developed an approach to prove a local version of the results in \cite{rabinowitz1992}, taking into account the following different condition of \eqref{rabinowitz},
\begin{equation}\label{pot_delfelmer}
\inf _{x \in \Lambda }V(x) < \min _{x \in \partial \Lambda} V(x),
\end{equation}
for some suitable smooth bounded domain $\Gamma$. Roughly speaking the approach of \cite{delpinofelmer} involves the study of an auxiliary problem which consists of a new nonlinearity made by a modification on $g,$ where solutions of this auxiliary problem leads to solution of the initial one, provided $\varepsilon$ is small enough. Particularly, the method of \cite{delpinofelmer} allows to overcome the lack of compactness obtained by not taking \eqref{rabinowitz}. Moreover, considering a general nonlinearity $g$ with subcritical growth, the results of \cite{delpinofelmer} also showed that solutions of \eqref{semilinear} concentrate around local minimum points of $V$ when $\varepsilon$ goes to zero, extending and complementing the results of \cite{wang1993}. In \cite{alves-doo-souto2001} C. O. Alves et al. complemented the results of \cite{delpinofelmer} by considering \eqref{semilinear} and \eqref{pot_delfelmer} with a new nonlinear term involving critical Sobolev growth, more precisely, $g(u) = f(u)+u^{2^\ast-1}.$ For more results about the semilinear case with general nonlinearites we mention \cite{zhang-doo-minbo2017,jeanjean-tanaka2004,byeon-jeanjean2007}, and the references given there.

For the general case $1<p<N,$ the study of \eqref{P} have received an amount of considerable attention recently considering different classes of hypotheses on $V$ and $f$ (see \cite{doo2005,lyberopoulos2018,wang-an-zhang2015,figueiredo-furtado2012}). Next we describe some close related works with respect to our main result and on the arguments used to prove it. In \cite{lyberopoulos2018}, A. N. Lyberopouloss, using a similar approach with respect to our work, studied the existence of bound states for the following equation
\begin{equation*}
-\Delta _ p u + V(x)|u|^{p-2}u = K(x)|u|^{p-2}u + |u|^{p^\ast -2 }u,\quad x \in \mathbb{R}^N,
\end{equation*}
where $V$ and $K$ are nonnegative continuous potentials that may decay to zero as $|x|\rightarrow \infty$ but are free from any integrability or symmetry assumptions. Assuming \eqref{pot_delfelmer}, in \cite{figueiredo-furtado2012} G. M. Figueiredo and M. F. Furtado studied a multiplicity result for \eqref{P} by applying Lusternik-Schnirelmann theory, where they relate the number of positive solutions with the topology of the set where $V$ attains its minimum. On the other hand, supposing \eqref{rabinowitz}, J. Wang et al. in \cite{wang-an-zhang2015} proved that for $\varepsilon$ sufficiently small the problem \eqref{P} has a positive ground state solution with some concentration phenomena, where they also investigated multiplicity of solutions, existence and nonexistence of ground states for \eqref{P}. In \cite{doo2005}, under \eqref{pot_delfelmer}, J. M. do \'{O} proved the existence of positive bound states of \eqref{P} which concentrate around a local minima of $V,$ extending the results of \cite{alves-doo-souto2001} for the quasilinear framework.

It is worth to mention that in all those works the study of concentration phenomena (\cite{doo2005,wang-an-zhang2015,figueiredo-furtado2012}) for \eqref{P} as $\varepsilon \rightarrow 0,$ it is assumed that $V$ is bounded below away from zero, that is, 
\begin{equation*}
\inf_{x \in \mathbb{R}^N}V(x)>0.	
\end{equation*}
Based on this, a natural question arises: Is it possible to obtain existence of solutions and its concentration phenomena for \eqref{P}, whenever $1<p<N,$ when $\inf_{x \in \mathbb{R}^N}V(x)=0$ (critical frequency) with a nonlinear involving critical Sobolev growth? To the best of our knowledge there is no work in the present literature addressing this question. Nevertheless for the semilinear case \eqref{semilinear} with general nonlinearities $g$ in the subcritical growth range, that question was first considered by J. Byeon and Z. Wang in \cite{byeon-wang2002,byeon-wang2003} where the authors introduced the following condition over $V,$ 
\begin{equation*}
	\liminf _{|x| \rightarrow \infty }V(x) > \inf _{x \in \mathbb{R}^N} V(x)=0,
\end{equation*}
and developed a new variational approach to treat that case. Taking into account $m:=\inf _{x \in \Lambda }V(x)\geq 0 $ in \eqref{pot_delfelmer} the authors also constructed localized bound states and studied the concentration of solutions around local minimum of $V,$ which in that case can be positive or zero. In \cite{byeon-wang2002,byeon-wang2003} the behavior at infinity of $g$ in \eqref{semilinear} depends on whether $m$ is positive or not. The critical frequency involving nonlinearities with critical growth was first studied by J. Zhang in \cite{zhang2017}, where the author considered $V$ to be either compactly supported or decay faster than $|x|^{-2}$ at infinity, with aid of a truncation method given in \cite{zhang-doo2015TM}, extending the results of \cite{byeon-wang2002,byeon-wang2003}.

In our work we consider the critical frequency case $\inf_{x \in \mathbb{R}^N}V(x)=0$ in \eqref{P}, by introducing a new set of hypotheses \ref{V_autovalor}--\ref{V_delpinofelmer}. Our analysis for $p=2$ includes Problem \eqref{semilinear}, when is not required any condition on $V$ as $|x|\rightarrow \infty.$ In the study of \eqref{semilinear} by X. Wang in \cite{wang1993}, a similar condition with respect to \ref{V_autovalor} appeared ($p=2$) and it guarantees that the bilinear form $\int _{\mathbb{R}^N } \nabla u \cdot \nabla v + V(x)uv\dx$ is positive (coercive) for $u,v \in C_0^\infty (\mathbb{R}^N).$ Thus, it seems for us that \ref{V_autovalor} is the most reasonable condition in order to the norm operator $u \mapsto \int_{\mathbb{R}^N} |\nabla u|^p + V(\varepsilon x) |u|^p \dx $ be well $C^1$-defined on $W^{1,p}(\mathbb{R}^N).$ Proposition \ref{p_norma} shows the relation between \ref{V_autovalor} and its use on the natural Sobolev space related to the energy functional of \eqref{P}. Additionally, in a careful review on the argument of \cite{doo2005,alves-doo-souto2001}, it appears that hypotheses \ref{V_autovalor}--\ref{V_delpinofelmer} are the natural ones to consider when looking for concentration phenomena in the critical frequency case of \cite{doo2005,alves-doo-souto2001}. Nevertheless, in the sense of items (i)--(iii) of Theorem \ref{teo_doo}, we prove that the solutions of \eqref{P} concentrate at any point $x_0$ satisfying \ref{V_delpinofelmer}, i.e., conditions over the behavior of $V$ in the gap between a neighbourhood of $x_0$ and $\partial \Omega $ are not necessary. Therefore in Theorem \ref{teo_doo} we complement the study made in \cite{byeon-wang2002,byeon-wang2003,zhang2017} by considering the quasilinear case and the results of \cite{alves-doo-souto2001,doo2005} by taking the critical frequency case.

The proof of Theorem \ref{teo_doo} is inspired in the penalization method \cite{delpinofelmer,doo2005,alvessouto2012,liyang2013}. We first analyze the related auxiliary problem (see Section \ref{s_preli}) in an independent way, by just assuming \ref{V_autovalor}, \ref{V_brutal} and \ref{f_brutal}--\ref{f_porbaixo}, where some new regularity and uniform bounds in $\varepsilon$ for \eqref{P} are proved (see Lemma \ref{l_breziskato}). From this, using some recent arguments \cite{elves2009,ambrosio2020,lyberopoulos2018}, our proof allow to improve the bounded below away from zero case \cite[Theorem 1.1]{doo2005}, by taking into account \ref{f_increasing} together with \ref{V_autovalor}--\ref{V_delpinofelmer}, instead of \cite[Hyphothesis ($f_4$)]{doo2005}. 

\subsection{Additional remarks}\label{r_growth} Before we proceed, some comments on our hypotheses are necessary.
\begin{description}
			\item[(i)] Growth condition \ref{f_brutal} implies:  For any $\epsilon_0 >0$ there is $C_0 = C(\epsilon_0 )>0$ such that 
			\begin{enumerate}[label=(\alph*)]
				\item $f(t)  \leq \epsilon_0 t^{p-1}+ C_0 t^{q-1};$
				\item $f(t)  \leq \epsilon_0 t^{p-1}+ C_0 t^{p^\ast-1};$
				\item $f(t) \leq \epsilon_0 t^{q_0 - 1} + C_0 t^{p ^\ast -1};$
			\end{enumerate}
   			\item[(ii)] Condition \ref{f_brutal} includes nonlinearities $f$ such that $\limsup _{t\rightarrow 0_+} f(t)t^{1-q_0}=0$ and $f(t) \leq C(1 + t ^{q-1}),$ for all $t \geq 0,$ with $p\leq q_0<q<p^\ast$ and $C>0;$
            \item[(iii)] Condition \ref{f_brutal} differs of \cite[Hyphothesis ($f_1$)]{doo2005} because it only allows nonlinearities $f$ with the following behavior near the origin: $\lim _{t\rightarrow 0_+} f(t)t^{1-q_0}=0,$ where $p<q_0<p^\ast,$ when $\inf _{\xi \in \mathbb{R}^N}V(\xi ) = 0.$ This condition appears to be a natural constraint needed to deal with the critical frequency. Similar ones appeared in \cite{lyberopoulos2018,byeon-wang2003};
            \item[(iv)] Hypotheses \ref{f_AR} and \ref{f_porbaixo} imply $\lim _{t \rightarrow +\infty} f(t) t^{1-p} = + \infty;$
            \item[(v)] $V$ may not be a bounded function;
            \item[(vi)] If $V$ is bounded below away from zero, then it is clear that $V$ satisfy \ref{V_autovalor} and \ref{V_brutal}. For a more general nonnegative potential satisfying \ref{V_autovalor}, see Section \ref{s_preli} -- Remark \ref{r_potential};
            \item[(vii)] Under our conditions, it is expected that Problem \eqref{P} does not admit ground states (see \cite{alves-souto2002});
            \item[(viii)] If $f$ is locally Lipschitz and $1<p\leq 2,$ one can follow the same argument made in \cite{doo2005} by applying the regularity results of \cite{damascelli-ramaswamy2001}, to conclude that the global maximum points $z_\varepsilon$ are unique;
\end{description}	
\subsection{Outline} In Section \ref{s_preli} we introduce the variational setting that we use, summing up the main properties of the penalization method. In Section \ref{s_existence} we prove existence, regularity and uniform bounds of solutions of the auxiliary problem with respect to \eqref{P}. Section \ref{s_teodoo} is dedicated to complete the proof of Theorem \ref{teo_doo}. If not stated, we always assume that \ref{V_autovalor}, \ref{f_brutal}, \ref{f_AR} and \ref{f_porbaixo} hold with $0 < \varepsilon < 1.$\\

\noindent \textbf{Notation:} In this paper, we use the following
	notations:
	\begin{itemize}
		\item  The usual norms in $L^{p}(\mathbb{R}^N)$ are denoted by $\|\cdot  \| _p;$
		\item $\| \cdot \| _{1,p}$ is the usual Sobolev norm of $W^{1,p}(\mathbb{R}^N);$
		\item $X_\varepsilon$ is the space $W^{1,p}(\mathbb{R}^N)$ equipped with the norm $\| \cdot  \|_{(\varepsilon)}$ (see Proposition \ref{p_norma});
		\item $B_R(x_0)$ is the $N$-ball of radius $R$ and center $x_0;$ $B_R:=B_R(0);$
		\item  $C_i$ denotes (possible different) any positive constant;
		\item $\mathcal{X}_A$ is the characteristic function (indicator function) of the set $A \subset \mathbb{R}^N;$
		\item $A^c =\mathbb{R}^N \setminus A,$ for $A \subset \mathbb{R}^N;$ 
		\item $u^+= \max\{u,0  \}$ and $u^- = \max\{-u,0  \};$
		\item $|A|$ is the Lebesgue measure of the mensurable set $A \subset \mathbb{R}^N;$
		\item $[|u|\leq \delta] = \{ x \in \mathbb{R}^N:u(x) \leq \delta \}.$ Similarly we denote $[|u|< \delta],$ $[|u|> \delta]$ and $[|u| \geq  \delta];$
	\end{itemize}

	
	\section{Variational setting: Del Pino and Felmer's approach}\label{s_preli}
	Following \cite{delpinofelmer}, we take into account that the change of variables $v(x) = u( \varepsilon x)$ in Problem \eqref{P} leads to the equivalent problem
\begin{equation}\label{P0}\tag{$\hat{P}_\varepsilon$}
\left\{
\begin{aligned}
    &-\Delta_pv+V(\varepsilon x)v^{p-1}=f(v)+v^{p^\ast -1},\ v>0,\  \text{in}\ \mathbb{R}^{N},\\
    &\lim _{|x|\rightarrow \infty }v(x) = 0 ,
\end{aligned}
\right.
\end{equation}
	in the sense that solutions of \eqref{P} give solutions of \eqref{P0} and reciprocally, by taking $u(x) = v(x/\varepsilon)$. Now let us fix a bounded domain $\Omega \subset \mathbb{R}^N$ with $\overline{\mathcal{O}}\subset \Omega $ and $0 \in \Omega.$ We denote
	\begin{equation*}
	V _0 := \inf \left\lbrace V(\varepsilon x) : x \not \in  \Omega _\varepsilon \right\rbrace = \inf \left\lbrace  V(y) : y \not \in \Omega \right\rbrace >0,
	\end{equation*}
	where $\Omega _\varepsilon = \varepsilon^{-1}\Omega.$ In particular,
	\begin{equation}\label{usa}
	V(\varepsilon x) \geq  V_0,\quad \text{for any}\ x \in \Omega _\varepsilon^c = \varepsilon^{-1}\Omega ^c.
	\end{equation}
	On the other hand, since
	\begin{equation*}
	\lim _{t \rightarrow 0} \frac{f(t) + t^{p^\ast -1 }}{t^{p-1}}=0\quad\text{and}\quad\lim _{t \rightarrow \infty }\frac{f(t) + t^{p^\ast -1}}{t^{p-1}} = + \infty,
	\end{equation*}
	one can define, 
	\begin{equation*}
	a_0 := \min\left\lbrace b >0: \frac{f( b ) + b ^{p^\ast -1} }{b ^{p-1}}= \frac{V_0}{\kappa_0}  \right\rbrace,
	\end{equation*}
	where $\kappa_0 >1.$ Furthermore, we consider a continuous function
	\begin{equation*}
	f_\ast  (t) = 
	\left\{  
	\begin{aligned}
	&0, &&\text{ if }t \leq 0,\\
	&f(t) + t^{p^\ast -1}, && \text{ if } 0 \leq t \leq a_0,\\
	&\frac{V_0}{\kappa_0} t ^{p-1}, &&\text{ if }t \geq a _0,
	\end{aligned}
	\right.
	\end{equation*}
	and the following auxiliary function is defined
	\begin{equation*}
	g(x,t) = 
	\left\{
	\begin{aligned}
	&\mathcal{X}_\Omega (x) ( f(t) + t^{p^\ast -1}) + (1 - \mathcal{X}_\Omega (x) ) f_\ast (t),&&\text{ if }t \geq 0,\ x \in \mathbb{R}^N .\\
	&0,&&\text{ if }t \leq 0,\ x \in \mathbb{R}^N .
	\end{aligned}
	\right.
	\end{equation*}
	\begin{proposition}\label{p_ge}
		The function $g$ is a Carath\'{e}odory function and satisfy:
		\begin{enumerate}[label=(\roman*)]
			\item There is $c_f>0$ such that $g(x,t) \leq  c_f\left( f(t) + t^{p^\ast -1} \right),$ for all $x \in \mathbb{R}^N$ and $t \geq 0$ ($c_f=1$ when $f$ satisfies \ref{f_increasing});
			\item $\displaystyle \mu G(x,t) := \mu \int _0 ^t g(x, \tau ) \dtau  \leq g(x,t) t,$ for all $x \in \Omega$ or $0 \leq t \leq a_0;$
			\item $g(x,t) \leq  (V _0 / \kappa _0)t^{p-1},$ for all $x \not \in \Omega $ and $t \geq 0;$
			\item $\mathcal{G}(x,t) := g(x,t)t -p G(x,t) \geq 0$ for all $x \in \mathbb{R}^N$ and $t \geq 0;$
			\item If \ref{f_increasing} holds then $t\mapsto g(x,t)/t^{p-1}$ is nondecreasing for all $x \in \mathbb{R}^N$ and $t > 0;$
		\end{enumerate}
	\end{proposition}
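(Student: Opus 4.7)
The plan is to verify each item by a case analysis on whether $x\in\Omega$ (in which case $g$ reduces to the critical nonlinearity $f(t)+t^{p^{\ast}-1}$) and on whether $t\leq a_0$ or $t\geq a_0$ (where $f_\ast$ switches form). Before any case work, I would record that $f_\ast$ is continuous at $t=a_0$ since by the defining equation of $a_0$ the two pieces $f(t)+t^{p^{\ast}-1}$ and $(V_0/\kappa_0)t^{p-1}$ agree there; hence $t\mapsto g(x,t)$ is continuous, while $x\mapsto g(x,t)$ is measurable as a linear combination of $\mathcal{X}_\Omega$ with continuous profiles in $t$. Thus $g$ is Carath\'{e}odory.

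For \textbf{(i)}, the bound is trivial with constant $1$ on $\Omega$ and for $0\leq t\leq a_0$ off $\Omega$. On the outer regime $t\geq a_0$ off $\Omega$, I would control $(V_0/\kappa_0)t^{p-1}$ by $(V_0/\kappa_0)a_0^{p-p^{\ast}}t^{p^{\ast}-1}$ using $(t/a_0)^{p^{\ast}-p}\geq 1$, extracting the explicit $c_f=V_0/(\kappa_0\,a_0^{p^{\ast}-p})\geq 1$ (the last inequality coming from $a_0^{p^{\ast}-p}\leq V_0/\kappa_0$, a direct consequence of the defining equation for $a_0$). Under \ref{f_increasing}, the ratio $(f(t)+t^{p^{\ast}-1})/t^{p-1}$ is nondecreasing and equals $V_0/\kappa_0$ at $a_0$, so for $t\geq a_0$ it dominates $V_0/\kappa_0=f_\ast(t)/t^{p-1}$, giving $c_f=1$.

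For \textbf{(ii)} and for the portions of \textbf{(iv)} with $x\in\Omega$ or $0\leq t\leq a_0$, the function $g(x,\cdot)$ is literally $f(t)+t^{p^{\ast}-1}$; I would combine \ref{f_AR} with $\mu\leq p^{\ast}$ applied to the critical term. For \textbf{(iii)}, only $0\leq t\leq a_0$ off $\Omega$ is non-trivial: using continuity of $h(t):=(f(t)+t^{p^{\ast}-1})/t^{p-1}$ on $(0,\infty)$ together with $h(t)\to 0$ as $t\to 0^+$ (which follows from \ref{f_brutal}), if $h$ strictly exceeded $V_0/\kappa_0$ somewhere in $(0,a_0)$, the intermediate value theorem would produce some $b\in(0,a_0)$ with $h(b)=V_0/\kappa_0$, contradicting the minimality defining $a_0$. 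The one remaining case of \textbf{(iv)} is $x\notin\Omega$ with $t\geq a_0$; here I would compute
\begin{equation*}
G(x,t)=F(a_0)+\frac{a_0^{p^{\ast}}}{p^{\ast}}+\frac{V_0}{\kappa_0}\,\frac{t^p-a_0^p}{p},
\end{equation*}
so that the $t$-dependent terms cancel against $g(x,t)t=(V_0/\kappa_0)t^p$, leaving the $t$-independent quantity
\begin{equation*}
\mathcal{G}(x,t)=\frac{V_0}{\kappa_0}a_0^p-pF(a_0)-\frac{p}{p^{\ast}}a_0^{p^{\ast}}.
\end{equation*}
Substituting $(V_0/\kappa_0)a_0^p=a_0 f(a_0)+a_0^{p^{\ast}}$ from the defining equation for $a_0$ rewrites this as $\bigl(a_0 f(a_0)-pF(a_0)\bigr)+(1-p/p^{\ast})a_0^{p^{\ast}}$, nonnegative by \ref{f_AR} (which gives $a_0 f(a_0)\geq\mu F(a_0)\geq pF(a_0)$ since $F\geq 0$ and $\mu>p$) and $\mu\leq p^{\ast}$.

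Item \textbf{(v)} is then immediate: on $\Omega$ and for $0<t\leq a_0$ off $\Omega$, $g(x,t)/t^{p-1}=(f(t)+t^{p^{\ast}-1})/t^{p-1}$ is increasing by \ref{f_increasing}; for $t\geq a_0$ off $\Omega$ it is the constant $V_0/\kappa_0$; and the two match at $t=a_0$. The main obstacle I anticipate is the outer regime of \textbf{(iv)}: the cancellations produce a $t$-independent constant that must be reconciled with the AR condition not by pulling from the inner regime but by unwinding the defining equation of $a_0$; once this identity is spotted, the rest of the proposition reduces to routine case comparisons.
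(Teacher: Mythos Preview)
Your proof is correct. The paper states Proposition~\ref{p_ge} without proof, treating it as a routine consequence of the definitions of $a_0$, $f_\ast$ and $g$; your case-by-case verification is exactly the natural argument, and in particular your handling of the outer regime in (iv)---reducing $\mathcal{G}(x,t)$ to the $t$-independent constant and then rewriting it via the defining equation of $a_0$---is the key computation.
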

	The approach due to del Pino-Felmer \cite{delpinofelmer} consists on the study of the following \textit{auxiliary problem}
\begin{equation}\label{PF}\tag{$PF_\varepsilon$}
\left\{
\begin{aligned}
    &-\Delta_pv+V(\varepsilon x)v^{p-1}=g(\varepsilon x,v),\ v>0,\  \text{in}\ \mathbb{R}^{N},\\
    &\lim _{|x|\rightarrow \infty }v(x) = 0.
\end{aligned}
\right.
\end{equation}

	If $v_\varepsilon$ is a solution of \eqref{PF} such that $v_\varepsilon(x) \leq a_0,$ for all $x \not \in \Omega_\varepsilon,$ then $v_\varepsilon$ is a solution of \eqref{P0}. Following we define $X_\varepsilon$ as the completion of $C^\infty _0 (\mathbb{R}^N)$ with respect to the norm
	\begin{equation*}
	\| v \| _{(\varepsilon)} := \left[  \int _{\mathbb{R}^N} | \nabla v |^p  +    V(\varepsilon x) |v|^p \dx   \right]^{1/p},\quad v \in C^\infty _0 (\mathbb{R}^N).
	\end{equation*}
	$X_\varepsilon$ is the space of functions to look for solutions of \eqref{PF}.
	\begin{proposition}\label{p_norma}
		If $0 \leq  \varepsilon \leq 1$ and \ref{V_autovalor} hold, then $(X_\varepsilon,\| \, \cdot \,  \| _{(\varepsilon)} )$ is a well defined Banach space. Furthermore, $X_\varepsilon$ is continuoulsy embedded in $W^{1,p}(\mathbb{R}^N)$ and
		\begin{equation*}
		X_\varepsilon = \left\lbrace  v \in W^{1,p}(\mathbb{R}^N) : \int _{\mathbb{R}^N }V(\varepsilon x) |v|^p \dx <+ \infty   \right\rbrace.
		\end{equation*}
	\end{proposition}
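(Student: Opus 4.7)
The plan is to verify (a) that $\|\cdot\|_{(\varepsilon)}$ is a genuine norm on $C_0^\infty(\mathbb{R}^N)$, so that $X_\varepsilon$ is well-defined as a Banach space (by construction, as the completion); (b) the continuous embedding $X_\varepsilon \hookrightarrow W^{1,p}(\mathbb{R}^N)$; and (c) the set-theoretic identification. The key input for (a) and (b) is a rescaled form of \ref{V_autovalor}. Given $v \in C_0^\infty(\mathbb{R}^N)$, set $w(y) = v(y/\varepsilon)$, which remains in $C_0^\infty$; applying \ref{V_autovalor} to $w$ and performing the change of variables $y = \varepsilon x$ transforms the hypothesis into
\begin{equation*}
\varepsilon^{-p}\int_{\mathbb{R}^N}|\nabla v|^p\dx + \int_{\mathbb{R}^N}V(\varepsilon x)|v|^p\dx \geq d_1\int_{\mathbb{R}^N}|v|^p\dx.
\end{equation*}
Since $0 < \varepsilon \leq 1$ forces $\varepsilon^{-p} \geq 1$, this yields the Poincaré-type estimate $\|v\|_p^p \leq \varepsilon^{-p} d_1^{-1}\|v\|_{(\varepsilon)}^p$ valid on $C_0^\infty$.

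For (a) the homogeneity of $\|\cdot\|_{(\varepsilon)}$ is trivial, positive definiteness follows from the Poincaré estimate (or more directly, $\|\nabla v\|_p = 0$ forces a compactly supported smooth $v$ to vanish), and the triangle inequality is obtained by viewing $v \mapsto (\nabla v, V(\varepsilon\cdot)^{1/p}v)$ as an $L^p(\mathbb{R}^N;\mathbb{R}^{N+1})$-valued map and invoking Minkowski. For (b) the bound $\|\nabla v\|_p \leq \|v\|_{(\varepsilon)}$ combined with the Poincaré estimate gives $\|v\|_{1,p} \leq C(\varepsilon)\|v\|_{(\varepsilon)}$ on $C_0^\infty$, so by density the identity extends to a continuous linear injection $X_\varepsilon \hookrightarrow W^{1,p}(\mathbb{R}^N)$. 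The weighted integral $v \mapsto \int V(\varepsilon x)|v|^p\dx$ also extends from $C_0^\infty$ to $X_\varepsilon$: any $\|\cdot\|_{(\varepsilon)}$-Cauchy sequence in $C_0^\infty$ admits a pointwise a.e.-convergent subsequence via its $L^p$ convergence, and Fatou then forces the $W^{1,p}$-limit $v$ to satisfy $\int V(\varepsilon x)|v|^p\dx < +\infty$. This yields the inclusion $X_\varepsilon \subseteq \{v \in W^{1,p}(\mathbb{R}^N) : \int V(\varepsilon x)|v|^p\dx < +\infty\}$.

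For (c) it remains to prove the reverse inclusion, i.e., that any $v \in W^{1,p}(\mathbb{R}^N)$ with $\int V(\varepsilon x)|v|^p\dx < +\infty$ is the $\|\cdot\|_{(\varepsilon)}$-limit of a sequence in $C_0^\infty$. I would argue in two standard stages: first, truncate by a smooth cutoff $\chi_n \in C_0^\infty(\mathbb{R}^N)$ with $\chi_n \equiv 1$ on $B_n$, $\chi_n \equiv 0$ outside $B_{2n}$ and $|\nabla\chi_n| \leq 2/n$, where dominated convergence applied to $|(1-\chi_n)\nabla v - v\nabla\chi_n|^p$ and to $V(\varepsilon x)(1-\chi_n)^p|v|^p$ yields $\chi_n v \to v$ in $\|\cdot\|_{(\varepsilon)}$; second, mollify each $\chi_n v$, which is compactly supported in $W^{1,p}$, so that $\rho_\delta \ast (\chi_n v) \to \chi_n v$ in $W^{1,p}$, and on a fixed compact neighbourhood of $\supp(\chi_n v)$ the function $V(\varepsilon\cdot)$ is bounded (from local Hölder continuity), giving convergence in $\|\cdot\|_{(\varepsilon)}$ as well. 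The main obstacle is precisely this density step: controlling the potentially unbounded weight $V(\varepsilon x)$ under both truncation and mollification, which is resolved by the integrability hypothesis (for the cutoff) and the local continuity of $V$ (for the mollification).
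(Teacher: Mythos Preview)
Your proposal is correct and follows essentially the same route as the paper: both use the rescaling $w(y)=v(y/\varepsilon)$ together with \ref{V_autovalor} to obtain a Poincar\'e-type control of $\|v\|_p$ by $\|v\|_{(\varepsilon)}$, then Fatou for the inclusion $X_\varepsilon\subseteq\{v\in W^{1,p}:\int V(\varepsilon x)|v|^p\dx<\infty\}$, and finally a truncation-plus-mollification density argument for the reverse inclusion. The only cosmetic differences are that the paper performs mollification first (for compactly supported $v$) and truncation second, while you do the opposite order, and your Poincar\'e constant $\varepsilon^{-p}d_1^{-1}$ is slightly sharper than the paper's $(\varepsilon^N d_1)^{-1}$; neither point affects the argument.
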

	\begin{proof}
		We start the proof by observing that \ref{V_autovalor} implies
		\begin{equation*}
		d_\varepsilon := \inf \left\lbrace    \| v \| ^p _{(\varepsilon) }:  v\in C_0 ^\infty  (\mathbb{R}^N)\text{ and } \|v \|_p =\varepsilon ^{-N/p} \right\rbrace \geq d_1 >0,\quad\text{for }0 < \varepsilon \leq 1.
		\end{equation*}
		In fact, by using the change of variable $x \mapsto \varepsilon x,$ we see that
		\begin{align*}
		\int _{\mathbb{R}^N} | \nabla v |^p  +    V(\varepsilon x) |v|^p \dx &= \varepsilon ^{p - N} \int _{\mathbb{R}^N}  |\nabla v_\varepsilon | ^p  \dx + \varepsilon ^{-N} \int _{\mathbb{R}^N} V(x)  |v _\varepsilon |^p   \dx,\\
		\int _{\mathbb{R}^N}|v|^p \dx &=\varepsilon ^{-N}\int _{\mathbb{R}^N}  |v _\varepsilon |^p  \dx.
		\end{align*}
		where $v_\varepsilon (x)= v (x/\varepsilon)$ and $v \in C_0 ^\infty (\mathbb{R}^N).$ In particular, if $\| v \| _p = \varepsilon ^{-N/p},$ then $\| v _\varepsilon \| _p =1$ and $\| v \|_{(\varepsilon)}^p \geq \| v_\varepsilon \|_{(1)}^p \geq d_1,$ provided $ 0 < \varepsilon \leq 1.$ Therefore $d_\varepsilon \geq d_1.$ Now let $w_\varepsilon = v \| v \|_p^{-1} \varepsilon ^{-N/p},$ $v \in C_0 ^\infty(\mathbb{R}^N)\setminus \{ 0 \}.$ Then $\| w_\varepsilon \| _p  = \varepsilon ^{-N/p}$ and so
		\begin{equation*}
		\int _{\mathbb{R}^N } |v|^p\dx \leq \frac{1}{\varepsilon ^N d_\varepsilon } \int _{\mathbb{R}^N }|\nabla v|^p + V(\varepsilon x) | v | ^p \dx.
		\end{equation*}
		Consequently, 
		\begin{equation*}
		\| v \| _{1,p} \leq \left( 1+\frac{1}{\varepsilon ^N d_1} \right)^{1/p} \| v \|_{(\varepsilon)},\quad \forall \, v \in C_0 ^\infty(\mathbb{R}^N).
		\end{equation*}
		In particular, for any $(\varphi_k)_k \subset C^\infty _0 (\mathbb{R}^N),$ $\| \varphi _k - \varphi _l  \|_{(\varepsilon)} \rightarrow 0$ implies $\| \varphi _k - \varphi _l  \|_{1,p} \rightarrow 0$ as $k,l \rightarrow \infty.$ Hence $X_\varepsilon$ is a well defined space of functions and by an application of Fatou's lemma,
		\begin{equation}\label{inclusao_espaco}
		X_\varepsilon \subset \left\lbrace  v \in W^{1,p}(\mathbb{R}^N) : \int _{\mathbb{R}^N }V(\varepsilon x) |v|^p \dx < + \infty   \right\rbrace .	
		\end{equation}
		We have to prove  the converse inclusion of \eqref{inclusao_espaco}. Let $v \in W^{1,p} (\mathbb{R}^N)$ such that $\int_{\mathbb{R}^N} V(\varepsilon x)|v|^p\dx < \infty.$ Assume first that $v$ has compact support. Let $(\varrho_k)$ the standard sequence of mollifiers and define $v_k = \varrho_k \ast v \in C^\infty_0(\mathbb{R}^N).$ By Friedrichs theorem we have $v_k \rightarrow v$ and $\nabla v_k \rightarrow \nabla v$ in $L^p(\mathbb{R}^N).$ Since $K:=\supp(v) \cup \overline{B_1 }$ is a compact set,
		\begin{equation*}
		\int_{\mathbb{R}^N}V(\varepsilon x)|v_k -v|^p \dx\leq \|V(\varepsilon x)\|_{L^\infty(K)}\int_{\mathbb{R}^N} |v_k -v|^p\dx \rightarrow 0, \text{ when }k \rightarrow \infty.
		\end{equation*}
		Thus, in that case, $v \in X_\varepsilon.$ For the general case, we consider a truncation function $\xi \in C_0 ^\infty (\mathbb{R}^N)$ such that $\xi= 1$ in $B_1,$ $\xi= 0$ on $ B^c_2$ and $\| \nabla \xi\|_\infty \leq M.$ Now we define $\xi _k (x) = \xi (x/k)$ and take $v_k = \xi_k v \in W^{1,p} (\mathbb{R}^N),$ which has compact support. In particular, as we saw, $v_k \in X_\varepsilon.$ It is easy to see, using Lebesgue's dominated convergence theorem, that $\|v_k - v\|_{(\varepsilon)} \rightarrow 0.$ Since $X_\varepsilon$ is a closed space, we have $v\in X_\varepsilon.$
	\end{proof}
\begin{remark}\label{r_potential}
Consider $V-\delta_0 \in L^s([V<\delta _0]),$ $s>N/p$, where $\delta_0 >0$ is such that
\begin{equation*}
\| V(x) - \delta_0 \| _{L^s ([V<\delta_0 ])} < 2^{-1}(\min\{ 1,\delta_0  \})C_{p,s},
\end{equation*}
with
\begin{equation*}
0<C_{p,s} = \inf \left\lbrace  \| u \|^p _{1,p}\| u \|^{-p} _{ps'} : u \in C^\infty _0 (\mathbb{R}^N) \right\rbrace <+\infty,
\end{equation*}
and $s' = s/(s-1).$ Then $V$ satisfies \ref{V_autovalor}. In fact, for $u \in C^\infty _0(\mathbb{R}^N)$ such that $\| u \|_p =1,$ we have
\begin{equation*}
\| u \|^p_{(1)} \geq \min\{ 1,\delta_0  \} \| u \|^p_{1,p} +\int_{[V<\delta_0 ]} (V(x) - \delta_0) |u|^p \dx .
\end{equation*}
On the other hand, by H\"{o}lder's inequality,
\begin{equation*}
\int _{[V<\delta_0 ]} (\delta_0-V(x))|u|^p \dx \leq \| V(x) - \delta_0 \| _{L^s ([V<\delta_0 ])} \| u \|^p_{L^{ps'}([V<\delta_0 ])}\leq 2^{-1}\min\{ 1,\delta_0  \} \| u \|^p_{1,p}.
\end{equation*}
Consequently,
\begin{equation*}
\| u \|^p_{(1)} \geq 2^{-1}\min\{ 1,\delta_0  \} \| u \|^p_{1,p} \geq 2^{-1}\min\{ 1,\delta_0  \},
\end{equation*}
which leads to $d_1\geq 2^{-1}\min\{ 1,\delta_0  \} >0.$
\end{remark}
	Related to Problem \eqref{PF}, we consider the following functional $I_\varepsilon : X_\varepsilon \rightarrow \mathbb{R}$ given by
	\begin{equation*}
	I_\varepsilon (v) = \frac{1}{p} \| v \|_{(\varepsilon)}^p - \int _{\mathbb{R}^N } G(\varepsilon x,v)\dx.
	\end{equation*}
	It is easy to see that $I_\varepsilon \in C^1 (X_\varepsilon : \mathbb{R})$ and critical points of $I_\varepsilon$ are weak solution of \eqref{PF}, in the sense that
	\begin{equation*}
	\int _{\mathbb{R}^N}|\nabla v _\varepsilon| ^{p-2}\nabla v _\varepsilon\cdot \nabla \varphi\dx +\int _{\mathbb{R}^N} V( \varepsilon  x)|v_\varepsilon| ^{p-2}v_\varepsilon \varphi \dx = \int _{\mathbb{R}^N}g(\varepsilon x , v_\varepsilon) \varphi\dx,\ \forall \, \varphi \in W^{1,p} (\mathbb{R}^N).
	\end{equation*}
	\section{Existence and regularity of solutions for the auxiliary problem}\label{s_existence}
	In this section we prove that under our assumption $I_\varepsilon$ has a solution at the Mountain Pass level
	\begin{equation*}
	c(I_\varepsilon) := \inf _{\gamma \in \Gamma_{I \varepsilon}} \sup _{t \geq 0} I_\varepsilon(\gamma (t)),
	\end{equation*}
	where 
	\begin{equation}\label{gamma}
	\Gamma_{I_\varepsilon} = \left\{   \gamma \in C([0,\infty), X _\varepsilon): \gamma(0)=0 \mbox{ and } \lim_{t \rightarrow \infty} I_\varepsilon(\gamma (t)) = - \infty   \right\}.
	\end{equation}
	\begin{lemma}\label{l_geometry}
		$I_\varepsilon$ has the Mountain Pass geometry. More precisely:
		\begin{enumerate}[label=(\roman*)]
			\item There exist $r,b>0$ such that $I_\varepsilon(v) \geq b,$ whenever $\|v\|_{(\varepsilon)} = r; $
			\item There is $e \in X_\varepsilon$ with $\|e\| _{(\varepsilon)}> r$ and $I_\varepsilon(e)<0.$ 
		\end{enumerate}
		In addition $0<c(I_\varepsilon)\leq \max _{t \geq 0} I_\varepsilon (tv),$ for any $ v \in X_\varepsilon$ such that $\| v ^+\|_{L^{p ^\ast } (\Omega _\varepsilon) }>0.$ Moreover, if $v=v_\varepsilon$ is a nontrivial critical point of $I_\varepsilon$ and \ref{f_increasing} holds, then $\max _{t \geq 0}I_\varepsilon (t v_\varepsilon) = I_\varepsilon (v_\varepsilon).$
	\end{lemma}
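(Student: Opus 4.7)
The plan is to prove the four assertions in order, relying on the growth bounds collected in Remark 1.1, on the comparison $g(x,t)\le c_f(f(t)+t^{p^\ast-1})$ and monotonicity from Proposition \ref{p_ge}, and on the embedding $X_\varepsilon\hookrightarrow W^{1,p}(\mathbb R^N)$ from Proposition \ref{p_norma}.

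For (i), I would combine Proposition \ref{p_ge}(i) with Remark \ref{r_growth}(i)(a) to obtain
$G(\varepsilon x,v)\le c_f\bigl(\tfrac{\epsilon_0}{p}|v|^p+\tfrac{C_0}{q}|v|^q+\tfrac{1}{p^\ast}|v|^{p^\ast}\bigr)$, and then bound $\int G(\varepsilon x,v)\dx$ using $\|v\|_{p}\le C_\varepsilon\|v\|_{(\varepsilon)}$, $\|v\|_q\le C'_\varepsilon\|v\|_{(\varepsilon)}$ (both from Proposition \ref{p_norma} plus Sobolev) and the $\varepsilon$-free estimate $\|v\|_{p^\ast}\le C\|\nabla v\|_p\le C\|v\|_{(\varepsilon)}$. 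Choosing $\epsilon_0$ small to absorb the $\|v\|_{(\varepsilon)}^p$ term, one gets
$I_\varepsilon(v)\ge \tfrac{1}{2p}\|v\|_{(\varepsilon)}^p - C_1\|v\|_{(\varepsilon)}^q - C_2\|v\|_{(\varepsilon)}^{p^\ast}$,
and since $q,p^\ast>p$ there is some $r>0$ giving the lower bound $b>0$.

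For (ii) and the upper bound on $c(I_\varepsilon)$, the key observation is that for any $v\in X_\varepsilon$ with $\|v^+\|_{L^{p^\ast}(\Omega_\varepsilon)}>0$, on $\Omega_\varepsilon$ the definition of $g$ gives $G(\varepsilon x,tv)\ge (tv^+)^{p^\ast}/p^\ast$ pointwise. Hence
$I_\varepsilon(tv)\le \tfrac{t^p}{p}\|v\|_{(\varepsilon)}^p - \tfrac{t^{p^\ast}}{p^\ast}\|v^+\|_{L^{p^\ast}(\Omega_\varepsilon)}^{p^\ast}\longrightarrow -\infty$
as $t\to\infty$, so $\gamma(t)=tv$ lies in $\Gamma_{I_\varepsilon}$. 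In particular, taking $v=e:=Tv_0$ with $T$ large produces (ii), and the upper bound $c(I_\varepsilon)\le \max_{t\ge 0}I_\varepsilon(tv)$ follows by evaluating along this path. The lower bound $c(I_\varepsilon)\ge b>0$ comes from (i): every admissible path starts at $\gamma(0)=0$ and eventually has $I_\varepsilon(\gamma(t))<0$, so by continuity it crosses the sphere $\{\|\cdot\|_{(\varepsilon)}=r\}$ at some $t_0$, giving $\sup_t I_\varepsilon(\gamma(t))\ge I_\varepsilon(\gamma(t_0))\ge b$.

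For the last assertion, assume \ref{f_increasing} and let $v_\varepsilon$ be a nontrivial critical point. I would set $\phi(t)=I_\varepsilon(tv_\varepsilon)$ and study
\[
\psi(t):=\frac{\phi'(t)}{t^{p-1}}=\|v_\varepsilon\|_{(\varepsilon)}^p-\int_{\{v_\varepsilon>0\}}\frac{g(\varepsilon x,tv_\varepsilon)}{(tv_\varepsilon)^{p-1}}\,v_\varepsilon^p\dx,
\]
using Proposition \ref{p_ge}(v) to ensure that the integrand is nondecreasing in $t$, and hence $\psi$ is nonincreasing. Since $I'_\varepsilon(v_\varepsilon)v_\varepsilon=0$ forces $\psi(1)=0$, one obtains $\phi'(t)\ge 0$ on $(0,1]$ and $\phi'(t)\le 0$ on $[1,\infty)$; therefore $\max_{t\ge 0}\phi(t)=\phi(1)=I_\varepsilon(v_\varepsilon)$. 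The main subtlety (and the place where care is needed) is that the Nehari argument requires strict monotonicity of $t\mapsto g(\varepsilon x,t)/t^{p-1}$ somewhere on $\{v_\varepsilon>0\}$; this is guaranteed by the critical term $t^{p^\ast-1}$ in the definition of $g$ on $\Omega$ (where $t^{p^\ast-1}/t^{p-1}=t^{p^\ast-p}$ is strictly increasing), provided $v_\varepsilon$ is not identically zero on $\Omega_\varepsilon$, a fact which itself should be verified from the critical point equation for $v_\varepsilon$ (otherwise $v_\varepsilon$ would satisfy an equation with subcritical right-hand side controlled by $V_0/\kappa_0$, contradicting nontriviality via testing with $v_\varepsilon$ and using $\kappa_0>1$).
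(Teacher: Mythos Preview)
Your proof is correct and follows essentially the same route as the paper: the same growth estimate for (i), the same use of the critical term on $\Omega_\varepsilon$ for (ii) and the minimax bound, and the same monotonicity argument via Proposition~\ref{p_ge}(v) for the final assertion. One remark: the ``main subtlety'' you raise at the end is not actually needed. The statement only asks that $\max_{t\ge 0}I_\varepsilon(tv_\varepsilon)=I_\varepsilon(v_\varepsilon)$, not that $t=1$ is the \emph{unique} maximizer; for this it suffices that $\psi$ is nonincreasing with $\psi(1)=0$, which already follows from the nondecreasing property in Proposition~\ref{p_ge}(v) without any strictness, and the paper proceeds exactly this way.
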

	\begin{proof}
		Clearly \ref{f_brutal} and Sobolev inequalities imply
		\begin{equation*}
		I_\varepsilon(v) \geq \| v \| _{(\varepsilon)} ^p \left(  \frac{1}{p}- \epsilon_0C_1 \| v \|_{(\varepsilon)}^{q_0 - p} -C_2(\epsilon_0)\| v \| _{(\varepsilon)}^{q - p}- C_{3} \| v \| _{(\varepsilon)} ^{p^\ast - p }   \right)>0,
		\end{equation*}
		for suitable $\epsilon_0>0$ and $\| v \|_{(\varepsilon)} >0$ small enough. On the other hand, taking $ v \in X_\varepsilon$ such that $\| v^+ \|_{L^{p ^\ast } (\Omega _\varepsilon) }>0,$ we see that
		\begin{equation*}
		I_\varepsilon(t v) \leq \frac{t^p}{p} \| v \|^p _{(\varepsilon)}-\frac{t^{p^\ast}}{p^\ast}\int _{\Omega_\varepsilon} (  v^+ )^{p^\ast}\dx\rightarrow -\infty, \ \text{as }t \rightarrow +\infty.
		\end{equation*}
		To prove the last statement we consider $\varphi (t) = I_\varepsilon (t v_\varepsilon),$ $t \geq 0.$ Since $I_\varepsilon'(v_\varepsilon) = 0,$ the following identity holds
		\begin{equation*}
		\| v_\varepsilon \| _{(\varepsilon)}^p= \int _{\mathbb{R}^N}  g(\varepsilon x, v_\varepsilon) v_\varepsilon\dx.
		\end{equation*}
		Hence
		\begin{equation*}
		\varphi' (t) = \int _{\mathbb{R}^N} t^{p-1} v_\varepsilon^p \left( \frac{g(\varepsilon x , v_\varepsilon)}{v_\varepsilon^{p-1}} - \frac{g(\varepsilon x, t v_\varepsilon)}{(tv_\varepsilon )^{p-1}}  \right) \dx,\ \forall \, t > 0.
		\end{equation*}
		Now we use Proposition \ref{p_ge}--(v) to see that $\varphi' (t) \geq 0,$ whenever $0\leq t \leq 1,$ and $\varphi' (t) \leq 0,$ when $t \geq 1.$ Therefore $t=1$ is maximum point of $\varphi.$
	\end{proof}
	\begin{remark}
		We apply a variant of the mountain pass theorem given by \cite[Theorem 2.1]{elves2009}. Regarding that, following \cite{elves2009} we consider $\hat{c}(I_\varepsilon) = \inf_{\gamma \in \hat{\Gamma} _{I_\varepsilon}} \max_{t \in [0,1]} I_\varepsilon(\gamma (t)),$ where
		\begin{equation*}
		\hat{\Gamma}_{I_\varepsilon} = \left\{   \gamma \in C([0,1], X _\varepsilon): \gamma(0)=0,\ \| \gamma (1) \|_{(\varepsilon)} >r \mbox{ and } I_\varepsilon(\gamma (1)) <0 \right\}.
		\end{equation*}
		Then $\hat{c}(I_\varepsilon) = c(I_\varepsilon).$ In fact, for a given $\gamma \in \Gamma _{I_\varepsilon}$ there is $t _\gamma >0$ such that $I(\gamma (t)) <0$ for any $t\geq t_\gamma$ with $\| \gamma (t_\gamma)\|>r.$ Define $\hat{\gamma}(t) = \gamma (tt_\gamma ),$ for $0 \leq t \leq 1.$ We have $\hat{\gamma} \in \hat{\Gamma}_{I_\varepsilon},$ 
		\begin{equation*}
		\hat{c}(I_\varepsilon) \leq \max _{t \in [0,1]}I _\varepsilon(\hat{\gamma} (t)) = \max _{t  \geq 0}I _\varepsilon(\gamma(t)),
		\end{equation*}
		and so $\hat{c}(I_\varepsilon) \leq c(I_\varepsilon).$ On the other hand, let $\hat{\gamma} \in \hat{\Gamma}_I$ and take $\varphi \in X_\varepsilon$ with $\varphi \geq |\hat{\gamma}(1)|$ in $\Omega _\varepsilon.$ Let $t_0>1$ such that
		\begin{equation*}
		\frac{t^p}{p} \left\| \hat{\gamma}(1)/t_0^2 + (1/t_0 - t_0/t)\varphi      \right\|^p_{(\varepsilon)}  - \frac{t^{p^\ast}}{p^{\ast}} \int _{\Omega _\varepsilon }((\hat{\gamma}(1)/t_0^2 + (1/t_0 - t_0/t)\varphi    )^+)^{p^\ast} \dx <0,
		\end{equation*}
		for $t \geq t^2_0+1.$ We define
		\begin{equation*}
		\gamma (t) =
		\left\{  
		\begin{aligned}
		&\hat{\gamma }(t), &&\text{ if }0\leq t \leq 1,\\
		&\hat{\gamma }(1), && \text{ if } 1\leq t \leq t^2_0,\\
		&(t/t^2_0) \hat{\gamma }(1) +((t/t_0)-t_0) \varphi, && \text{ if } t \geq t^2_0.
		\end{aligned}
		\right.
		\end{equation*}
		Then $\gamma \in \Gamma _{I_\varepsilon},$ $c(I_\varepsilon) \leq \max _{t \geq 0}I _\varepsilon(\gamma (t)) = \max _{t  \geq 0}I _\varepsilon(\hat{\gamma}(t))$ and consequently $\hat{c}(I_\varepsilon) = c(I_\varepsilon).$
	\end{remark}
Next, we have a minimax estimate.
\begin{lemma}\label{l_minimax}
	If either \ref{pi}, \ref{pii} or \ref{piii} hold, then $c(I_\varepsilon) < ( 1/N )\mathbb{S}^{N/p},$ for all $\varepsilon >0,$ where
\begin{equation}\label{sobolev_constant}
\mathbb{S} = \inf  \left\lbrace   \|   \nabla  u \|_p ^p   : u \in D^{1,p}(\mathbb{R}^N ) \text{ and }\| u \| _{p^\ast }=1 \right\rbrace .
\end{equation}
\end{lemma}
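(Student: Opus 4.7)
The plan is to follow the classical Brezis--Nirenberg strategy adapted to the $p$-Laplacian and to the penalized functional $I_\varepsilon$: I would test the minimax level $c(I_\varepsilon)$ against a one-parameter family of truncated Aubin--Talenti extremals, compactly supported inside $\Omega_\varepsilon$ so that along the family the penalized nonlinearity $g(\varepsilon\,\cdot\,,\,\cdot\,)$ reduces to $f(\cdot)+(\cdot)^{p^*-1}$, and then exploit \ref{f_porbaixo} to gain a strictly negative subcritical correction below the Sobolev threshold. Concretely, I would fix any $z\in\Omega$ and $\rho>0$ with $\overline{B_{2\rho}(z)}\subset\Omega$, pick a cutoff $\phi\in C_c^\infty(B_{2\rho}(z))$ with $\phi\equiv 1$ on $B_\rho(z)$, and set $u_\eta(y)=\phi(\varepsilon y)\,U_\eta(y-z/\varepsilon)$, with $U_\eta(x)=\eta^{-(N-p)/p}U(x/\eta)$ and $U(x)=c_N(1+|x|^{p/(p-1)})^{-(N-p)/p}$ the Aubin--Talenti profile realizing the constant in \eqref{sobolev_constant}. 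Since $\supp u_\eta\subset\Omega_\varepsilon$, Proposition~\ref{p_ge} gives $G(\varepsilon y,u_\eta)=F(u_\eta)+\tfrac{1}{p^*}u_\eta^{p^*}$ on $\supp u_\eta$.

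Next, I would invoke the standard $p$-Laplacian asymptotic expansions for truncated Talenti functions (as in Garc\'{\i}a Azorero--Peral and used in \cite{doo2005,lyberopoulos2018}):
\begin{equation*}
\int_{\mathbb{R}^N}|\nabla u_\eta|^p\dx=\mathbb{S}^{N/p}+O\bigl(\eta^{(N-p)/(p-1)}\bigr),\qquad \int_{\mathbb{R}^N}u_\eta^{p^*}\dx=\mathbb{S}^{N/p}+O\bigl(\eta^{N/(p-1)}\bigr),
\end{equation*}
together with the upper bound $\int V(\varepsilon y)u_\eta^p\dy=O(\Theta_p(\eta))$ (where $\Theta_p(\eta)=\eta^p$ if $N>p^2$, $\eta^p|\log\eta|$ if $N=p^2$, and $\eta^{(N-p)/(p-1)}$ if $p<N<p^2$) and the lower bound $\int u_\eta^{p_0}\dy\ge c\,\Xi_{p_0}(\eta)$ for an explicit rate $\Xi_{p_0}$ determined by $N,p,p_0$. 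Using \ref{f_porbaixo} I would then bound $I_\varepsilon(tu_\eta)$ pointwise from above by
\begin{equation*}
\psi(t):=\frac{t^p}{p}\Bigl(\int|\nabla u_\eta|^p\dx+\int V(\varepsilon y)u_\eta^p\dy\Bigr)-\lambda t^{p_0}\int u_\eta^{p_0}\dy-\frac{t^{p^*}}{p^*}\int u_\eta^{p^*}\dx,
\end{equation*}
and a first-order-in-$\lambda$ expansion of $\max_{t\ge 0}\psi(t)$ (using that the maximizer of the $\lambda$-free part stays in a compact subinterval $[c_0,C_0]\subset(0,\infty)$ uniformly in small $\eta$) would yield
\begin{equation*}
\max_{t\ge 0}I_\varepsilon(tu_\eta)\le\tfrac{1}{N}\mathbb{S}^{N/p}+C_1\Theta_p(\eta)+O\bigl(\eta^{(N-p)/(p-1)}\bigr)-c_0\lambda\,\Xi_{p_0}(\eta).
\end{equation*}

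The hard part will be the algebraic bookkeeping that, case by case, shows the negative subcritical correction dominates both the $V$-contribution and the Sobolev remainder. Under \ref{pi} ($N\ge p^2$) the condition $p_0>p$ alone forces $\Theta_p(\eta)=o(\Xi_{p_0}(\eta))$ as $\eta\to 0^+$, so for any fixed $\lambda>0$ and $\eta$ small enough the bound is strict. The main obstacle is the low-dimensional regime $p<N<p^2$, where the $L^p$-mass of the Talenti profile decays only as $\eta^{(N-p)/(p-1)}$ rather than $\eta^p$ and can compete with the subcritical gain: under \ref{pii}, the sharpened threshold $p_0>p^*-p/(p+1)-1$ is calibrated precisely to guarantee $\Theta_p(\eta)=o(\Xi_{p_0}(\eta))$ in this range, whereas under \ref{piii} the two rates may coincide and one must enlarge $\lambda$ so that $c_0\lambda\,\Xi_{p_0}(\eta)-C_1\Theta_p(\eta)<0$ for small $\eta$. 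Once the bound is established, extending $t\mapsto tu_\eta$ to an admissible path in $\Gamma_{I_\varepsilon}$ via Lemma \ref{l_geometry} yields $c(I_\varepsilon)\le\max_{t\ge 0}I_\varepsilon(tu_\eta)<\tfrac{1}{N}\mathbb{S}^{N/p}$, as required.
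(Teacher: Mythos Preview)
Your proposal is correct and follows essentially the same approach as the paper's proof: both test $c(I_\varepsilon)$ along rays $t\mapsto t\omega$ through truncated Aubin--Talenti extremals supported in $\Omega_\varepsilon$ (so that $g$ reduces to $f+(\cdot)^{p^*-1}$), invoke the standard Guedda--V\'eron/Garc\'{\i}a Azorero--Peral asymptotics for $\|\nabla\omega\|_p^p$, $\|\omega\|_p^p$ and $\|\omega\|_{p_0}^{p_0}$, bound $I_\varepsilon(t\omega)$ above via \ref{f_porbaixo} by a three-term function in $t$, and then carry out the same case-by-case comparison of rates under \ref{pi}--\ref{piii}. The only cosmetic differences are that the paper normalizes the test function so that $\|\omega_\sigma\|_{p^*}=1$ (making the critical term exactly $t^{p^*}/p^*$), centers at $0\in\Omega$ with a fixed $\varepsilon$-independent cutoff radius, and parametrizes the concentration by $\sigma$ rather than $\eta$; these do not affect the argument.
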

\begin{proof}
	Consider $\varrho _0>0$ in such way that $B_{\varrho _0} \subset \Omega.$ Now take $\psi \in C_0 ^\infty(\mathbb{R}^N),$ with $0\leq \psi \leq 1 $ and such that $\psi(x) =1,$ if $|x| \leq  \varrho_0/2,$  and $\psi(x) =0,$ if $|x| \geq \varrho_0.$ For 
	\begin{equation*}
	W_\sigma (x) = (\sigma + |x|^{p/(p-1)}) ^{(p-N)/p}, \text{ denote } \nu  _\sigma := \psi W _\sigma \text{ and } \omega _\sigma =   \nu  _\sigma  / \|  \nu  _\sigma  \|_{p ^\ast}.
	\end{equation*}
	Using the fact that $\mathbb{S} $ is attained by the following functions \cite{talenti},
	\begin{equation*}
	U _\sigma (x) = C(N,p) \frac{\sigma ^{ (N-p)/ p^2 }}{   \left( \sigma + |x|^{p/(p-1)}  \right) ^{(N-p) / p}  }    ,\ \sigma >0, \text{ where }C(N,p) = \left[  N \left( \frac{N-p}{p-1}  \right) ^{p-1}   \right]^{(N-p) / p^2},
	\end{equation*}
	one can prove the following asymptotic behavior of $\omega_\sigma$ for small values of  $\sigma\in (0,1)$ (see \cite[Lemma 3.2]{guedda-veron1989} and \cite{lyberopoulos2018}):
	\begin{align}
	\| \nabla \omega  _\sigma \| _p ^p &= \mathbb{S} + \mathcal{O}(\sigma ^{(N-p) / p}),\label{ozao1}\\
	\|\omega  _\sigma  \| _s ^s
	&=\left\{
	\begin{array}{ll}
	\mathcal{O} (\sigma ^{N (1-s/p^\ast)(p-1)/ p}  ) , &\text{ if }s>p^\ast (1-p^{-1}),\\
	\mathcal{O}(\sigma ^{s (N-p) /p^2} |\ln \sigma|), &\text{ if }s = p^\ast (1-p^{-1}),\\
	\mathcal{O}(\sigma ^{s(N-p)/p^2 }), &\text{ if }s < p^\ast (1-p^{-1}),
	\end{array}
	\right.\\
	\|\omega  _\sigma \| _p ^p & \leq \left\{
	\begin{array}{ll}
	\mathcal{O} (\sigma ^{ p-1}  ) , &\text{ if }p^2<N,\\
	\mathcal{O} ( \sigma ^{ p-1} |\ln \sigma| ), &\text{ if }p^2=N,\\
	\mathcal{O} ( \sigma ^{ (N-p) / p} ), &\text{ if }p^2 > N,
	\end{array}
	\right.\label{ozao3}
	\end{align}
	where the Bachmann–Landau notation $\mathcal{O} (\alpha (\sigma)) = \beta (\sigma)$ stands for the existence of $c_1, c_2> 0$ such that $c_1 \leq \beta (\sigma)/\alpha (\sigma) \leq c_2.$ In particular $\lim _{\sigma \rightarrow 0 }\| \omega _\sigma \|_s^s = 0 $ for all $p\leq s<p^\ast.$ Moreover, $g(x,\omega _\sigma ) = f(\omega _\sigma) + \omega _\sigma ^{p ^ \ast -1}$ in $B_{\varrho _0}$ and so one can follow \cite[Lemma 6]{lyberopoulos2018}. In fact, the path $ t \mapsto t \omega _\sigma,$ $t \geq 0,$ belongs to $\Gamma _{I_\varepsilon}$ and by Lemma \ref{l_geometry} it suffices to prove that $\max _{t \geq 0} I_\varepsilon (t \omega _\sigma) <  ( 1/N )\mathbb{S}^{N/p}.$ In order do that, we notice that \ref{f_porbaixo} yields
	\begin{equation*}
	I_\varepsilon (t \omega _\sigma)  \leq z _\sigma (t) := \frac{t^p}{p} \| \omega _\sigma \| _{(\varepsilon)} ^p - \lambda t ^{p_0} \| \omega _\sigma \|^{p_0} _{L ^{p_0} (B _{\varrho_\varepsilon})} - \frac{t^{p ^\ast}}{p^\ast},\quad t \geq 0,
	\end{equation*}
	where $B_{\varrho _\varepsilon} = \varepsilon^{-1}B_{\varrho_0}.$ Clearly $z_\sigma $ attains a unique global maximum point $t_\sigma,$ which satisfies
	\begin{equation*}
	\lambda p_0\| \omega _\sigma \|^{p_0}_{L ^{p_0} (B _{\varrho_\varepsilon} )} t_{\sigma} ^{p_0 - p }+t_{\sigma } ^{p^{\ast }   - p } = \| \omega_ \sigma \| _{(\varepsilon)} ^p.
	\end{equation*}
	Using estimates \eqref{ozao1}--\eqref{ozao3} we have the existence of $b_1,$ $b_2>0$ such that $0<b_1\leq t_\sigma  \leq b_2,$ for $0<\sigma<1$ small enough. Now we notice that we can write
	\begin{align*}
	\max _{t \geq 0} I_{\varepsilon} (t \omega _\sigma )  & \leq \max _{t \geq 0} \overline{z} _\sigma (t) - \lambda b_1^{p_0} \| \omega _\sigma \|^{p_0}_{L ^{p_0} (B_{\varrho_\varepsilon})}\\
	& = \frac{1}{N} \| \omega _\sigma \| _{(\varepsilon)} ^N - \lambda b_1^{p_0} \| \omega _\sigma \|^{p_0}_{L ^{p_0} (B_{\varrho_\varepsilon})},
	\end{align*}
	where $\overline{z} _\sigma (t) = p^{-1}\| \omega _\sigma \|_{(\varepsilon)} ^p  t^p - (p^\ast)^{-1} t^{p^ \ast}$ has a unique maximum point $\overline{t}_\sigma  = \| \omega _\sigma  \|_{(\varepsilon)} ^{p/(p_\ast -p)}.$ Next we use the elementary inequality $(a+b)^\alpha  \leq a ^\alpha + \alpha (a+b)^{\alpha -1} b, $ $\alpha \geq 1,$ $a,$ $b>0,$ to get that
	\begin{equation*}
	\| \omega _\sigma \| _{(\varepsilon)} ^N \leq \| \nabla \omega _\sigma \| _p ^N + \frac{N}{p} V_ \varepsilon\left( \| \nabla \omega _\sigma \|^p_p  + V_\varepsilon \| \omega _\sigma \| _p ^p \right)^{(N-p)/p} \| \omega _\sigma \| _p ^p ,
	\end{equation*}
	where $V_\varepsilon = \max _{x \in B_{\varrho_\varepsilon}} V(\varepsilon x),$ and by \eqref{ozao1} we have
	\begin{align*}
	\| \nabla \omega _\sigma \| _p ^N &\leq \mathbb{S}^{N/p}+ \frac{N}{p}(\mathbb{S} +  \mathcal{O}(\sigma ^{(N-p) / p}))^{(N-p)/p}\mathcal{O}(\sigma ^{(N-p) / p})\\
	&=\mathbb{S}^{N/p}+ \mathcal{O}(\sigma ^{(N-p) / p}),
	\end{align*}
	for $0<\sigma <1$ sufficiently small. We conclude that
	\begin{equation*}
	c(I_\varepsilon) \leq \max _{t \geq 0} I_{\varepsilon} (t \omega _\sigma )   \leq \frac{1}{N}\mathbb{S}^{N/p}+\mathcal{O}(\sigma ^{(N-p) / p}) +C_\varepsilon\| \omega _\sigma \| _p ^p - \lambda b_1^{p_0} \| \omega _\sigma \|^{p_0}_{L ^{p_0} (B_{\varrho_\varepsilon})},
	\end{equation*}
	where $C_\varepsilon =  (1/p) V _\varepsilon (\mathbb{S} +V_\varepsilon )^{(N-p)/p}.$ Taking into account estimates \eqref{ozao1}--\eqref{ozao3} and \ref{pi}--\ref{piii} one can verify that $\mathcal{O}(\sigma ^{(N-p) / p}) + C_\varepsilon\| \omega _\sigma \| _p ^p - \lambda b_1^{p_0} \| \omega _\sigma \|^{p_0}_{L ^{p_0} (B_{\varrho_\varepsilon})} <0$ for $\sigma $ small enough.
\end{proof}
In the following results we use Lemma \ref{l_geometry} (see \cite{elves2009}) to guarantee the existence of a Palais-Smale sequence at the mountain pass level, that is, $(v_n) \subset X_\varepsilon$ such that $I_\varepsilon (v_n) \rightarrow c(I_\varepsilon)$ and $\| I' (v_n) \|_{X_\varepsilon ^\ast } \rightarrow 0.$ Additionally, from here on we suppose \ref{V_brutal} and the same conditions of Lemma \ref{l_minimax}.
	\begin{lemma}\label{l_psbounded}
	$(v_n)$ is bounded in $X_\varepsilon.$
	\end{lemma}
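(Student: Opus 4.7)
The plan is a standard Ambrosetti--Rabinowitz style argument adapted to the del Pino--Felmer penalization. First I would form the combination $\mu I_\varepsilon(v_n) - I_\varepsilon'(v_n) v_n$, which equals
\begin{equation*}
\left(\frac{\mu}{p} - 1\right)\| v_n \|_{(\varepsilon)}^p + \int_{\mathbb{R}^N} \bigl[g(\varepsilon x, v_n) v_n - \mu G(\varepsilon x, v_n)\bigr]\dx,
\end{equation*}
with $\mu > p$ supplied by \ref{f_AR}. Using the PS hypotheses $I_\varepsilon(v_n)\to c(I_\varepsilon)$ and $\|I_\varepsilon'(v_n)\|_{X_\varepsilon^\ast}\to 0$, the left-hand side is bounded above by $\mu c(I_\varepsilon) + o(1) + o(1)\|v_n\|_{(\varepsilon)}$. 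Everything then hinges on bounding the integral from below by $-\theta \|v_n\|_{(\varepsilon)}^p$ for some $\theta < \mu/p - 1$.

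To estimate the integral I would split $\mathbb{R}^N = \Omega_\varepsilon \cup \Omega_\varepsilon^c$. On $\Omega_\varepsilon$, item (ii) of Proposition \ref{p_ge} gives $g(\varepsilon x, v_n)v_n - \mu G(\varepsilon x, v_n) \geq 0$, so this region is harmless. On $\Omega_\varepsilon^c$, the penalization replaces the AR inequality by the cruder pointwise bound from item (iii), $g(\varepsilon x, t) \leq (V_0/\kappa_0)\, t^{p-1}$, which integrates to $G(\varepsilon x, t) \leq (V_0/(p\kappa_0))\, t^p$; thus the integrand on $\Omega_\varepsilon^c$ is bounded below by $-(\mu V_0 /(p\kappa_0))v_n^p$. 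Since $V(\varepsilon x) \geq V_0$ on $\Omega_\varepsilon^c$ by \eqref{usa}, I can absorb this into the ambient norm via $V_0 \int_{\Omega_\varepsilon^c} v_n^p \dx \leq \| v_n \|_{(\varepsilon)}^p$, ending up with
\begin{equation*}
\left[\frac{\mu}{p}\left(1 - \frac{1}{\kappa_0}\right) - 1\right] \| v_n \|_{(\varepsilon)}^p \leq \mu c(I_\varepsilon) + o(1) + o(1)\| v_n\|_{(\varepsilon)}.
\end{equation*}

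The main obstacle is forcing the bracketed coefficient to be strictly positive. Since $\kappa_0 > 1$ is at our disposal in the construction of $g$, this is achieved by fixing $\kappa_0$ at the outset large enough that $\mu(1 - 1/\kappa_0) > p$, equivalently $\kappa_0 > \mu/(\mu - p)$; such a choice is possible exactly because \ref{f_AR} supplies $\mu > p$. Once this coefficient is positive, the displayed inequality forces $\|v_n\|_{(\varepsilon)}^p = O(1) + o(1)\|v_n\|_{(\varepsilon)}$, and boundedness in $X_\varepsilon$ follows at once from $p>1$.
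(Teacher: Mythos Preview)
Your argument is correct and follows the same overall path as the paper: form the combination $I_\varepsilon(v_n)-\mu^{-1}I'_\varepsilon(v_n)v_n$, split $\mathbb{R}^N=\Omega_\varepsilon\cup\Omega_\varepsilon^c$, use Proposition~\ref{p_ge}(ii) on $\Omega_\varepsilon$ and Proposition~\ref{p_ge}(iii) on $\Omega_\varepsilon^c$, and absorb the $\Omega_\varepsilon^c$ term via \eqref{usa}.

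The one substantive difference is the estimate on $\Omega_\varepsilon^c$. You discard the nonnegative term $g(\varepsilon x,v_n)v_n$ entirely and bound only $-\mu G$, which produces the coefficient $\tfrac{\mu}{p}(1-\tfrac{1}{\kappa_0})-1$ and forces the extra constraint $\kappa_0>\mu/(\mu-p)$. The paper instead keeps track of $g v_n$ and obtains the sharper pointwise bound
\[
\frac{1}{\mu}g(\varepsilon x,t)t-G(\varepsilon x,t)\;\geq\;\frac{p-\mu}{\mu}\cdot\frac{V_0}{p\kappa_0}\,t^p\qquad (x\notin\Omega_\varepsilon,\ t\geq 0),
\]
by using the AR inequality where $t\leq a_0$ and the exact identity $g=(V_0/\kappa_0)t^{p-1}$ where $t>a_0$. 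This yields the coefficient $\bigl(\tfrac{1}{p}-\tfrac{1}{\mu}\bigr)\bigl(1-\tfrac{1}{\kappa_0}\bigr)$, which is positive for \emph{every} $\kappa_0>1$. Since $\kappa_0>1$ is indeed a free parameter in the construction of $g$, your version is legitimate; the paper's version is simply cleaner and is the constant that gets recorded in \eqref{norma_est} and reused downstream.
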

	\begin{proof}
		For $n$ large enough, by Proposition \ref{p_ge}--(ii) and (iii), we have
		\begin{align*}
		1+c(I_\varepsilon)+\| v_n \| _{(\varepsilon )} & \geq I_\varepsilon (v_n)-\frac{1}{\mu} I'_\varepsilon (v_n)\cdot v_n\\
		& \geq  \left( \frac{1}{p} - \frac{1}{\mu}  \right)     \| v_n \| _{(\varepsilon )} ^p + \int _{\Omega ^c_\varepsilon}\frac{1}{\mu}g (\varepsilon x , v_n) v_n - G (\varepsilon x , v_n) \dx\\
		& \geq \left( \frac{1}{p} - \frac{1}{\mu}  \right)\| v_n \| _{(\varepsilon )} ^p  + \left(\frac{p - \mu }{\mu } \right)\int _{\Omega ^c_\varepsilon} \frac{V_0}{p \kappa_0}(v_n^+)^p\dx \\
		& \geq \left( \frac{1}{p} - \frac{1}{\mu}  \right)\left[   \| \nabla v_n \| _{p} ^p + \left(1- \frac{1}{\kappa_0} \right) \int _{\mathbb{R}^N}   V(\varepsilon x) |v_n|^p\dx\right]\\
		&\geq\left( \frac{1}{p} - \frac{1}{\mu}  \right) \left( 1 - \frac{1}{\kappa_0}\right) \| v_n \| _{(\varepsilon )}^p.
		\end{align*}
		In particular, one can see that $(v_n)$ cannot be unbounded.
	\end{proof}
Since $X_\varepsilon$ is reflexive space, by Lemma \ref{l_psbounded} we have, up to a subsequence,
\begin{gather}
v_n \rightharpoonup v \text{ in }X_\varepsilon \text{ and in }L ^{p ^\ast }(\mathbb{R}^N),\nonumber \\
v_n \rightarrow v\text{ in }L ^\theta _{\loca} (\mathbb{R}^N) \text{ for any }1 \leq \theta < p _\ast,\label{local_comp}\\
 v_n (x) \rightarrow v (x) \text{ a.e. in }\mathbb{R}^N.\nonumber
\end{gather}
\begin{lemma}\label{l_critconv}
$v_n \rightarrow v$ in $L _{\loca}^{p_\ast } (\mathbb{R}^N),$ up to a subsequence.
\end{lemma}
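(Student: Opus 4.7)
The plan is to rule out $L^{p^\ast}$-concentration of $(v_n)$ via P.-L.~Lions' second concentration-compactness principle, and then derive a contradiction against the minimax estimate in Lemma \ref{l_minimax}. Since $(v_n)$ is bounded in $X_\varepsilon\hookrightarrow W^{1,p}(\mathbb{R}^N)\cap L^{p^\ast}(\mathbb{R}^N)$, up to a subsequence there exist nonnegative finite Radon measures $\mu,\nu$, an at most countable set $J$ with points $x_j\in\mathbb{R}^N$ and weights $\mu_j,\nu_j\ge 0$ such that, in the sense of weak-$\ast$ convergence of measures,
\begin{equation*}
|v_n|^{p^\ast}\dx\rightharpoonup \nu=|v|^{p^\ast}+\sum_{j\in J}\nu_j\delta_{x_j},\qquad |\nabla v_n|^p\dx\rightharpoonup \mu \ge |\nabla v|^p+\sum_{j\in J}\mu_j\delta_{x_j},
\end{equation*}
together with $\mathbb{S}\,\nu_j^{p/p^\ast}\le\mu_j$ for each $j\in J$. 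Once $J=\emptyset$ is established, the a.e.\ convergence in \eqref{local_comp} combined with the Brezis--Lieb lemma yields $v_n\to v$ in $L^{p^\ast}_{\loca}(\mathbb{R}^N)$.

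Fix $j\in J$ and assume, for contradiction, $\nu_j>0$. For $\rho>0$ choose $\phi_\rho\in C_0^\infty(\mathbb{R}^N)$ with $\phi_\rho\equiv 1$ on $B_{\rho/2}(x_j)$, $\supp(\phi_\rho)\subset B_\rho(x_j)$, and $|\nabla\phi_\rho|\le 4/\rho$. Since $\phi_\rho v_n$ is bounded in $X_\varepsilon$ uniformly in $n$, the identity $I'_\varepsilon(v_n)\cdot(\phi_\rho v_n)=o_n(1)$ reads
\begin{equation*}
\int_{\mathbb{R}^N}\phi_\rho|\nabla v_n|^p\dx+\int_{\mathbb{R}^N}v_n|\nabla v_n|^{p-2}\nabla v_n\cdot\nabla\phi_\rho\dx+\int_{\mathbb{R}^N}V(\varepsilon x)|v_n|^p\phi_\rho\dx = \int_{\mathbb{R}^N}g(\varepsilon x,v_n)v_n\phi_\rho\dx+o_n(1).
\end{equation*}
From the definition of $g$ together with Proposition \ref{p_ge}--(iii), the pointwise estimate
\begin{equation*}
g(\varepsilon x,v_n)v_n\le \mathcal{X}_{\Omega_\varepsilon}(x)\bigl(f(v_n)+v_n^{p^\ast-1}\bigr)v_n+\tfrac{V_0}{\kappa_0}v_n^p
\end{equation*}
holds globally. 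Passing $n\to\infty$ and then $\rho\to 0^+$: the potential and subcritical contributions vanish by \ref{f_brutal} and \eqref{local_comp}; by weak-$\ast$ convergence and $v\in L^{p^\ast}(\mathbb{R}^N)$ one obtains $\int v_n^{p^\ast}\phi_\rho\dx\to\int\phi_\rho\,d\nu\to\nu_j$; and the coupling term vanishes via H\"older with exponents $p,N,p^\ast$ applied to $\|v_n\nabla\phi_\rho\|_p\le\|\nabla\phi_\rho\|_N\|v_n\|_{L^{p^\ast}(B_\rho(x_j))}$ together with $\|v\|_{L^{p^\ast}(B_\rho(x_j))}\to 0$. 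Lower semicontinuity of the left-hand side on open sets then gives $\mu_j\le\nu_j$, and combined with $\mathbb{S}\nu_j^{p/p^\ast}\le\mu_j$ this forces $\nu_j\ge\mathbb{S}^{N/p}$. Moreover, if $x_j\notin\overline{\Omega_\varepsilon}$, the same test (with $\rho$ so small that $\supp\phi_\rho\cap\Omega_\varepsilon=\emptyset$) uses only the subcritical bound $g(\varepsilon x,v_n)v_n\le(V_0/\kappa_0)v_n^p$ on the right-hand side, forcing $\mu_j=0$ and hence $\nu_j=0$; so every atom of $\nu$ lies in $\overline{\Omega_\varepsilon}$.

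To conclude, apply \ref{f_AR} to obtain $\mathcal{G}(\varepsilon x,v_n)\ge (p/N)v_n^{p^\ast}$ on $\Omega_\varepsilon$, with $\mathcal{G}\ge 0$ globally by Proposition \ref{p_ge}--(iv). Then
\begin{equation*}
c(I_\varepsilon)=\lim_{n\to\infty}\Bigl(I_\varepsilon(v_n)-\tfrac{1}{p}I_\varepsilon'(v_n)\cdot v_n\Bigr)=\lim_{n\to\infty}\tfrac{1}{p}\int_{\mathbb{R}^N}\mathcal{G}(\varepsilon x,v_n)\dx\ge \tfrac{1}{N}\liminf_n\int_{\Omega_\varepsilon}v_n^{p^\ast}\dx\ge \tfrac{1}{N}\mathbb{S}^{N/p},
\end{equation*}
contradicting Lemma \ref{l_minimax}. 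The main obstacle is the aforementioned double-limit vanishing of the coupling term $\int v_n|\nabla v_n|^{p-2}\nabla v_n\cdot\nabla\phi_\rho\dx$; a secondary technicality is the boundary case $x_j\in\partial\Omega_\varepsilon$ in the final energy step, resolved by the fact that, since concentration on the subcritical side $\Omega_\varepsilon^c$ has been ruled out, the mass of any atom located on $\partial\Omega_\varepsilon$ is captured by $\liminf_n\int_{\Omega_\varepsilon}v_n^{p^\ast}\dx$ through the standard approximation of $\nu(\Omega_\varepsilon)$ by lower-semicontinuous test functions.
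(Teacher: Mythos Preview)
Your overall strategy matches the paper's: apply Lions' second concentration--compactness principle, test against localized cut-offs to obtain $\mu_j\le\nu_j$ and hence $\nu_j\ge\mathbb{S}^{N/p}$, and contradict Lemma~\ref{l_minimax}. Two points, however, are genuine gaps.

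\medskip
\textbf{The coupling term.} You estimate $\|v_n\nabla\phi_\rho\|_p\le\|\nabla\phi_\rho\|_N\|v_n\|_{L^{p^\ast}(B_\rho(x_j))}$ and then appeal to $\|v\|_{L^{p^\ast}(B_\rho(x_j))}\to 0$. But $\|v_n\|_{L^{p^\ast}(B_\rho(x_j))}$ does \emph{not} converge to $\|v\|_{L^{p^\ast}(B_\rho(x_j))}$ as $n\to\infty$: the atom $\nu_j\delta_{x_j}$ sits inside $B_\rho(x_j)$, so $\limsup_n\int_{B_\rho(x_j)}|v_n|^{p^\ast}\dx\ge\nu_j>0$ for every $\rho>0$. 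The fix is to observe that $\nabla\phi_\rho$ is supported on the annulus $B_\rho(x_j)\setminus B_{\rho/2}(x_j)$, which excludes $x_j$; one controls $\|v_n\|_{L^{p^\ast}}$ over that annulus instead. The paper does precisely this and, in addition, carefully handles the possibility that other atoms $x_k$ accumulate at $x_j$ within the shrinking annuli.

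\medskip
\textbf{The boundary case in the energy step.} Your final inequality requires $\liminf_n\int_{\Omega_\varepsilon}|v_n|^{p^\ast}\dx\ge\nu_j$. Lower semicontinuity on the open set $\Omega_\varepsilon$ only yields $\liminf_n\int_{\Omega_\varepsilon}|v_n|^{p^\ast}\dx\ge\nu(\Omega_\varepsilon)$, and $\nu(\Omega_\varepsilon)$ does \emph{not} include $\nu_j$ when $x_j\in\partial\Omega_\varepsilon$; the appeal to ``standard approximation by lower-semicontinuous test functions'' does not resolve this, and in general an atom on the boundary may split between the two sides. The paper avoids this entirely: after obtaining $\mu_j\ge\mathbb{S}^{N/p}$ it lower-bounds $c(I_\varepsilon)$ by a multiple of $\liminf_n\|\nabla v_n\|_p^p$ (via the computation in the proof of Lemma~\ref{l_psbounded}) and then uses $\liminf_n\|\nabla v_n\|_p^p\ge\mu_\ast(\mathbb{R}^N)\ge\mu_j$. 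That route needs neither localization to $\Omega_\varepsilon$ nor any argument about where the atoms lie, so your detour through ``atoms must belong to $\overline{\Omega_\varepsilon}$'' becomes unnecessary.
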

\begin{proof}
Our argument is close related to the one used in \cite[Lemma 7]{lyberopoulos2018}. There are nonnegative measures $\mu _\ast$ and $\nu _\ast$ such that, up to a subsequence,
\begin{equation*}
| \nabla v_n | ^p \rightharpoonup \mu _\ast \quad \text{and}\quad |v _n |^{p ^\ast } \rightharpoonup \nu _\ast,
\end{equation*}
weakly in the space of measures $\mathcal{M}(\mathbb{R}^N).$ Therefore one can apply Lions' concentration-compactness principle \cite{lions_limitcase1}	to obtain the existence of an, at most, countable index set $\mathscr{J} \subset \mathbb{N}$ (possible empty), a collection of distinct points $\{ x_k\in \mathbb{R}^N: k \in \mathscr{J}\}$ and two set of numbers $\{ \mu _k  > 0: k \in \mathscr{J} \}$ and $\{  \nu _k  > 0: k \in \mathscr{J} \}$ such that
\begin{equation}\label{cclions}
\mu _\ast \geq | \nabla v | ^p + \sum _{k \in \mathscr{J} } \mu _k \delta _{x_k}\quad \text{and}\quad \nu _\ast = |v|^{p ^\ast } + \sum _{k \in \mathscr{J}  }\nu _k \delta _{x_k},
\end{equation}
where $\delta _{x_k}$ is the Dirac measure supported at $x_k.$ Furthermore,
\begin{equation}\label{cc_inequality}
\mu _k \geq \mathbb{S} \nu _k ^{p/p^\ast},\quad \forall \, k \in \mathscr{J},\quad \text{and}\quad \sum _{k \in \mathscr{J}  }\nu _k  < +\infty.
\end{equation}
We are going to prove that $\mathscr{J} = \emptyset,$ by arguing by contradiction and assuming the existence of $j \in \mathscr{J}.$ Let $\psi \in C^\infty([0, \infty):[0,1])$ such that $ \psi ' \in L^\infty ([0, \infty )),$ $\psi (t) = 1,$ if $0 \leq t \leq 1,$ and $\psi (t)=0,$ if $t\geq 2.$ Given $\delta >0,$ consider $\psi _{j,\delta } (x) := \psi (|x - x_j| ^2\delta ^{-2})$ and $A_{j,\delta } = B_{\sqrt{2} \delta }(x_j) \setminus \overline{B _\delta }(x_j).$ It is easy to see that $\psi _{j,\delta } \neq 0$ in $A_{j,\delta }$ and $\|\nabla \psi _{j,\delta} \|_\infty \leq 2\| \psi ' \| _\infty \delta ^{-1}.$ Furthermore, since $I'_\varepsilon (v_n) \cdot  (\psi _{j, \delta } v_n) = o_n (1),$
\begin{align}
\int_{\mathbb{R}^N} & v_n | \nabla v_n |^{p-2}\nabla v_n \cdot \nabla \psi _{j,\delta}  \dx + \int _{\mathbb{R}^N}  \psi _{j,\delta}\dmu _\ast  + \int _{\mathbb{R}^N}  V(\varepsilon x) |v| ^p \psi _{j,\delta} \dx \nonumber\\ &= \int _{\mathbb{R}^N} g(\varepsilon x, v_n) v_n \psi _{j,\delta} \dx + o_n (1).\label{idbasic2}
\end{align}
where we have used \eqref{local_comp}. Now we show that
\begin{equation}\label{gradiente_delta}
\lim _{\delta \rightarrow 0} \left[ \limsup _{n \rightarrow \infty} \int_{\mathbb{R}^N}  v_n | \nabla v_n |^{p-2}\nabla v_n \cdot \nabla \psi _{j,\delta}  \dx \right] = 0.
\end{equation}
In fact, by H\"{o}lder's inequality we have
\begin{align}
\left| \int _{\mathbb{R}^N}  v_n | \nabla v_n |^{p-2}\nabla v_n \cdot \nabla \psi _{j,\delta}  \dx   \right| & \leq |A_{j, \delta }|^{1/N}\left( \int _{A_{j, \delta }}  |\nabla v_n |^p \dx  \right)^{(p-1)/p} \left( \int _{A_{j, \delta}}  |v_n| ^{p ^\ast } |\nabla \psi_{j,\delta} |^{p ^\ast} \dx   \right)^{1/p^\ast } \nonumber\\
& \leq C \left(  \int _{A_{j, \delta } } |v| ^{p ^\ast}\dx + \sum _{k \in \mathscr{J}_{j, \delta } }\nu _k  + o_n (1) \right)^{1/ p ^\ast}, \label{idbasic3}
\end{align}
where $\mathscr{J}_{j, \delta } = \{ x_k \in \mathscr{J} : k \in A_{j, \delta } \}.$ Notice that if $\mathscr{J}_{j, \delta }$ is finite, we can take $\delta$ small enough in such way that $\mathscr{J}_{j, \delta } = \emptyset.$ Suppose $\mathscr{J}_{j, \delta }$ infinite and denote $m_{j, \delta } = \min \mathscr{J}_{j, \delta }.$ Then $m_{j, \delta } \rightarrow \infty ,$ as $\delta \rightarrow 0.$ In fact, if not, there would exist $M_0>0$ and a sequence $\delta _n \rightarrow 0$ with $m_{j, \delta _n} \leq M_0.$ Hence $(m_{j, \delta _n})_n$ is finite and there is a subsequence such that $m_{j, \delta _{n_k}} = m_{j, \delta _{l_0}}$ for some $l_0 \in \mathbb{N}.$ This implies that $m_{j, \delta _{n_k}} \in \mathscr{J}_{j, \delta _{l_0}}$ and so $x_{n_k} \in A_{j, \delta _{l_0}}$ for any $k,$ which leads to a contradiction with $x_{n_k} \rightarrow x_j.$ Since $\lim _{\delta \rightarrow 0}m_{j, \delta } = \infty$  and $\sum _{k \in \mathscr{J}  }\nu _k$ converges, $\sum _{k \in \mathscr{J}_{j, \delta } }\nu _k \rightarrow 0, $ as $\delta \rightarrow 0.$ This fact together with dominated converge theorem allow us to obtain \eqref{gradiente_delta}, by taking $\delta \rightarrow 0$ in \eqref{idbasic3}.

On the other hand, applying \eqref{local_comp} again together with \eqref{cclions} we obtain
\begin{align}
\int _{\mathbb{R}^N } g (\varepsilon x, v_n) v_n \psi _{j, \delta } \dx \leq  & \int _{ \Omega _\varepsilon } f(v) v \psi _{j, \delta } \dx +  \frac{1}{\kappa_0}\int _{ \Omega _\varepsilon ^c } V(\varepsilon x ) |v| ^p \psi _{j, \delta } \dx \nonumber \\
& +\int _{\mathbb{R}^N } |v| ^{p ^\ast } \psi _{j, \delta } \dx + \sum _{k \in \mathscr{J}} \nu _k \psi _{j, \delta  } (x_k)+ o_n(1).\label{deltalimite}
\end{align}
Clearly $\sum _{k \in \mathscr{J}} \nu _k \psi _{j, \delta  } (x_k)$ is uniformly convergent on $\delta$ and we can take the limit as $\delta \rightarrow 0$ in \eqref{deltalimite} to get
\begin{equation}\label{gzinho}
\lim _{\delta \rightarrow 0 }\left[\limsup _{n \rightarrow \infty}\int _{\mathbb{R}^N } g (\varepsilon x, v_n) v_n \psi _{j, \delta } \dx  \right] \leq \nu _j,
\end{equation}
where dominated converge theorem is applied. Next we have
\begin{equation}\label{mizao}
\int _{\mathbb{R}^N}  \psi _{j,\delta}\dmu _\ast \geq  \int _{\mathbb{R}^N } |\nabla u|^p  \psi _{j,\delta }\dx + \sum _{k \in \mathscr{J }} \mu _k \psi _{j,k} (x_k) \geq \mu _j.
\end{equation}
Nevertheless, since $|\nabla v_n|^p \rightharpoonup \mu _\ast $ in the sense of measures, we known
\begin{equation}\label{weak_measure}
\mu_\ast  (\mathbb{R}^N) \leq \liminf _{n \rightarrow \infty } \int _{\mathbb{R}^N }|\nabla v_n| ^p \dx,
\end{equation}
implying in
\begin{equation}\label{delta_conv}
\lim _{\delta \rightarrow 0 }\int _{\mathbb{R}^N} \psi_{j, \delta  } \dmu_\ast = 0.
\end{equation}
Taking into account \eqref{gradiente_delta}, \eqref{gzinho}, \eqref{mizao} and \eqref{delta_conv} to pass the limit first as $n \rightarrow \infty $ and then $\delta \rightarrow 0$ in \eqref{idbasic2}, we conclude that $\mu _j \leq \nu _j.$ This fact together with \eqref{cc_inequality} leads to $\mu _j \geq \mathbb{S}^{N/p}.$ By \eqref{weak_measure} and from the proof of Lemma \ref{l_psbounded}, there holds
\begin{align*}
c(I_\varepsilon ) & \geq\left( \frac{1}{p} - \frac{1}{\mu}  \right) \left( 1 - \frac{1}{\kappa _0} \right) \liminf_{n \rightarrow \infty }\int _{\mathbb{R}^N} |\nabla v_n|^p \dx \\
& \geq \frac{1}{p} \left[ \int _{\mathbb{R}^N} | \nabla v | ^p \dx  + \sum _{k \in \mathscr{J} } \mu _k  \right] \geq \frac{1}{N}\mathbb{S}^{N/p}.
\end{align*}
This inequality contradicts Lemma \ref{l_minimax} and allow us to conclude that $\mathscr{J} = \emptyset.$ To complete the proof let us take $\eta _R \in C^\infty _0 (\mathbb{R}^N:[0,1])$ such that $\eta =1$ in $B_R$ and $\eta = 0 $ in $B_{2R}^c.$ Recalling \eqref{local_comp} and \eqref{cclions}, we see that
\begin{equation*}
v_n \eta _R ^{1/p ^\ast } \rightharpoonup v \eta _R ^{1/p ^\ast }\text{ in }L ^{p ^\ast } (\mathbb{R}^N)\quad \text{and}\quad \| v_n \eta _R ^{1/p ^\ast } \| _{p ^\ast } \rightarrow \| v \eta _R ^{1/p ^\ast } \| _{p ^\ast },
\end{equation*}
up to a subsequence. The fact that $L^{p ^\ast } (\mathbb{R}^N)$ is a uniformly convex space leads to $\| (v_n - v) \eta _R ^{1/p ^\ast } \|_{p ^\ast } \rightarrow 0.$
\end{proof}
\begin{lemma}\label{l_delpino}
	For each $ \delta >0,$ there is $R=R(\varepsilon,  \delta ) >0$ such that
	\begin{equation*}
	\limsup _{n \rightarrow \infty} \int _{ B^c_R} |\nabla v_n |^p +V(\varepsilon x)|v_n|  ^p \dx < \delta.
	\end{equation*}
\end{lemma}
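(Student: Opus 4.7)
The plan is to use a cutoff function concentrated away from $\Omega_\varepsilon$ together with the penalization estimate of Proposition \ref{p_ge}--(iii), which gives us control of the nonlinearity by a small multiple of $V(\varepsilon x)|v_n|^{p-1}$ on $\Omega_\varepsilon^c$.

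More precisely, I would fix $R_0 = R_0(\varepsilon)$ large enough so that $\Omega_\varepsilon \subset B_{R_0}$, which is possible since $\Omega$ is bounded. For $R \geq R_0$ choose a smooth cutoff $\eta_R \in C^\infty(\mathbb{R}^N;[0,1])$ with $\eta_R \equiv 0$ on $B_R$, $\eta_R \equiv 1$ on $B_{2R}^c$, and $\|\nabla \eta_R\|_\infty \leq C/R$ for some universal $C>0$. Then $\eta_R v_n \in X_\varepsilon$ is an admissible test function, and since $\|I'_\varepsilon(v_n)\|_{X_\varepsilon^\ast} \to 0$ with $(v_n)$ bounded in $X_\varepsilon$ by Lemma \ref{l_psbounded}, one has $I'_\varepsilon(v_n)\cdot(\eta_R v_n) = o_n(1)$. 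Expanding this gives
\begin{equation*}
\int_{\mathbb{R}^N} \eta_R \bigl(|\nabla v_n|^p + V(\varepsilon x)|v_n|^p\bigr)\dx + \int_{\mathbb{R}^N} v_n |\nabla v_n|^{p-2}\nabla v_n \cdot \nabla \eta_R \dx = \int_{\mathbb{R}^N} g(\varepsilon x,v_n) v_n \eta_R \dx + o_n(1).
\end{equation*}

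Since $\mathrm{supp}(\eta_R) \subset \Omega_\varepsilon^c$ by our choice of $R$, Proposition \ref{p_ge}--(iii) yields $g(\varepsilon x, v_n) v_n \eta_R \leq (V_0/\kappa_0)(v_n^+)^p \eta_R \leq (1/\kappa_0) V(\varepsilon x)|v_n|^p \eta_R$ in view of \eqref{usa}. Absorbing that term on the left produces
\begin{equation*}
\Bigl(1 - \tfrac{1}{\kappa_0}\Bigr)\int_{\mathbb{R}^N} \eta_R \bigl(|\nabla v_n|^p + V(\varepsilon x)|v_n|^p\bigr)\dx \leq \Bigl|\int_{\mathbb{R}^N} v_n |\nabla v_n|^{p-2}\nabla v_n \cdot \nabla \eta_R \dx\Bigr| + o_n(1),
\end{equation*}
where $\kappa_0 > 1$ ensures the prefactor is strictly positive.

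The main point is then to control the gradient remainder. By Hölder's inequality,
\begin{equation*}
\Bigl|\int_{\mathbb{R}^N} v_n |\nabla v_n|^{p-2}\nabla v_n \cdot \nabla \eta_R \dx\Bigr| \leq \|\nabla \eta_R\|_\infty \|\nabla v_n\|_p^{p-1} \|v_n\|_{L^p(B_{2R}\setminus B_R)} \leq \frac{C'}{R},
\end{equation*}
where $C'$ depends only on $\sup_n \|v_n\|_{1,p}$, which is finite by Lemma \ref{l_psbounded} together with the continuous embedding $X_\varepsilon \hookrightarrow W^{1,p}(\mathbb{R}^N)$ of Proposition \ref{p_norma}. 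Since $\eta_R \equiv 1$ on $B_{2R}^c$, taking $R$ so large that $C'/R < (1 - 1/\kappa_0)\delta/2$ and then $\limsup_{n\to\infty}$ yields the conclusion with $B_{2R}^c$ in place of $B_R^c$; a relabeling $R \leftrightarrow 2R$ produces the statement.

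No step looks delicate: the only potential obstacle would be if the gradient boundary term failed to vanish, but the $1/R$ decay from $\|\nabla \eta_R\|_\infty$ combined with the uniform $W^{1,p}$-bound is exactly what the penalization mechanism was designed to exploit, so the argument closes cleanly.
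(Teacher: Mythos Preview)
Your proof is correct and follows essentially the same approach as the paper: choose a cutoff supported outside $\Omega_\varepsilon$, test $I'_\varepsilon(v_n)$ against $\eta_R v_n$, absorb the nonlinearity via Proposition \ref{p_ge}--(iii) and \eqref{usa}, and control the cross term $\int v_n|\nabla v_n|^{p-2}\nabla v_n\cdot\nabla\eta_R$ by H\"older and the $1/R$ decay of $\nabla\eta_R$. The only cosmetic difference is that the paper bounds $\|v_n\|_{L^p(\Omega_\varepsilon^c)}$ directly from $V(\varepsilon x)\geq V_0$ on $\Omega_\varepsilon^c$ (see \eqref{final1}), whereas you invoke the embedding $X_\varepsilon\hookrightarrow W^{1,p}(\mathbb{R}^N)$ of Proposition \ref{p_norma}; both are valid.
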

\begin{proof}
	Let $B_{R/2} \supset \Omega _\varepsilon$ and define $\psi _R \in C_0 ^\infty (\mathbb{R}^N)$ such that $\psi _R(x) = 0 ,$ if $x \in B_{R/2},$  and $\psi _R(x) = 1,$ if $x \not \in B_R,$ with $\|\nabla \psi _R \|_\infty  \leq C/R,$ for some $C>0.$ Since $(\psi _R v_n)_n \subset X_\varepsilon$ is bounded, we have $I' (v_n)\cdot (\psi _R v_n) = o_n(1)$ and 
	\begin{equation*}
	\int _{\Omega^c _\varepsilon} \psi _R \left(  |\nabla v_n| ^p + V(\varepsilon x)|v_n| ^p  \right)\dx = o_n(1) - \int _{\Omega^c _\varepsilon} v_n |\nabla v_n|  ^{p-2} \nabla v_n \cdot \nabla \psi _R \dx + \int _{\Omega^c _\varepsilon} \psi _R g(\varepsilon x , v_n)v_n \dx.
	\end{equation*}
	On the other hand, by Proposition \ref{p_ge}--(iii), if $\varepsilon x \not \in \Omega$ then $g(\varepsilon x, t ) \leq  (V _0 / \kappa_0)t^{p-1} \leq (1/\kappa_0) V(\varepsilon x)  t^{p-1}.$ Hence
	\begin{equation*}
		\int _{\Omega _\varepsilon ^c} |\nabla v_n| ^p \psi _R  \dx + \left(1-\frac{1}{\kappa_0} \right) \int _{\Omega _\varepsilon ^c} V(\varepsilon x )|v_n|^p \psi _R	\dx \leq o_n(1)	 - \int _{ \Omega _\varepsilon ^c} v_n |\nabla v_n| ^{p-2} \nabla v_n \cdot \nabla \psi _R \dx
	\end{equation*}
Furthermore, H\"{o}lder's inequality implies,
	\begin{equation*}
	\left| \int _{\Omega _\varepsilon ^c}  v_n |\nabla v_n| ^{p-2}\nabla v_n \cdot \nabla \psi _R \dx \right| 	\leq \| \nabla \psi _R \| _\infty \| \nabla v_n \| _p \| v_n \|_{L^p(\Omega _\varepsilon ^c)}.
	\end{equation*}
In particular, since $(\|v_n\|_{(\varepsilon)})_n$ is bounded and \eqref{usa}, we have
\begin{equation}\label{final1}
    V_0 \int _{\Omega _\varepsilon ^c} |v_n|^p\dx \leq \int _{\Omega _\varepsilon ^c} V(\varepsilon x ) |v_n|^p \dx \leq \| v_n \| _{(\varepsilon)}^p ,
\end{equation}
and
	\begin{equation*}
	\left(1- \frac{1}{\kappa_0} \right)\int _{B^c_{R}}  |\nabla v_n| ^p + V(\varepsilon x)|v_n|^p  \dx \leq o_n(1) + \frac{C}{R}.\qedhere
	\end{equation*}
\end{proof}
\begin{lemma}
$v_n \rightarrow v$ in $X_\varepsilon,$ up to a subsequence.
\end{lemma}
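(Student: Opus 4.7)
The plan is to use the strong monotonicity of the $p$-Laplacian combined with the convergence of the nonlinear right-hand side. Since $(v_n)$ is a Palais--Smale sequence for $I_\varepsilon$ and $(v_n - v)$ is bounded in $X_\varepsilon$, the identity $I'_\varepsilon(v_n)\cdot(v_n - v) = o_n(1)$ reads as
\begin{align*}
\int_{\mathbb{R}^N} |\nabla v_n|^{p-2}\nabla v_n \cdot \nabla(v_n - v)\dx & + \int_{\mathbb{R}^N} V(\varepsilon x)|v_n|^{p-2}v_n(v_n - v)\dx \\
& = \int_{\mathbb{R}^N} g(\varepsilon x, v_n)(v_n - v)\dx + o_n(1).
\end{align*}
On the other hand, the fixed quantities $|\nabla v|^{p-2}\nabla v$ and $V(\varepsilon x)|v|^{p-2}v$ induce, via H\"{o}lder's inequality, bounded linear functionals on $X_\varepsilon$; hence by weak convergence $v_n \rightharpoonup v$ in $X_\varepsilon$,
\begin{equation*}
\int_{\mathbb{R}^N} |\nabla v|^{p-2}\nabla v \cdot \nabla(v_n - v)\dx + \int_{\mathbb{R}^N} V(\varepsilon x)|v|^{p-2}v(v_n - v)\dx \longrightarrow 0.
\end{equation*}
Subtracting, the task reduces to showing $\int_{\mathbb{R}^N} g(\varepsilon x, v_n)(v_n - v)\dx \to 0$, from which the monotonicity identity
\begin{equation*}
\int_{\mathbb{R}^N} \bigl(|\nabla v_n|^{p-2}\nabla v_n - |\nabla v|^{p-2}\nabla v\bigr)\cdot \nabla(v_n - v)\dx + \int_{\mathbb{R}^N} V(\varepsilon x)\bigl(|v_n|^{p-2}v_n - |v|^{p-2}v\bigr)(v_n - v)\dx \longrightarrow 0
\end{equation*}
will follow.

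To prove $\int g(\varepsilon x, v_n)(v_n - v)\dx \to 0$, I would split the integral along $\Omega_\varepsilon$ and $\Omega_\varepsilon^c$. Since $\Omega_\varepsilon$ is bounded (with $\varepsilon$ fixed), Lemma \ref{l_critconv} provides $v_n \to v$ in $L^{p^\ast}(\Omega_\varepsilon)$; together with \ref{f_brutal} and H\"{o}lder's inequality this takes care of $\int_{\Omega_\varepsilon}(f(v_n) + v_n^{p^\ast - 1})(v_n - v)\dx \to 0$. On $\Omega_\varepsilon^c$, Proposition \ref{p_ge}--(iii) bounds $g(\varepsilon x, v_n) \leq (V_0/\kappa_0)|v_n|^{p-1}$; I would decompose $\Omega_\varepsilon^c = (\Omega_\varepsilon^c \cap B_R) \cup B_R^c$, invoking Rellich compactness $W^{1,p}(B_R) \hookrightarrow L^p(B_R)$ and H\"{o}lder's inequality for the bounded piece, and Lemma \ref{l_delpino} with \eqref{final1} for the tail, which can be made arbitrarily small uniformly in $n$ by taking $R$ large.

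Finally, since both integrands in the monotonicity identity above are pointwise nonnegative, each of the two integrals tends to zero separately. I would then apply the classical Simon inequalities: $(|\xi|^{p-2}\xi - |\eta|^{p-2}\eta)\cdot(\xi - \eta) \geq c_p|\xi - \eta|^p$ for $p \geq 2$, and its counterpart $(|\xi|^{p-2}\xi - |\eta|^{p-2}\eta)\cdot(\xi - \eta) \geq c_p|\xi - \eta|^2(|\xi| + |\eta|)^{p-2}$ for $1 < p < 2$ (handled by an extra H\"{o}lder step using the boundedness of $\|v_n\|_{(\varepsilon)}$), to obtain $\nabla v_n \to \nabla v$ in $L^p(\mathbb{R}^N)$ and $V(\varepsilon x)^{1/p} v_n \to V(\varepsilon x)^{1/p} v$ in $L^p(\mathbb{R}^N)$, hence $\|v_n - v\|_{(\varepsilon)} \to 0$. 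The main obstacle is the convergence of the nonlinear integral in the presence of the critical exponent: Lemma \ref{l_critconv} (which relied on the minimax estimate of Lemma \ref{l_minimax} to rule out concentration) handles concentration on compacta, while Lemma \ref{l_delpino} prevents escape of mass to infinity.
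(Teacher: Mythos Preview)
Your proof is correct and follows essentially the same route as the paper: reduce to showing $\int_{\mathbb{R}^N} g(\varepsilon x,v_n)(v_n-v)\dx\to 0$, handle the compact region via Lemma~\ref{l_critconv} and the tail via Lemma~\ref{l_delpino}, then exploit monotonicity. The only noteworthy difference is the final step: where you invoke Simon's inequalities directly to obtain $\|v_n-v\|_{(\varepsilon)}\to 0$, the paper instead applies the inequality $\int \varrho\,(|X|^{p-2}X-|Y|^{p-2}Y)\cdot(X-Y)\dx \geq (\|\varrho^{1/p}X\|_p^{p-1}-\|\varrho^{1/p}Y\|_p^{p-1})(\|\varrho^{1/p}X\|_p-\|\varrho^{1/p}Y\|_p)$ to deduce $\|v_n\|_{(\varepsilon)}\to\|v\|_{(\varepsilon)}$ and then concludes by uniform convexity of $X_\varepsilon$; both are standard and equivalent in effect.
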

\begin{proof}
Recalling $v_n\rightharpoonup v$ in $X_\varepsilon$ and $V(\varepsilon x ) = V(\varepsilon x )^{1/p+(p-1)/p}$ it is clear that
\begin{equation*}
\int _{\mathbb{R}^N } |\nabla v|^{p-2}\nabla v \cdot \nabla (v_n - v) \dx = o_n(1)\quad \text{and} \quad \int _{\mathbb{R}^N} V(\varepsilon x) |v|^{p-2}v \, (v_n - v) \dx = o_n (1).
\end{equation*}
We apply this fact on the expression of $I'_\varepsilon(v_n) \cdot (v_n - v)=o_n(1)$ to obtain
\begin{align*}
o_n(1) &= I'_\varepsilon(v_n) \cdot (v_n - v)\\
& = \int _{\mathbb{R}^N } |\nabla v_n| ^{p-2} \nabla v_n \cdot \nabla ( v_n - v) \dx + \int _{\mathbb{R}^N } V(\varepsilon x)  |v_n | ^{p-2}  v_n (v_n - v)\dx - \int _{\mathbb{R}^N} g(\varepsilon x, v_n) (v_n - v ) \dx\\
& = \int _{\mathbb{R}^N } (|\nabla v_n| ^{p-2} \nabla v_n - |\nabla v|^{p-2}\nabla v ) \cdot  \nabla (v_n - v)\dx + \int _{\mathbb{R}^N } V(\varepsilon x) (|v_n|^{p-2}v_n - |v|^{p-2}v) (v_n - v)\dx \\
& \qquad  - \int _{\mathbb{R}^N} g(\varepsilon x, v_n) (v_n - v ) \dx+ o_n(1).
\end{align*}
It suffices to prove that
\begin{equation}\label{gefinal}
\int _{\mathbb{R}^N} g(\varepsilon x, v_n) (v_n - v ) \dx=o_n(1).
\end{equation}
In fact, applying the following inequality \cite[Proposition 8]{lyberopoulos2018},
\begin{align*}
\int _{\mathbb{R}^N } & \varrho (x) (|X|^{p-2}X - |Y|^{p-2}Y)\cdot (X-Y) \dx \\  & \geq \left( \|\varrho ^{1/p} |X| \|^{p-1} _p - \| \varrho ^{1/p} |Y| \|^{p-1} _p  \right) \left(\|\varrho ^{1/p} |X| \|_p - \|  \varrho ^{1/p} |Y|  \|_p  \right),
\end{align*}
which holds for any $X,$ $Y:\mathbb{R}^N \rightarrow \mathbb{R}^m,$ $m \geq 1,$ functions in $L ^p (\mathbb{R}^N; \varrho)$ (Weighted Lebesgue spaces), we see that if \eqref{gefinal} yields, then $\| v_n \| _{(\varepsilon )} \rightarrow \| v \| _{(\varepsilon)},$ as $n \rightarrow \infty,$ and the fact that $X_\varepsilon $ is uniformly convex leads to $v_n \rightarrow v$ in $X_\varepsilon.$ To prove \eqref{gefinal} we start by noticing that H\"{o}lder's inequality, Proposition \ref{p_ge}--(iii) and Lemma \ref{l_delpino} implies
\begin{equation*}
\left| \int _{ B_R ^c } g(\varepsilon x, v_n) (v_n - v ) \dx \right| \leq\frac{1}{k_0} \left( \int _{B_R ^c} V(\varepsilon x ) |v_n| ^p  \dx \right)^{(p-1)/p} \left( \int _{B_R ^c} V(\varepsilon x)|v_n - v|^p \dx\right)^{1/p}<\delta,
\end{equation*}
for $R>0$ sufficiently large enough with $B_R \supset \Omega _\varepsilon$. Denoting $Y_n = \Omega _\varepsilon \cap [v_n \leq a_0]$ and in view of \eqref{local_comp} we get
\begin{align*}
\left| \int _{ B_R  } g(\varepsilon x, v_n) (v_n - v ) \dx \right| & \leq \int _{B_R \cap Y_n} (f(v_n) + |v_n| ^{p ^\ast -1})|v_n -v| \dx + o_n(1)\\
& \leq C \| v_n - v \| _{L ^{p ^\ast } (B_R)} + o_n(1) = o_n(1),
\end{align*}
where Lemma \ref{l_critconv} was applied.
\end{proof}
Clearly $o_n (1) = I ' _\varepsilon (v_n) \cdot v_n ^- = \|v_n ^- \|^p _{(\varepsilon)}$ and we have $v \geq 0$ a.e. in $\mathbb{R}^N.$ Summing up all the previous results we conclude:
\begin{proposition}
For any $\varepsilon >0, $ there exists $v_\varepsilon \in W^{1,p}(\mathbb{R}^N)$ a nontrivial nonnegative weak solution of the equation in \eqref{PF}, at the mountain pass level $I_\varepsilon (v_\varepsilon) = c(I_\varepsilon).$	
\end{proposition}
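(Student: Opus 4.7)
The plan for this proposition is essentially to synthesize the preceding sequence of lemmas into a standard mountain pass argument. First, Lemma \ref{l_geometry} verifies the mountain pass geometry of $I_\varepsilon$, and the remark following it shows that $c(I_\varepsilon)=\hat{c}(I_\varepsilon)$, so I can invoke the variant of the mountain pass theorem from \cite{elves2009} to produce a Palais--Smale sequence $(v_n)\subset X_\varepsilon$ at level $c(I_\varepsilon)$, meaning $I_\varepsilon(v_n)\to c(I_\varepsilon)$ and $\|I_\varepsilon'(v_n)\|_{X_\varepsilon^\ast}\to 0$. By Lemma \ref{l_psbounded} this sequence is bounded in $X_\varepsilon$, and since $X_\varepsilon$ is reflexive I may, up to a subsequence, assume $v_n\rightharpoonup v_\varepsilon$ in $X_\varepsilon$, with the local strong convergence properties recorded in \eqref{local_comp}.

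Next, I would invoke the unnamed lemma immediately preceding the proposition, which combines the local critical-growth convergence of Lemma \ref{l_critconv} with the uniform tail estimate of Lemma \ref{l_delpino} to upgrade this to $v_n\to v_\varepsilon$ strongly in $X_\varepsilon$. With strong convergence in hand, continuity of $I_\varepsilon$ and $I_\varepsilon'$ gives
\begin{equation*}
I_\varepsilon(v_\varepsilon)=\lim_{n\to\infty} I_\varepsilon(v_n)=c(I_\varepsilon),\qquad I_\varepsilon'(v_\varepsilon)\cdot\varphi=\lim_{n\to\infty}I_\varepsilon'(v_n)\cdot\varphi=0
\end{equation*}
for every $\varphi\in X_\varepsilon$, so $v_\varepsilon$ is a weak solution of \eqref{PF} at the mountain pass level. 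The inclusion $v_\varepsilon\in W^{1,p}(\mathbb{R}^N)$ is then automatic from the continuous embedding $X_\varepsilon\hookrightarrow W^{1,p}(\mathbb{R}^N)$ established in Proposition \ref{p_norma}.

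It remains to check nonnegativity and nontriviality. Nonnegativity is the observation already displayed just before the statement: since $g(\varepsilon x,t)=0$ for $t\leq 0$, testing $I_\varepsilon'(v_n)$ against $v_n^-\in X_\varepsilon$ leaves only the quadratic part, giving $\|v_n^-\|_{(\varepsilon)}^p=-I_\varepsilon'(v_n)\cdot v_n^-\to 0$, whence $v_\varepsilon\geq 0$ a.e. in $\mathbb{R}^N$. Nontriviality follows at once from Lemma \ref{l_geometry}(i): $I_\varepsilon(v_\varepsilon)=c(I_\varepsilon)\geq b>0=I_\varepsilon(0)$, forcing $v_\varepsilon\not\equiv 0$. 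Since all the heavy analytic lifting---the subcritical minimax estimate of Lemma \ref{l_minimax}, the concentration-compactness exclusion of bubbles in Lemma \ref{l_critconv}, and the tail decay of Lemma \ref{l_delpino}---has already been carried out, no genuine obstacle remains at this stage; the only minor point to verify is that the strong convergence lemma applies to the specific Palais--Smale sequence produced by the mountain pass theorem, which is immediate because that lemma is formulated for arbitrary PS sequences at level $c(I_\varepsilon)$.
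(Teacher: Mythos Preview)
Your proposal is correct and follows precisely the route taken in the paper: the proposition there is stated with the one-line justification ``Summing up all the previous results we conclude,'' and your write-up simply makes that summation explicit, invoking the same lemmas in the same order (mountain pass geometry, Palais--Smale sequence from \cite{elves2009}, boundedness, strong convergence in $X_\varepsilon$, then nonnegativity via the test function $v_n^-$ and nontriviality from $c(I_\varepsilon)>0$).
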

\subsection{Regularity and $L^\infty$ estimates}
Here we provide regularity results of the solutions $v_\varepsilon$ and also a uniform bound in the $L^\infty$--norm. In our argument we use a variant of the well known result due to H. Brezis and T. Kato \cite{breziskato,struwe}, which is usually applied for problems like \eqref{P} that involves a nonliterary with critical growth.
	\begin{lemma}\label{l_breziskato}
	Let $h:\mathbb{R}^N \times \mathbb{R} \rightarrow \mathbb{R}$ be a Carath\'{e}odory function satisfying the following property: for each $v \in W^{1,p}(\mathbb{R}^N)$ there is $a \in L^{N/p} (\mathbb{R}^N)$ such that
	\begin{equation}\label{estimativa}
		|h(x,v)| \leq (M_{h} + a(x))|v|^{p-1},\quad \text{in}\quad \mathbb{R}^N,
	\end{equation}
for some $M_h \geq 0.$ Then for any weak solution $v \in W^{1,p}(\mathbb{R}^N)$ of
\begin{equation}\label{eq_breziskato}
	- \Delta _p v = h(x,v)\quad \text{in}\quad \mathbb{R}^N,
\end{equation}
we have $v \in L ^{s }(\mathbb{R}^N),$ for any $s \in [p, \infty ).$ Furthermore, if instead $v \in W^{1,p}_{\loca}(\mathbb{R}^N),$ $a \in L^{N/p}_{\loca }(\mathbb{R}^N)$ and \eqref{estimativa} holds in balls, then $v \in L ^{s }_{\loca}(\mathbb{R}^N),$ for any $s \in [p, \infty ).$
	\end{lemma}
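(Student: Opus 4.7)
The plan is a Moser-type iteration combined with the Brezis--Kato splitting trick, adapted to the $p$-Laplacian. Starting from the Sobolev membership $v \in L^{p^\ast}(\mathbb{R}^N)$, each iteration step gains a multiplicative factor $p^\ast /p = N/(N-p)$ in integrability, so after finitely many steps any exponent $s \in [p,\infty)$ is reached.

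For the key one-step estimate I fix $\beta \geq 0$, $L > 0$, define $v_L := \min\{|v|,L\}$, and use the admissible test function $\varphi := v v_L^{p\beta} \in W^{1,p}(\mathbb{R}^N)$. A direct computation, splitting into $\{|v| \leq L\}$ and $\{|v| > L\}$, gives
\begin{equation*}
\int_{\mathbb{R}^N} |\nabla(v v_L^\beta)|^p \dx \leq \frac{(1+\beta)^p}{1+p\beta} \int_{\mathbb{R}^N} |\nabla v|^{p-2}\nabla v \cdot \nabla\varphi \dx,
\end{equation*}
the inequality following from Bernoulli's inequality $(1+\beta)^p \geq 1+p\beta$. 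Combining this with \eqref{estimativa}, the weak form of \eqref{eq_breziskato}, the pointwise bound $v_L \leq |v|$, and Sobolev's inequality $\mathbb{S}\| w \|_{p^\ast}^p \leq \|\nabla w\|_p^p$, I obtain
\begin{equation*}
\|v v_L^\beta\|_{p^\ast}^p \leq C_\beta \int_{\mathbb{R}^N} (M_h + a(x))(|v| v_L^\beta)^p \dx,
\end{equation*}
with $C_\beta := \mathbb{S}^{-1}(1+\beta)^p/(1+p\beta)$. To control the critical weight $a$, I split $a = a_1 + a_2$ with $a_2 \in L^\infty(\mathbb{R}^N)$ and $\|a_1\|_{N/p} \leq 1/(2C_\beta)$ (possible since $a \in L^{N/p}(\mathbb{R}^N)$). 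Hölder's inequality with conjugate exponents $N/p$ and $N/(N-p)$ (noting $p \cdot N/(N-p) = p^\ast$) then gives $C_\beta \int a_1 (|v|v_L^\beta)^p \dx \leq \tfrac{1}{2} \|v v_L^\beta\|_{p^\ast}^p$, which is absorbed on the left. Letting $L \to \infty$ and applying Fatou's lemma yields the inductive implication
\begin{equation*}
v \in L^{p(1+\beta)}(\mathbb{R}^N) \ \Longrightarrow \ v \in L^{p^\ast (1+\beta)}(\mathbb{R}^N),
\end{equation*}
and iterating with $s_0 = p^\ast $, $s_{n+1} = s_n N/(N-p)$ covers every finite exponent $s$.

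For the local version one replaces $\varphi$ by $\eta^p v v_L^{p\beta}$ with a cutoff $\eta \in C_0^\infty(\mathbb{R}^N)$ supported in a fixed ball and identically $1$ on a smaller one; the extra terms produced by $\nabla\eta$ are handled by Young's inequality and depend only on $\|\nabla \eta\|_\infty$ together with the local norms of $v$ and $a$ on the larger ball. The main technical obstacle is that the constant $C_\beta$ is not uniform in $\beta$: the splitting $a = a_1 + a_2$ must be rechosen at every iteration step, with $\|a_2\|_\infty$ allowed to blow up along the sequence. Since only qualitative $L^s$-membership is asserted (no uniform estimate is needed), a countable family of such splittings suffices and this causes no real difficulty.
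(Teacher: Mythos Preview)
Your proposal is correct and follows essentially the same route as the paper: a Moser iteration with truncated test functions of the form $v\cdot(\text{truncated power of }|v|)$, combined with the Brezis--Kato splitting of $a$ to absorb the critical-weight term, then bootstrapping by the factor $p^\ast/p$. The only cosmetic differences are that the paper truncates $\min\{|v|^s,L\}$ rather than $\min\{|v|,L\}^{p\beta}$, carries a general cutoff $\phi$ from the outset to treat the global and local cases simultaneously, and realizes the splitting of $a$ via the level sets $[|a|>l]$ instead of your density decomposition $a=a_1+a_2$; none of this affects the argument.
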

	\begin{proof}
		For $L,$ $s>0$ let us consider the real function $\psi_{L,s} (t) = t t_{L,s},$ where $t _{L,s} := \min \{ |t|^s , L  \}.$ Then $\psi'_{L,s}  \in L^\infty ([0, \infty )).$ Following, take $\phi \in C^\infty(\mathbb{R}^N:[0,1])$ with $\nabla \phi \in L^\infty (\mathbb{R}^N).$ Let us assume initially that $v \in W^{1,p}(\mathbb{R}^N).$ Clearly $vv^\theta  _{L,s} \phi^\theta \in W^{1,p}(\mathbb{R}^N),$ for $\theta \geq 1.$ Moreover,
		\begin{equation*}
		\nabla (v v^\theta _{L,s}\phi ^\theta ) =
		\left\{  
		\begin{aligned}
		&(s\theta +1)v^\theta  _{L,s}  \phi ^{\theta} \nabla v + \theta v v^{\theta}  _{L,s} \phi ^{\theta -1}\nabla \phi , &&\text{ if }|v|^s \leq L,\\
		&v^\theta _{L,s} \phi ^\theta \nabla v + \theta v v^\theta_{L,s}\phi ^{\theta -1} \nabla \phi , && \text{ if } |v|^s > L.
		\end{aligned}
		\right.
		\end{equation*}
		Taking $v v^p _{L,s}\phi^p $ as a test function for \eqref{eq_breziskato} we have
		\begin{align}
		\int _{\mathbb{R}^N} v_{L,s}^p  |\nabla v|^p \phi ^p \dx & + sp \int _{[|v|^s \leq L]} v_{L,s}^p  |\nabla v|^p \phi ^p \dx \nonumber  \\&= \int _{\mathbb{R}^N}h(x,v)(v v^p_{L,s} \phi ^p) \dx - p\int _{\mathbb{R}^N}v v_{L,s}^p \phi ^{p-1} |\nabla v|^{p-2} \nabla v \cdot \nabla \phi  \dx.\label{BK_um}
		\end{align}
		Now we estimate the right-hand side of \eqref{BK_um}. The first term can be estimated using \eqref{sobolev_constant}, \eqref{estimativa} and H\"{o}lder's inequality in the following way
		\begin{align}
		\int _{\mathbb{R}^N}h(x,v)(v v^p_{L,s} \phi ^p) \dx & \leq (M_h + l )\int _{\mathbb{R}^N} |v|^{(s+1)p} \phi ^p\dx + \gamma(l)\left(\int _{[|a(x)| > l ]}   |v v_{L,s} \phi |^{p^\ast}  \right)^{p/p^\ast}\nonumber\\ 
		& \leq (M_h + l )\int _{\mathbb{R}^N} |v|^{(s+1)p} \phi ^p\dx + \gamma (l) \mathbb{S}^{-1} \| \nabla (v v_{L,s} \phi ) \|^p_p,\label{BKum}
		\end{align}
		where $s$ is taken in a such way that $\int _{\mathbb{R}^N } |v|^{(s+1)p}\dx < +\infty$ and 
		\begin{equation*}
		\gamma(l) = \left( \int _{[ |a(x)| > l  ] } |a(x)|^{N/p}  \dx \right)^{p/N}  \rightarrow 0,\quad\text{as} \ l\rightarrow  + \infty.
		\end{equation*}
		One can use Young's inequality to estimate the second term in the right-hand side of \eqref{BK_um},
		\begin{equation}\label{BKdois}
		- p\int _{\mathbb{R}^N}v v_{L,s}^p \phi ^{p-1} |\nabla v|^{p-2} \nabla v \cdot \nabla \phi  \dx \leq p \delta \int _{\mathbb{R}^N } v_{L,s}^p |\nabla v|^p \phi ^p \dx + p C_\delta \int _{\mathbb{R}^N}  |v v_{L,s} \nabla \phi |^p \dx,
		\end{equation}
		where $\delta >0$ is such that $1-p\delta \geq 1/2.$ We use estimates \eqref{BKum} and \eqref{BKdois} in \eqref{BK_um} to get
		\begin{align}
\frac{1}{2}\int _{\mathbb{R}^N} v_{L,s}^p &|\nabla v|^p \phi ^p\dx  + sp \int _{[|v|^s \leq L] } v_{L,s}^p  |\nabla v|^p \phi ^p\dx\nonumber		\\ & \leq (M_h + l )\int _{\mathbb{R}^N} |v|^{(s+1)p} \phi ^p\dx + \gamma (l) \mathbb{S}^{-1} \| \nabla (v v_{L,s} \phi ) \|^p_p + p C_\delta \int _{\mathbb{R}^N}  |v v_{L,s} \nabla \phi |^p \dx  \label{KBtres}
		\end{align}
On the other hand,
\begin{align}
\int _{\mathbb{R}^N}|\nabla (vv_{L,s}\phi )|^p  \dx  \leq C_\ast & \left[\frac{1}{2}\int _{\mathbb{R}^N}  v_{L,s}^p |\nabla v|^p \phi ^p\dx \right.\nonumber\\
&\quad  \left. + sp \int _{[|v|^s \leq L] } v_{L,s}^p  |\nabla v|^p \phi ^p\dx	+ \int _{\mathbb{R}^N} |v v_{L,s} \nabla \phi |^p \dx \right],\label{BK_tres}
\end{align}
where $C_\ast = \max\{ 2^p (s+1)^p, 2^{p-1}/(sp) , 2^{p-1}  \}.$ Fixing $l>0$ such that $1- C_\ast \mathbb{S}^{-1} \gamma (l) > 0 $ and using \eqref{KBtres} in \eqref{BK_tres} we have
\begin{equation*}
(1-C_\ast \gamma (l) \mathbb{S}^{-1})  \| \nabla (v v_{L,s} \phi ) \|_p ^p\leq C_\ast \left[ (M_h + l)\int _{\mathbb{R}^N } |v|^{(s+1)p} \phi ^p\dx +(1+p C_\delta ) \int _{\mathbb{R}^N }|v|^{(s+1)p} |\nabla\phi|^p \dx \right].
\end{equation*}
Now we use \eqref{sobolev_constant} and apply Fatou's lemma by taking $L\rightarrow + \infty$ to conclude
\begin{align}
(1-C_\ast \gamma (l) \mathbb{S}^{-1}) & \mathbb{S} \left( \int _{\mathbb{R}^N}|v|^{(s+1)p^\ast}\phi^{p^\ast}   \dx \right) ^{p/p^\ast} \nonumber\\
& \leq C_\ast\left[ (M_h + l)\int _{\mathbb{R}^N } |v|^{(s+1)p} \phi ^p\dx + (1+p C_\delta )\int _{\mathbb{R}^N }|v|^{(s+1)p} |\nabla\phi|^p \dx \right].\label{combo1}
\end{align}
Notice that inequality \eqref{combo1} holds for any $\phi \in C^\infty(\mathbb{R}^N:[0,1])$ with $\nabla \phi \in L^\infty (\mathbb{R}^N).$ Furthermore, if instead $v \in W^{1,p}_{\loca}(\mathbb{R}^N),$ clearly \eqref{combo1} still holds whenever $\phi \in C^\infty _0(\mathbb{R}^N).$ In this case we can choose $s=s_1$ such that $(s_1+1)p=p^\ast$ to obtain, from \eqref{combo1}, that $|v|^{s_1+1} \phi \in L ^{p^\ast }(\mathbb{R}^N), $ for any $\phi \in C^\infty _0(\mathbb{R}^N).$ Hence $|v|^{s_1+1} \in L ^{p^\ast }(B_R), $ for a given $R>0.$ Now we take $s_2>0$ with $(s_2+1)p = (s_1 + 1)p^\ast,$ to get $|v|^{s_2+1}\phi \in L ^p (\mathbb{R}^N).$ Applying again \eqref{combo1} we see that $|v|^{s_2+1}\phi \in L^{p ^\ast }(\mathbb{R}^N),$ for all $\phi \in C^\infty _0 (\mathbb{R}^N).$ Thus $|v|^{s_2 +1 } \in L^{p^\ast}(B_R),$ for any $R>0.$ This leads to an iteration procedure in $s$ which implies that $v  \in L_{\loca}^{s_i} (\mathbb{R}^N),$ where $s_i = (p^\ast /p)^i - 1,$ for all $i \in \mathbb{N}.$ Conclusion that $v\in L^s_{\loca}(\mathbb{R}^N),$ follows by an interpolation inequality. If $v \in W^{1,p}(\mathbb{R}^N),$ then the same iteration argument applies by taking $\phi =1.$
\end{proof}
	Next we state a extension for the $p$--Laplacian operator of the well known regularity result due to C. O. Alves and M. A. Souto \cite[Proposition 2.6]{alvessouto2012} for solutions of semilinear problems. This result is typically applied on the study of equations like \eqref{P} ($\varepsilon=1$) in the presence of vanishing potentials.
	\begin{lemmaletter}\cite[Lemma 2.4]{liyang2013}\label{l_chines}
		Let $Q \in L^{s } (\mathbb{R}^N),$ $s >N/p,$ and $v \in W^{1,p}(\mathbb{R}^N)$ be a weak solution of the equation
		\begin{equation*}
		-\Delta _p v + b(x) |v|^{p-2}v =h_0(x,v),\quad \text{in}\quad \mathbb{R}^N,
		\end{equation*}
		where $b \geq 0$ is measurable and $h_0:\mathbb{R}^N \times \mathbb{R} \rightarrow \mathbb{R}$ is a continuous function satisfying
		\begin{equation*}
		|h_0(x,t)|\leq Q (x) |t|^{p-1},\quad \forall \, (x,t)\in \mathbb{R}^N\times \mathbb{R}.
		\end{equation*}
		Then $v \in L^\infty (\mathbb{R}^N)$ and there is  a constant $M = M (s,\| Q\| _s) >0$ such that $\| v \| _\infty \leq M \| v \| _{p^\ast }.$
	\end{lemmaletter}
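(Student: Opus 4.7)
The plan is a Moser iteration adapted to the weighted perturbation $Q|v|^{p-1}$, with $Q\in L^s(\mathbb{R}^N)$ and $s>N/p$. The first move is to fix $\beta\geq 1$ and a truncation level $L>0$, write $v_L:=\min\{|v|,L\}$, and test the equation with $\psi:=v\,v_L^{p(\beta-1)}\in W^{1,p}(\mathbb{R}^N)$. Computing $\nabla\psi$, separating the regions $[|v|\leq L]$ and $[|v|>L]$, and discarding the nonnegative contribution $\int b|v|^{p-2}v\psi\dx$ yields, after standard algebraic rearrangement, the Caccioppoli-type bound
\begin{equation*}
\int_{\mathbb{R}^N}|\nabla(|v|\,v_L^{\beta-1})|^p\dx\leq C\beta^p\int_{\mathbb{R}^N}Q(x)\,|v|^p v_L^{p(\beta-1)}\dx,
\end{equation*}
with $C$ independent of $\beta$ and $L$. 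Setting $W:=|v|v_L^{\beta-1}$, the Sobolev inequality \eqref{sobolev_constant} controls $\|W\|_{p^\ast}^p$ by $\mathbb{S}^{-1}\|\nabla W\|_p^p$, while H\"{o}lder with exponents $s$ and $s'=s/(s-1)$ bounds the right-hand side by $\|Q\|_s\|W\|_{ps'}^p$.

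The crucial structural input is $s>N/p$, which translates into $r:=ps'<p^\ast$. Letting $L\to\infty$ by monotone convergence, these inequalities combine into
\begin{equation*}
\|v\|_{p^\ast\beta}\leq (C\beta)^{1/\beta}\,\|v\|_{r\beta},
\end{equation*}
valid for every $\beta\geq 1$ with the right-hand side finite. Setting $\chi:=p^\ast/r>1$ and $\beta_k:=\chi^k$, one has $r\beta_{k+1}=p^\ast\beta_k$, so iterating from $\beta_0=1$ (using $v\in L^{p^\ast}(\mathbb{R}^N)$ as the base integrability guaranteed by Sobolev) yields
\begin{equation*}
\|v\|_{p^\ast\chi^{k+1}}\leq\prod_{j=0}^{k}\bigl(C\chi^{j+1}\bigr)^{\chi^{-(j+1)}}\|v\|_{p^\ast}.
\end{equation*}
Since $\chi>1$, the series $\sum_{j\geq 0}\chi^{-(j+1)}\log(C\chi^{j+1})$ converges, so the infinite product tends to a finite constant $M=M(s,\|Q\|_s)>0$. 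Passing $k\to\infty$ and using $\lim_q\|v\|_q=\|v\|_\infty$ delivers the desired estimate $\|v\|_\infty\leq M\|v\|_{p^\ast}$.

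The hardest part is the Caccioppoli step: producing the prefactor $C\beta^p$ rather than something like $e^{c\beta}$ requires tracking cancellations between the two summands of $\nabla(|v|\,v_L^{\beta-1})$, so that the factor $p(\beta-1)$ enters the gradient linearly and only polynomially after raising to the $p$-th power; if this scaling is off, the product in the iteration diverges and the conclusion fails. A secondary technical point is to verify that each step of the iteration is well initialised, i.e.\ that $v\in L^{p^\ast\beta_k}$ makes the right-hand side of the Caccioppoli inequality finite at the next level. This is handled by first working with truncations $v_L$ at finite $L$ (so every integrand is bounded), applying Fatou's lemma in the Sobolev step, and only then letting $L\to\infty$; Lemma \ref{l_breziskato} could also be invoked as an independent way to justify the base integrability used to start the induction.
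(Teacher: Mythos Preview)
The paper does not supply its own proof of this lemma: it is stated as an external result, cited verbatim from \cite[Lemma 2.4]{liyang2013}, and used as a black box in the proof of Proposition~\ref{p_linf}. So there is no ``paper's proof'' to compare against.

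Your Moser iteration is the standard and correct argument for this class of estimates (and is essentially what \cite{liyang2013} does). The key structural points are all present: the test function $v\,v_L^{p(\beta-1)}$ is admissible in $W^{1,p}(\mathbb{R}^N)$; the term $\int b|v|^{p}v_L^{p(\beta-1)}\dx\geq 0$ may indeed be dropped; the condition $s>N/p$ is exactly what forces $ps'<p^\ast$ and hence $\chi=p^\ast/(ps')>1$; and the polynomial-in-$\beta$ growth of the Caccioppoli constant makes $\sum_j \chi^{-j}\log(C\chi^j)$ convergent. One small remark on bookkeeping: your displayed product $\prod_{j=0}^k(C\chi^{j+1})^{\chi^{-(j+1)}}$ corresponds to the first iteration step being taken at $\beta=\chi$ (so that the input norm is $\|v\|_{r\chi}=\|v\|_{p^\ast}$), not at $\beta_0=1$ as the prose suggests; this is harmless but worth stating consistently. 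Also, the constant you obtain in the Caccioppoli inequality is actually of order $\beta^p/(1+p(\beta-1))\sim\beta^{p-1}$ rather than $\beta^p$; either bound suffices for convergence of the product, so this does not affect the conclusion.
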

	Notice that from the proof of Lemma \ref{l_psbounded} we have
	\begin{equation}\label{norma_est}
		\| v _ \varepsilon \|^p _{(\varepsilon)} \leq c(I_\varepsilon) \left[ \left(\frac{1}{p} - \frac{1}{\mu} \right) \left( 1 - \frac{1}{\kappa_0}\right) \right]^{-1},\quad \forall \ \varepsilon\in (0,1).
	\end{equation}
	Hence, by Lemma \ref{l_minimax}, a uniform bound for $(\|v_\varepsilon\|_{(\varepsilon)})_{0<\varepsilon < 1}$ is obtained. Particularly, 
	since $\mathbb{S} \| v_\varepsilon\|_{p^\ast }^p \leq  \| \nabla v_\varepsilon \|^p_p,$	
	\begin{equation}\label{salvation}
	\left(\int _{\mathbb{R}^N}|v_\varepsilon |^{p^\ast} \dx \right)^{p/p^\ast}< \frac{1}{N}\mathbb{S}^{(N-p)/p}\left[ \left(\frac{1}{p} - \frac{1}{\mu} \right) \left( 1 - \frac{1}{\kappa_0}\right) \right]^{-1},\quad \forall \ \varepsilon\in (0,1).
	\end{equation}
	Moreover, using Lemmas \ref{l_breziskato} and \ref{l_chines} we have the following conclusion, where condition $q_0>p$ in \ref{f_brutal} is necessary, when $\inf_{\xi \in \mathbb{R}^N}V(\xi) = 0$ (see \cite{byeon-wang2002,byeon-wang2003,lyberopoulos2018}).
		\begin{proposition}\label{p_linf}
		\begin{enumerate}[label=(\roman*)]
			\item $v_\varepsilon \in L^s (\mathbb{R}^N),$ for all $s \in [p, \infty ];$
			\item There is $M_\ast>0$ such that $\| v_\varepsilon \| _\infty \leq M_\ast,$ for all $\varepsilon \in (0,1);$
			\item $v_\varepsilon \in  C^{1,\alpha }_{\loca}(\mathbb{R}^N)$ and $v_\varepsilon >0$ in $\mathbb{R}^N$ for any $\varepsilon \in (0,1).$
		\end{enumerate}
	\end{proposition}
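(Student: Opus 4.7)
I plan a two-stage Moser-type bootstrap, followed by classical quasilinear regularity and the strong maximum principle. The starting point is that $v_\varepsilon$ solves $-\Delta_p v_\varepsilon+V(\varepsilon x)v_\varepsilon^{p-1}=g(\varepsilon x,v_\varepsilon)$ with $v_\varepsilon\in W^{1,p}(\mathbb{R}^N)\cap L^{p^\ast}(\mathbb{R}^N)$ and $\|v_\varepsilon\|_{p^\ast}$ uniformly controlled by~\eqref{salvation}.

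First I would apply Lemma~\ref{l_breziskato} with $h(x,t)=g(\varepsilon x,t)$ to upgrade $v_\varepsilon$ to $L^s(\mathbb{R}^N)$ for every finite $s\ge p$. Although Lemma~\ref{l_breziskato} is stated for $-\Delta_p v=h(x,v)$, its test-function proof extends verbatim to $-\Delta_p v+V(\varepsilon x)v^{p-1}=h(x,v)$ with $V\ge 0$: after testing with $v v_{L,s}^p\phi^p$ as in~\eqref{BK_um}, the extra summand $\int_{\mathbb{R}^N} V(\varepsilon x)|v|^p v_{L,s}^p\phi^p\dx$ is nonnegative and can be discarded from the left-hand side. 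The growth needed on $h$ is produced by Proposition~\ref{p_ge}(i) and \ref{f_brutal}(b):
\begin{equation*}
g(\varepsilon x,v_\varepsilon)\le\bigl(c_f\epsilon_0+c_f(C_0+1)v_\varepsilon^{p^\ast-p}\bigr)v_\varepsilon^{p-1},
\end{equation*}
so the required function $a(x)=c_f(C_0+1)v_\varepsilon^{p^\ast-p}$ lies in $L^{N/p}(\mathbb{R}^N)$ because $v_\varepsilon\in L^{p^\ast}(\mathbb{R}^N)$.

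Next I would invoke Lemma~\ref{l_chines}, taking $b(x)=V(\varepsilon x)$; in the subcase $\inf_\xi V(\xi)>0$, where \ref{f_brutal} permits only $q_0=p$, I would first shift a constant piece of the nonlinearity into the potential, setting $b(x)=V(\varepsilon x)-c_f\epsilon_0\ge 0$ for a sufficiently small choice of $\epsilon_0$. Combining Proposition~\ref{p_ge}(i) with \ref{f_brutal}(c), the nonlinearity obeys $|g(\varepsilon x,v_\varepsilon)|\le Q(x)v_\varepsilon^{p-1}$ with $Q(x)=c_1 v_\varepsilon^{q_0-p}(x)+c_2 v_\varepsilon^{p^\ast-p}(x)$ (the $v_\varepsilon^{q_0-p}$ summand disappearing when $q_0=p$ after the shift); by the first stage each factor lies in $L^s(\mathbb{R}^N)$ for some $s>N/p$, since $v_\varepsilon$ is in every $L^r$ with $r<\infty$. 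Lemma~\ref{l_chines} then yields $v_\varepsilon\in L^\infty(\mathbb{R}^N)$ with $\|v_\varepsilon\|_\infty\le M\|v_\varepsilon\|_{p^\ast}$; tracking the dependence of $M$, $\|a\|_{N/p}$ and $\|Q\|_s$ on $\|v_\varepsilon\|_{p^\ast}$ and applying~\eqref{salvation} supplies the $\varepsilon$-uniform constant $M_\ast$ asserted in~(ii) and completes~(i).

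Once $v_\varepsilon\in L^\infty(\mathbb{R}^N)$, the right-hand side of the equation belongs to $L^\infty_{\loca}(\mathbb{R}^N)$ using the local H\"older continuity of $V$, so DiBenedetto/Tolksdorf regularity for the $p$-Laplacian with bounded source yields $v_\varepsilon\in C^{1,\alpha}_{\loca}(\mathbb{R}^N)$. Positivity $v_\varepsilon>0$ then follows from V\'azquez's strong maximum principle applied to the nontrivial nonnegative continuous weak solution. The main obstacle I anticipate is this shift-into-the-potential device required in the subcase $\inf_\xi V(\xi)>0$ to conform with the $Q\in L^s(\mathbb{R}^N)$ hypothesis of Lemma~\ref{l_chines}; the remainder is bookkeeping to keep every constant independent of $\varepsilon\in(0,1)$.
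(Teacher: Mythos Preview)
Your plan is correct and follows the same two-stage strategy as the paper: a Brezis--Kato/Moser bootstrap via Lemmas~\ref{l_breziskato} and~\ref{l_chines}, followed by DiBenedetto regularity and a maximum principle. The main difference lies in part~(i). The paper moves $V(\varepsilon x)v^{p-1}$ to the right-hand side and writes $h(x,t)=g(\varepsilon x,t)-V(\varepsilon x)t^{p-1}$; since $V$ may be unbounded, the estimate~\eqref{estimativa} then holds only on balls, which forces a split argument: local integrability from Lemma~\ref{l_breziskato}, and a separate bootstrap on $B_R^c$ exploiting $V(\varepsilon x)-c_f\epsilon_0>0$ outside $\Omega_\varepsilon$. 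Your device of keeping $V\ge 0$ on the left and simply discarding the resulting nonnegative term after testing with $v v_{L,s}^p\phi^p$ is cleaner and yields global $L^s$-membership in a single pass (the test function lies in $X_\varepsilon$ because $|v v_{L,s}^p|\le L^p|v|$). For~(ii) and~(iii) the two arguments coincide, including the shift-into-the-potential trick for the case $\inf V>0$, which is exactly Remark~\ref{r_infpositivo} in the paper; the paper uses Trudinger's Harnack inequality rather than V\'azquez for positivity, but this is immaterial. One point you should make more explicit: the $\varepsilon$-uniform control of $\|Q\|_s$ does not reduce directly to $\|v_\varepsilon\|_{p^\ast}$, since $Q$ involves $v_\varepsilon^{q_0-p}$ and $v_\varepsilon^{p^\ast-p}$ in higher Lebesgue norms; you need each step of the bootstrap in~(i) to inherit a uniform constant from~\eqref{salvation}, and then interpolate between two such iterates, as the paper does via~\eqref{salva_inter}.
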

\begin{proof} We prove the case where $\inf_{\xi \in \mathbb{R}^N}V(\xi) = 0,$ see Remark \ref{r_infpositivo} below.

(i) First we notice that $v_\varepsilon$ is a weak solution of $- \Delta _p v_\varepsilon = h(x,v_\varepsilon)$ in $\mathbb{R}^N,$ where $h(x,t)=g(\varepsilon x, t ) - V(\varepsilon x) t ^{p-1}.$ By Remark \ref{r_growth}--(i)--(b) and Proposition \ref{p_ge}--(i) one can see that
\begin{equation*}
	|h( x,v)| \leq ((c_f\epsilon_0 + V(\varepsilon x)) + c_f(C_0 + 1) |v|^{p^\ast -p} )|v|^{p-1}\leq(M_0+a_0(x))|v|^{p-1}\quad\text{in}\quad B_R,
\end{equation*}
where $M_0 = c_f\epsilon_0 + \sup _{x \in B_R}V(\varepsilon x)$ and $a_0(x) =c_f(C_0 + 1) |v|^{p^\ast -p},$ for any $R>0,$ and $v \in W^{1,p}( \mathbb{R}^N).$ Since $a_0\in L^{N/p}(\mathbb{R}^N)$ we can apply Lemma \ref{l_breziskato} to obtain that $v_\varepsilon\in L^s_{\loca}(\mathbb{R}^N),$ for any $s \in [p, \infty).$ Next we prove that $v_\varepsilon \in L^s(B_R^c)$, for all $s \in [p, \infty)$ and $R>0.$ Let us apply Remark \ref{r_growth}--(i)--(b) and Proposition \ref{p_ge}--(i) once again to write
\begin{equation*}
	\int_{\mathbb{R}^N} |\nabla v_\varepsilon|^{p-2} \nabla v_\varepsilon \cdot \nabla \varphi +V(\varepsilon x) v_\varepsilon ^{p-1}\varphi \dx = \int _{\mathbb{R}^N} g(\varepsilon x,v_\varepsilon ) \varphi \dx \leq \int _{\mathbb{R}^N } c_f \left( \epsilon_0 v_\varepsilon^{p-1} + C_0 v_\varepsilon ^{p^\ast -1}  \right)\varphi \dx,
\end{equation*}
for any $\varphi \in X_\varepsilon$ with $\varphi \geq 0.$ Consequently,
\begin{equation*}
	\int_{\mathbb{R}^N} |\nabla v_\varepsilon|^{p-2} \nabla v_\varepsilon \cdot \nabla \varphi +(V(\varepsilon x) -  c_f \epsilon_0  )v_\varepsilon ^{p-1}\varphi \dx \leq c_f C_0 \int _{\mathbb{R}^N }  v_\varepsilon ^{p^\ast -1}  \varphi \dx.
\end{equation*}
Now we take $\varphi = v_\varepsilon (v_\varepsilon)^p _{L,s} \phi ^p$ as the test function in the proof of Lemma \ref{l_breziskato}, choosing $\phi\in C^\infty(\mathbb{R}^N:[0,1])$ with $\nabla \phi \in L^\infty (\mathbb{R}^N)$ and $R>0$ in a such way that 
\begin{equation}\label{propfi}
\phi =0 \text{ in } B_{R/2}\supset \Omega _\varepsilon.
\end{equation}
We also choose $\epsilon_0 = V_0/c_f,$ in order to obtain $V(\varepsilon x) -  c_f \epsilon_0  >0$ in $B^c_R$ (see \eqref{usa}) and so
\begin{equation*}
	\int_{\mathbb{R}^N} |\nabla v_\varepsilon|^{p-2} \nabla v_\varepsilon \cdot \nabla (v_\varepsilon (v_\varepsilon)^p _{L,s} \phi ^p) \dx \leq  \int _{\mathbb{R}^N } a_\varepsilon(x) v_\varepsilon^{p}(v_\varepsilon)^p _{L,s} \phi ^p\dx,
\end{equation*}
where $a_\varepsilon(x) = c_f C_0 v_\varepsilon ^{p^\ast - p }\in L^{N/p}(\mathbb{R}^N).$ From this point on, one can follow the same idea made on the proof of Lemma \ref{l_breziskato}, taking $h(x,t) = a_\varepsilon(x)|t|^{p-1},$ to conclude that inequality \eqref{combo1} holds with $v=v_\varepsilon$ and for any $\phi$ under \eqref{propfi}. Thus, the same argument holds: $v_\varepsilon ^{s_n+1} \phi  \in L^{p^\ast } (B^c_{R}),$ where $(s_n+1)p = (s_{n-1}+1)p^\ast,$ $s_0:=0,$ $\phi\in C^\infty(\mathbb{R}^N:[0,1]),$ under \eqref{propfi} with $\nabla \phi \in L^\infty (\mathbb{R}^N).$ In particular, $v_\varepsilon ^{s_n + 1} \in L^{p^\ast } (B^c_{R}),$ for any $R>0$ and $n \in \mathbb{N}.$ Hence $v_\varepsilon \in L ^s(\mathbb{R}^N),$ for any $s \in [p,\infty).$

(ii) We use part (i). It is clear that $v_\varepsilon$ is a weak solution of 
\begin{equation}\label{eq_ajuda}
-\Delta v_\varepsilon + b(x)v_\varepsilon^{p-1} = h_0 (x,v_\varepsilon)\quad\text{in}\quad\mathbb{R}^N,
\end{equation}
where $b(x) = V(\varepsilon x)$ and $h_0(x,t) = Q_\varepsilon (x)t^{p-1},$ for $t\geq 0,$ $h_0(x,t) = 0 ,$ for $t\leq 0,$ with
\begin{equation*}
Q_\varepsilon(x) =
\left\{  
\begin{aligned}
&\frac{g(x,v_\varepsilon)}{v_\varepsilon ^{p-1}}, &&\text{ if }v_\varepsilon (x) >0,\\
&0, && \text{ if } v_\varepsilon (x)=0.
\end{aligned}
\right.
\end{equation*}
By Remark \ref{r_growth}--(i)--(c) and Proposition \ref{p_ge}--(i) there are $C_1,$ $C_2>0$ such that
\begin{equation*}
Q_\varepsilon (x) \leq C_1 v_\varepsilon ^{q_0 - p } + C_2 v_\varepsilon ^{p^\ast - p }.
\end{equation*}
On the other hand, the iteration method used above in \eqref{combo1} together with \eqref{salvation}, leads to
\begin{equation}\label{salva_inter}
\int _{\mathbb{R}^N} |v_\varepsilon |^{(s_n +1)p^\ast}\dx \leq C_n,\quad   \forall \ \varepsilon\in (0,1),\ \forall \  n \in \mathbb{N},
\end{equation}
for some $C_n >0.$ Taking $k<m<n$ such that
\begin{equation*}
s_k > \max\{N/p,p  \}\quad \text{and}\quad s_k < (q_0 - p) s_m < (p^\ast - p)s_m < s_n,
\end{equation*}
we have, by an interpolation inequality and \eqref{salva_inter},
\begin{equation*}
\|v_\varepsilon \| _{(q_0-p)s_m} \leq \| v_\varepsilon \|^{1-\alpha}_{s_k} \| v_\varepsilon \|^{\alpha}_{s_n} \leq C'_{k,n} \quad\text{and} \quad\|v_\varepsilon \| _{(p^\ast-p)s_m} \leq \| v_\varepsilon \|^{1-\beta}_{s_k} \| v_\varepsilon \|^{\beta}_{s_n}\leq C''_{k,n},
\end{equation*}
for suitable $\alpha,$ $\beta \in [0,1]$ and any $\varepsilon \in (0,1).$ Thus $Q_\varepsilon \in L^s(\mathbb{R}^N),$ for $s=s_m >N/p$ and $\| Q_\varepsilon\|_s \leq C'''_{k,n},$ for all $\varepsilon \in (0,1).$ Applying Lemma \ref{l_chines}, we have $v_\varepsilon \in L^\infty (\mathbb{R}^N) $ and the existence of $M>0$ that does not depend on $\varepsilon$ such that $\| v_\varepsilon \| _\infty \leq M \| v_\varepsilon \| _{p^\ast }$ (see \cite[Proof of Lemma 2.4--(2.29)]{liyang2013}). Using \eqref{salvation} we have the uniform bound.

(iii) The fact that  $v_\varepsilon \in C^{1,\alpha }_{\loca} (\mathbb{R}^N)$ follows by the regularity results of \cite{dibenedetto1983}. Next, using part (ii) and that $v_\varepsilon$ satisfies \eqref{eq_ajuda}, one can apply \cite[Theorem 1.1]{trudinger1967} to conclude $v_\varepsilon >0$ in $\mathbb{R}^N.$
\end{proof}
\begin{remark}\label{r_infpositivo}In the case where $q_0 = p$ in hypothesis \ref{f_brutal}, we may take $V_0 = \inf _{x \in \mathbb{R}^N}V(x)>0.$ Next, we modify the proof of Proposition \ref{p_linf} by choosing $b(x)=V(\varepsilon x) - V_0/2\geq V_0/2 $ and $h_0(x,t) = (Q_\varepsilon (x)-V_0/2)t^{p-1}$ in \eqref{eq_ajuda} and following the proof of Lemma \ref{l_chines} by noticing that $g(\varepsilon x,v_\varepsilon ) - (V_0/2) v_\varepsilon ^{p-1} \leq C_0v_\varepsilon ^{q-1} + v_\varepsilon ^{p^\ast -1}$ ($\epsilon_0 = V_0/2$ in \ref{f_brutal}).
\end{remark}
	\subsection{Uniform local convergence}\label{s_uniform}
	Now let $(\varepsilon _n) \subset (0, \varepsilon_0)$ with $\varepsilon_n \rightarrow 0$ and $(y_n) \subset \mathbb{R}^N.$ In order to prove our main result, we are going to consider the next equation
 \begin{equation}\label{Ptrans}
     -\Delta_pw+V(\varepsilon_n(x+y_n))w^{p-1}=g(\varepsilon_n(x+y_n),w), \ w>0,\  \text{in} \ \mathbb{R}^{N},
 \end{equation}
Solutions of \eqref{Ptrans} are given by critical points of the following $C^1$ functional
	\begin{equation*}
	I_{\varepsilon _n, y_n} (w) = \frac{1}{p} \| \nabla w \| _p ^p + \int _{\mathbb{R}^N}V(\varepsilon_n (x+y_n))|w|^p\dx - \int _{\mathbb{R}^N} G(\varepsilon_n (x + y_n), w )\dx,\ w \in W^{1,p}(\mathbb{R}^N).
	\end{equation*}
Clearly $w_n(x) := v_{\varepsilon_n}(x+y_{n}) $ is a critical point of $I_{\varepsilon _n, y_n}.$ In our argument to prove our main result, we study the behavior of $w_n,$ as $n\rightarrow \infty.$ In this direction, we first notice that \eqref{norma_est} implies the existence of $w_0$ such that $w_n \rightharpoonup w_0$ in $D^{1,p} (\mathbb{R}^N),$ up to a subsequence. Also, if we assume the existence of $V_\ast : = \lim_n V(\varepsilon _n y_n),$ up to a subsequence, then $w_0 \in W^{1,p}(\mathbb{R}^N),$ provided that $V_\ast>0$ (Fatou's lemma). 
	\begin{proposition}\label{p_convergcomp}
		Suppose either: the existence of $\lim (\varepsilon _n y_n )=x_{\Omega };$ Or $V\in L^\infty (\mathbb{R}^N)$ and it is uniformly continuous. Then, in a subsequence, $(w_n)$ and $(\nabla w_n) $ converges to $w_0$ and $\nabla w_0$ uniformly on compact sets of $\mathbb{R}^N$, respectively. Furthermore, $ w_0\in D^{1,p}(\mathbb{R}^N) \cap L^\infty (\mathbb{R}^N)\cap C^{1,\alpha }_{\loca}(\mathbb{R}^N)$ and it is a nonnegative solution of the equation
		\begin{equation}\label{Ptrans0}
		-\Delta_pw+V_\ast \hspace{0.03cm}   w^{p-1}=g_\ast (w) \  \text{in} \ \mathbb{R}^{N},\\
		\end{equation}
		where $g_\ast  (t) = \mathcal{X}  ( f(t) + t^{p^\ast -1}) + (1 - \mathcal{X} ) f_\ast (t)$ for some $\mathcal{X} \in \mathbb{R},$ and $g_\ast  (t)=0,$ for $t \leq 0.$ Solutions of \eqref{Ptrans0} are precisely critical points of the $C^1$ functional
		\begin{equation*}
		\mathcal{I}_{\ast}(w) = \frac{1}{p} \| \nabla w \| _p ^p + \int _{\mathbb{R}^N} V_\ast  |w|^p\dx - \int _{\mathbb{R}^N}G_\ast(w) \dx,\quad w \in W_{V_\ast}
		\end{equation*}
		where $G_\ast(w) = \int _0 ^t g_\ast (\tau )\dtau$ with $W_{V_\ast} = W^{1,p}(\mathbb{R}^N),$ if $V_\ast >0,$ and $W_{V_\ast} = D^{1,p}(\mathbb{R}^N),$ if $V_\ast =0.$ Moreover, if $w_0$ is nontrivial, then $w_0>0.$
	\end{proposition}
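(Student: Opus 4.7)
The plan is to establish uniform a priori bounds on $w_n$, extract a locally convergent subsequence using quasilinear regularity theory, and then pass to the limit in the weak formulation, with some care in identifying the limiting nonlinearity $g_\ast$.

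First I would collect the uniform estimates. Estimate \eqref{norma_est} together with Lemma \ref{l_minimax} and the translation invariance of $\|\nabla \cdot\|_p$ give $(\nabla w_n)$ bounded in $L^p(\mathbb{R}^N)$; Proposition \ref{p_linf}(ii) gives $\|w_n\|_\infty \leq M_\ast$ uniformly in $n$. Hence $w_n \rightharpoonup w_0$ weakly in $D^{1,p}(\mathbb{R}^N)$ and pointwise a.e., with $w_0 \in L^\infty(\mathbb{R}^N)$ obeying the same bound. If $V_\ast>0$, Fatou's lemma applied to the uniformly bounded integrals $\int V(\varepsilon_n(x+y_n)) w_n^p \dx$ yields $w_0 \in L^p(\mathbb{R}^N)$, so $w_0 \in W^{1,p}(\mathbb{R}^N)$.

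Next I would identify the limiting coefficients. Under the first hypothesis $\varepsilon_n y_n \to x_\Omega$, continuity of $V$ gives $V(\varepsilon_n(x+y_n)) \to V(x_\Omega) =: V_\ast$ uniformly on compact sets of $x$. For the indicator $\mathcal{X}_\Omega(\varepsilon_n(x+y_n))$, if $x_\Omega \notin \partial \Omega$ it is eventually constant on any fixed compact set; passing to a further subsequence if needed (to handle the boundary case) yields a number $\mathcal{X} \in \{0,1\}$ with $g(\varepsilon_n(x+y_n), t) \to g_\ast(t)$ uniformly in $t$ on bounded sets and in $x$ on compact sets, where $g_\ast$ is exactly as in the statement. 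Under the second hypothesis, uniform continuity of $V$ gives $|V(\varepsilon_n(x+y_n)) - V(\varepsilon_n y_n)| \to 0$ uniformly on compact sets of $x$; extracting a subsequence so that $V(\varepsilon_n y_n) \to V_\ast$ and handling the indicator analogously produces the same description.

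With the uniform $L^\infty$ bound, the right-hand side of \eqref{Ptrans} is bounded in $L^\infty_{\loca}$ uniformly in $n$, so the quasilinear regularity theory of \cite{dibenedetto1983} yields a uniform $C^{1,\alpha}_{\loca}(\mathbb{R}^N)$ bound on $(w_n)$. Arzel\`a--Ascoli then produces, along a subsequence, local uniform convergence of both $w_n$ to $w_0$ and $\nabla w_n$ to $\nabla w_0$, and $w_0 \in C^{1,\alpha}_{\loca}(\mathbb{R}^N)$. Passing to the limit in the weak formulation of \eqref{Ptrans} with $\varphi \in C_0^\infty(\mathbb{R}^N)$, using $C^1$ convergence on $\supp \varphi$ together with dominated convergence, gives that $w_0$ solves \eqref{Ptrans0}; the identification of $\mathcal{I}_\ast$ as the energy functional on $W_{V_\ast}$ is routine, distinguishing $V_\ast>0$ and $V_\ast=0$. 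Nonnegativity passes from $w_n \geq 0$, and if $w_0 \not\equiv 0$ the strong maximum principle of \cite{trudinger1967} applied to \eqref{Ptrans0} gives $w_0>0$.

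The main obstacle is making the identification of $g_\ast$ fully rigorous when the accumulation point $x_\Omega$ (or the translates $\varepsilon_n y_n$ in the second hypothesis) lies near $\partial\Omega$: the discontinuous indicator $\mathcal{X}_\Omega$ need not stabilize a priori, so the subsequence argument has to be combined with the strong local convergence coming from \cite{dibenedetto1983} to ensure that $g(\varepsilon_n(x+y_n), w_n)\varphi$ converges to $g_\ast(w_0)\varphi$ in $L^1$ for every admissible test function.
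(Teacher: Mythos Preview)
Your proposal is correct and follows essentially the same route as the paper: uniform $L^\infty$ bounds from Proposition \ref{p_linf}, DiBenedetto's $C^{1,\alpha}_{\loca}$ estimates to get equiboundedness in $C^{1,\beta}(\overline{B})$, the compact embedding $C^{1,\beta}(\overline{B})\hookrightarrow C^1(\overline{B})$ (your Arzel\`a--Ascoli step) for local $C^1$ convergence, and then dominated convergence to pass to the limit in the weak formulation, with the strong maximum principle for positivity. Your discussion of the two alternative hypotheses and the boundary behavior of $\mathcal{X}_\Omega$ is in fact more explicit than the paper's own proof, which simply records that ``$\mathcal{X}$ depends on the behavior of $(\varepsilon_n y_n)$ and on its limits in subsequences.''
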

\begin{proof}
The compact embedding $D^{1,p}(\mathbb{R}^N) \hookrightarrow L ^s_{\loca}(\mathbb{R}^N),$ $1\leq s < p^\ast,$ implies $w_n(x) \rightarrow w_0(x),$ up to a subsequence. On the other hand, the regularity given by Proposition \ref{p_linf} guarantees that $w_0 \in L ^\infty (\mathbb{R}^N)$ with $\| w_0 \| _\infty \leq M_\ast.$ Given a ball $B,$ by \eqref{Ptrans}, we further apply the regularity results of \cite{dibenedetto1983} to ensure the existence of $C_0>0$ depending only upon $N,$ $p,$ $f,$ $B$ and $M_\ast$ such that
\begin{equation*}
	|\nabla w_{n}(x) - \nabla w_{n} (y) |\leq  C_0 |x- y |^\beta\quad \text{and} \quad\|\nabla w_n\|_{L^\infty (B)} \leq C_0,\quad  \text{for any}\quad x,\ y \in B\text{ and }n \in \mathbb{N}.
\end{equation*}
Consequently $(\| w_n \| _{C^{1,\beta }(\overline{B}) })$ is a bounded sequence and by the compact embedding $C^{1,\beta }(\overline{B})  \hookrightarrow C^1 (\overline{B})$ we conclude that $\| w_n - w_0\|_{C^1 (\overline{B})}\rightarrow 0,$ up to a subsequence. Using this convergence together with the fact that $C_0 ^\infty (\mathbb{R}^N)$ is dense in $X_\varepsilon,$ Proposition \ref{p_ge}--(i), \ref{f_brutal} and dominated convergence theorem, one can see that $w_0$ is a solution of \eqref{Ptrans0}, where $\mathcal{X}$ depends on the behavior of $(\varepsilon_ny_n)$ and on its limits in subsequences. The arguments used to prove Proposition \ref{p_linf} imply $w_0 \in C^{1,\alpha}_{\loca}(\mathbb{R}^N).$
\end{proof}
	
	\section{Proof of Theorem \ref{teo_doo}}\label{s_teodoo}
	In this section we apply the results of Section \ref{s_existence} assuming without loss of generality that $x_0 = 0$ ($V(0)>0$). In fact, one can just consider the translated problem in \eqref{P} replacing $V$ and $\Omega$ by $\hat{V}(x) = V(x+x_0)$ and $\hat{\Omega} = \Omega - x_0,$ respectively. Moreover, we repeat the same notation to indicate possible lesser values of $\varepsilon_0 \in (0,1)$. Next we relate the study of \eqref{PF} to the following limit equation, obtained by taking $\varepsilon \rightarrow 0,$
\begin{equation}\label{p_limit}
-\Delta_pu+V(0)u^{p-1}=f(u)+u^{p^\ast -1},\ u>0,\  \text{in} \ \mathbb{R}^{N},
\end{equation}
 whose solutions can be obtained by critical points of the functional 
	\begin{equation*}
	\mathcal{I}_0 (v) = \frac{1}{p} \int _{\mathbb{R}^N } |\nabla v|^p +V(0)|v|^p \dx - \int _{\mathbb{R}^N } F(v) + \frac{1}{p^\ast} (v^+)^{p^\ast}\dx,\ v \in W^{1,p}(\mathbb{R}^N).
	\end{equation*}
	We define the minimax level of $\mathcal{I}_0$ as $c(\mathcal{I}_0) := \inf _{\gamma \in \Gamma_{\mathcal{I}_0} } \sup _{t \geq 0} \mathcal{I}_0(\gamma (t)),$ where $\Gamma_{\mathcal{I}_0}$ is defined like \eqref{gamma}, replacing $I_\varepsilon$ by $\mathcal{I}_0,$ and $X_\varepsilon$ by $W^{1,p}(\mathbb{R}^N).$ It is well known the existence of a solution $u_0\in W^{1,p}(\mathbb{R}^N)\cap C^{1,\alpha }_{\loca}(\mathbb{R}^N)$ for \eqref{p_limit} such that
	\begin{equation}\label{minimax_est0}
	0<\mathcal{I}_0 (u_0) =\max_{t \geq 0}\mathcal{I}_0 (t u_0) = c(\mathcal{I}_0)< \frac{1}{N} \mathbb{S} ^{N/p},
	\end{equation}
	see \cite{wang-an-zhang2015}. Critical points of $\mathcal{I}_0$ satisfying \eqref{minimax_est0} can also be obtained by following similar arguments used in Section \ref{s_existence}. Our first step here is to obtain an estimate of the minimax level $c(I_\varepsilon)$ by means of $c(\mathcal{I}_0).$
	\begin{lemma}\label{l_est_j0}
		$	\limsup_{\varepsilon \rightarrow 0} c(I_{\varepsilon}) \leq c(\mathcal{I}_0).$ 
	\end{lemma}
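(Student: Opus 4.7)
The plan is to build a competitor path in $\Gamma_{I_\varepsilon}$ using a truncation of the ground state $u_0$ of the limit problem \eqref{p_limit}, so that for small $\varepsilon$ its support lies inside $\Omega_\varepsilon$ and the penalized nonlinearity $g(\varepsilon x, \cdot)$ coincides with $f(\cdot) + (\cdot)^{p^\ast-1}$ there. Since by \eqref{minimax_est0} we already know $\max_{t\ge 0}\mathcal{I}_0(tu_0)=c(\mathcal{I}_0)$, the whole argument reduces to showing that $\max_{t\ge 0} I_\varepsilon(tu_R)$ is close to $\max_{t\ge 0}\mathcal{I}_0(tu_R)$ and that the latter is close to $c(\mathcal{I}_0)$ when $R$ is large.

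Concretely, I would fix $\rho_0>0$ with $\overline{B}_{\rho_0}\subset\Omega$ and, for each $R>0$, a cutoff $\eta_R\in C_0^\infty(\mathbb{R}^N;[0,1])$ with $\eta_R=1$ on $B_R$ and $\supp(\eta_R)\subset B_{2R}$. Put $u_R:=\eta_R u_0$, which is compactly supported and hence lies in $X_\varepsilon$ for every $\varepsilon>0$. Whenever $\varepsilon<\rho_0/(2R)$, $\supp(u_R)\subset B_{2R}\subset \varepsilon^{-1}B_{\rho_0}\subset\Omega_\varepsilon$, so $g(\varepsilon x, tu_R)=f(tu_R)+(tu_R)^{p^\ast-1}$ on $\supp(u_R)$, and in particular $\|u_R^+\|_{L^{p^\ast}(\Omega_\varepsilon)}>0$. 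By Lemma \ref{l_geometry} the ray $t\mapsto tu_R$ belongs to $\Gamma_{I_\varepsilon}$, hence
\[
c(I_\varepsilon)\le \max_{t\ge 0} I_\varepsilon(tu_R).
\]

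For fixed $R$, as $\varepsilon\to 0$ the continuity of $V$ gives $V(\varepsilon x)\to V(0)$ uniformly on $\overline{B}_{2R}$, so a dominated convergence argument (integrands supported in $B_{2R}$ and controlled for $t\in[0,T]$ by $|u_R|^p$, $F(Tu_R)$ and $u_R^{p^\ast}$) yields $I_\varepsilon(tu_R)\to \mathcal{I}_0(tu_R)$ uniformly on bounded intervals of $t$. Since the coefficient of $t^{p^\ast}$ in $I_\varepsilon(tu_R)$ is $(1/p^\ast)\|u_R\|_{p^\ast}^{p^\ast}>0$ and the coefficient of $t^p$ is bounded uniformly in small $\varepsilon$, the maximizers $t_\varepsilon$ remain in a common bounded interval, giving $\max_{t\ge 0} I_\varepsilon(tu_R) \to \max_{t\ge 0} \mathcal{I}_0(tu_R)$. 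Finally $u_R\to u_0$ in $W^{1,p}(\mathbb{R}^N)$ as $R\to\infty$ by dominated convergence, so $\max_{t\ge 0}\mathcal{I}_0(tu_R)\to \max_{t\ge 0}\mathcal{I}_0(tu_0)=c(\mathcal{I}_0)$, and the diagonal choice (large $R$ first, then small $\varepsilon$) delivers the claim.

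The main technical point is the joint, uniform-in-$\varepsilon$ control of the one-dimensional maximum in $t$: one has to verify that truncation does not destroy the $p^\ast$-mass of the ansatz ($\|u_R\|_{p^\ast}\to\|u_0\|_{p^\ast}>0$) and that $V(\varepsilon\,\cdot\,)$ stays near $V(0)$ on the compact support of $u_R$, so that the maps $t\mapsto I_\varepsilon(tu_R)$ achieve their global maxima in a bounded interval independent of $\varepsilon$ and $R$ in a suitable regime. Once this uniform boundedness is secured, everything else reduces to continuous dependence and dominated convergence on a fixed compact set.
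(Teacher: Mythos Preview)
Your proposal is correct and follows the same overall strategy as the paper: truncate the ground state $u_0$ so that its support lies in $\Omega_\varepsilon$, use the ray $t\mapsto tu_R$ as a competitor in $\Gamma_{I_\varepsilon}$, and pass to the limit first in $\varepsilon$ and then in $R$.

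The only noteworthy difference is how the one-dimensional maximum is handled. The paper tracks the maximizers $t_{\varepsilon,R}$ explicitly through the first-order condition $\varphi'(t_{\varepsilon,R})=0$, extracts convergent subsequences $t_{\varepsilon_n,R_k}\to t_{R_k}\to t_0$, and then invokes hypothesis \ref{f_increasing} to identify $t_0=1$. Your route is softer: you argue that $I_\varepsilon(tu_R)\to\mathcal{I}_0(tu_R)$ uniformly on bounded $t$-intervals and that the maximizers stay in a fixed bounded interval (because $\|u_R\|_{p^\ast}$ is bounded below and the $t^p$-coefficient is bounded above), so the maxima themselves converge. This avoids any appeal to \ref{f_increasing} at this stage, since you only need $\max_{t\ge 0}\mathcal{I}_0(tu_0)=c(\mathcal{I}_0)$, which is already recorded in \eqref{minimax_est0}. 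Both arguments are valid; yours is marginally more economical in hypotheses, while the paper's makes the limiting maximizer explicit.
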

	\begin{proof}
		Our proof is inspired by \cite[Lemma 5.4]{ambrosio2020}. Let $\psi \in C^\infty (\mathbb{R}^N : [0,1])$ such that $\psi(x)=1,$  if $|x|\leq r/2,$ and $\psi (x) =0,$ if $|x| \geq r,$ where $r>0$ is such that $B_r \subset \Omega.$ For a given $0<R\leq \varepsilon^{-1}$ we define $u _R(x) = \psi(x/R)u_0(x).$ Clearly $u_R \rightarrow u_0$ in $W^{1,p}(\mathbb{R}^N).$ Lemma \ref{l_geometry} guarantees the existence of $t_{\varepsilon,R} >0$ such that 
		\begin{equation*}
		c(I_\varepsilon) \leq \max_{t \geq 0} I_\varepsilon (t u_R) = I_\varepsilon (t_{\varepsilon,R} u_R).
		\end{equation*}
		Hence $\varphi ' (t_{\varepsilon,R}) = 0,$ where $\varphi (t )= I_\varepsilon (t u_R)$ and
		\begin{equation}\label{eq_te}
		\| u_R\|_{(\varepsilon)} ^p = \int _{B_{Rr}} \frac{f( t_{\varepsilon,R} u_R) + (t_{\varepsilon,R} u_R)^{p^\ast - 1}}{(t_{\varepsilon,R} u_R) ^{p-1}} u^p_R \dx.
		\end{equation}
		Let $\varepsilon_n \rightarrow 0$ and $R_k \rightarrow \infty.$ Consider in \eqref{eq_te}, $\varepsilon = \varepsilon_n$ and $R=R_k.$ By dominated convergence theorem, up to a subsequence, we deduce that \eqref{eq_te} implies $0<t_{\varepsilon_n,R_k} \rightarrow t_{R_k} <+ \infty.$ We take the limit as $\varepsilon=\varepsilon_n \rightarrow 0$ in \eqref{eq_te} to obtain
		\begin{equation}\label{eq_erre}
		\| u_R\|_{(0)} ^p = \int _{B_{Rr}} \frac{f( t_{R} u_R) + (t_{R} u_R)^{p^\ast - 1}}{(t_{R} u_R) ^{p-1}} u^p_R \dx,
		\end{equation}
		with $R=R_k.$ Similarly, taking $t_R = t_{R_k}$ in \eqref{eq_erre} we have $0<t_{R_k} \rightarrow t_0 < + \infty ,$ up to a subsequence. We now take the limit $R=R_k \rightarrow \infty$ in \eqref{eq_erre} and use that $\mathcal{I}'_0(u_0) \cdot u_0 = 0 $ to get
		\begin{equation*}
		\int _{\mathbb{R}^N} \frac{f(t_0 u_0) + (t_0 u_0 )^{p^\ast -1}}{(t_{0} u_0) ^{p-1}} u^p_0  \dx = \| u_0\|_{(0)} ^p =  \int _{\mathbb{R}^N} \frac{f( u_0)   + u_0 ^{p^{\ast -1} }}{ u_0 ^{p-1}} u^p_0  \dx.
		\end{equation*}
		Using \ref{f_increasing} we see that $t_0 = 1.$ Summing up, since $(\varepsilon_n)$ and $(R_k)$ are arbitrary, we have our conclusion:
		\begin{equation*}
		\limsup _{\varepsilon \rightarrow 0}c(I_\varepsilon) \leq \limsup _{R \rightarrow \infty }\left[ \limsup _{\varepsilon \rightarrow 0} I_\varepsilon (t_{\varepsilon,R} u_R)\right] =  \mathcal{I}_0(u_0) = c(\mathcal{I}_0).\qedhere
		\end{equation*}
	\end{proof}
		\begin{proposition}\label{p_estbaixo}
		$\lim _{\varepsilon \rightarrow 0} c(I_\varepsilon) = c (\mathcal{I}_0).$
	\end{proposition}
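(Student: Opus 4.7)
The plan is to match the upper bound of Lemma~\ref{l_est_j0} with the complementary lower bound $\liminf_{\varepsilon \to 0} c(I_\varepsilon) \geq c(\mathcal{I}_0)$. Fix $\varepsilon_n \to 0^+$ and let $v_n = v_{\varepsilon_n}$ be the mountain pass critical points constructed in Section~\ref{s_existence}, so $I_{\varepsilon_n}(v_n) = c(I_{\varepsilon_n})$ and $I'_{\varepsilon_n}(v_n)=0$. The basic identity that drives everything is
\begin{equation*}
c(I_{\varepsilon_n}) \;=\; I_{\varepsilon_n}(v_n) - \tfrac{1}{p}\, I'_{\varepsilon_n}(v_n)\cdot v_n \;=\; \frac{1}{p}\int_{\mathbb{R}^N}\mathcal{G}(\varepsilon_n x, v_n)\,\dx ,
\end{equation*}
where the integrand is nonnegative by Proposition~\ref{p_ge}(iv); this opens the door to a Fatou-type argument.

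Next I would extract a nontrivial limiting profile. By \eqref{norma_est} and Proposition~\ref{p_linf} the sequence $(v_n)$ is uniformly bounded in norm and in $L^\infty$; since $c(I_{\varepsilon_n})\geq b>0$ by Lemma~\ref{l_geometry}, vanishing is excluded, and Lions' lemma yields $y_n \in \mathbb{R}^N$ and $\alpha>0$ with $\int_{B_1(y_n)} v_n^{p_0}\,\dx \geq \alpha$. Put $w_n(x) := v_n(x+y_n)$. I would then show $\varepsilon_n y_n$ stays (along a subsequence) within $\overline{\Omega}$: if $\mathrm{dist}(\varepsilon_n y_n,\Omega)\not\to 0$, then for each fixed $x$, eventually $\varepsilon_n(x+y_n)\notin \Omega$ so $g(\varepsilon_n(x+y_n),t)\leq (V_0/\kappa_0)t^{p-1}$; passing to the limit via Proposition~\ref{p_convergcomp} and testing against $w_0$ gives $\int|\nabla w_0|^p + (1-1/\kappa_0)V_0\int |w_0|^p \leq 0$, hence $w_0\equiv 0$, a contradiction. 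Therefore $\varepsilon_n y_n \to x_\infty \in \overline{\Omega}$ and Proposition~\ref{p_convergcomp} gives $w_n\to w_0 \not\equiv 0$ locally uniformly, with $w_0\in W^{1,p}(\mathbb{R}^N)\cap C^{1,\alpha}_{\loca}(\mathbb{R}^N)$ a positive solution of the autonomous problem $-\Delta_p w + V(x_\infty) w^{p-1} = f(w)+w^{p^*-1}$.

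Translating the identity for $c(I_{\varepsilon_n})$ by $y_n$ and applying Fatou's lemma gives
\begin{equation*}
\liminf_{n\to\infty} c(I_{\varepsilon_n}) \;\geq\; \frac{1}{p}\int_{\mathbb{R}^N} \mathcal{G}_{V(x_\infty)}(w_0)\,\dx \;=\; \mathcal{I}_{V(x_\infty)}(w_0) \;\geq\; c(\mathcal{I}_{V(x_\infty)}),
\end{equation*}
where in the middle equality I use that $w_0$ is a critical point of the limit autonomous functional $\mathcal{I}_{V(x_\infty)}$, and the last inequality is the mountain pass characterization (which, under \ref{f_increasing}, agrees with the Nehari infimum, as in Lemma~\ref{l_geometry}).

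The \emph{main obstacle} is the closing step $c(\mathcal{I}_{V(x_\infty)})\geq c(\mathcal{I}_0)$, equivalently $V(x_\infty)\geq V(0)$, which is needed to feed into the monotonicity $V\leq V' \Rightarrow c(\mathcal{I}_V)\leq c(\mathcal{I}_{V'})$ (immediate from $\mathcal{I}_V(u)\leq \mathcal{I}_{V'}(u)$ for $u\geq 0$). To locate the concentration at $x_0=0$, I would take $y_n$ to be a global maximum point of $v_n$ (well defined by Proposition~\ref{p_linf}(iii) plus the decay estimate yielding $v_n(x)\to 0$ as $|x|\to\infty$) and combine three ingredients: the upper bound $\limsup c(I_{\varepsilon_n})\leq c(\mathcal{I}_0)$ from Lemma~\ref{l_est_j0}, hypothesis \ref{V_delpinofelmer} which forbids $x_\infty\in\partial\Omega$ (where $V$ is strictly larger than $V(0)$) together with the maximum-point information on $w_0$, and the strict monotonicity of $V\mapsto c(\mathcal{I}_V)$ that follows from \ref{f_increasing} and \ref{f_porbaixo}. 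This forces $V(x_\infty)=V(0)$ and closes the inequality, yielding $\lim c(I_\varepsilon)=c(\mathcal{I}_0)$.
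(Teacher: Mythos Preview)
Your overall scheme --- the identity $c(I_{\varepsilon_n}) = \frac{1}{p}\int_{\mathbb{R}^N}\mathcal{G}(\varepsilon_n x, v_n)\,\dx$ followed by Fatou --- matches the paper, but the paper takes a decisive shortcut you miss: it does \emph{not} translate. Recall that the paper fixed $0\in\Omega$ (Section~\ref{s_preli}) and arranged $x_0=0$; it then simply takes $y_n=0$, so $\varepsilon_n y_n\to 0$ and Proposition~\ref{p_convergcomp} yields a weak limit $v_0$ solving the autonomous problem \emph{directly at} $V(0)$. The Fatou step then reads $\liminf c(I_{\varepsilon_n})\ge \mathcal{I}_0(v_0)\ge c(\mathcal{I}_0)$, and your ``main obstacle'' never appears.

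By translating (to secure nontriviality of the limit), you trade one difficulty for another, and your resolution of the new one has a genuine gap. From your argument you get $x_\infty\in\overline{\Omega}$, and the sandwich $c(\mathcal{I}_{V(x_\infty)})\le \liminf c(I_{\varepsilon_n})\le\limsup c(I_{\varepsilon_n})\le c(\mathcal{I}_0)$ together with strict monotonicity of $V\mapsto c(\mathcal{I}_V)$ gives only $V(x_\infty)\le V(0)$ --- not $\ge$. None of the ingredients you list (maximum points of $v_n$, \ref{V_delpinofelmer}, maximum-point information on $w_0$) excludes $x_\infty\in\mathcal{O}\subset\Omega$, where $V=0<V(0)$; hypothesis \ref{V_delpinofelmer} controls $V$ on $\partial\Omega$, not in the interior. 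In that scenario your Fatou bound $\liminf c(I_{\varepsilon_n})\ge c(\mathcal{I}_{V(x_\infty)})$ is strictly weaker than the required $\ge c(\mathcal{I}_0)$ and the argument does not close. (Note also that the decay of $v_n$ you invoke to define global maxima, as well as the later localisation of $y_\varepsilon$ in $\mathcal{C}_\Omega$, are proved in the paper only \emph{after} Proposition~\ref{p_estbaixo}, using it; so those routes are circular here.) The paper's choice $y_n=0$ sidesteps all of this by pinning the limiting potential to $V(0)$ from the outset.
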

	\begin{proof}
		Take any $\varepsilon _n \rightarrow 0.$ Estimate \eqref{norma_est} yields $v_{\varepsilon_n} \rightharpoonup v_0$ in $D^{1,p}(\mathbb{R}^N),$ for some $v_0,$ up to a subsequence. Moreover, by the results of Section \ref{s_uniform} and Proposition \ref{p_convergcomp}, $v_0 \in W^{1,p}(\mathbb{R}^N)$ and it is a solution of \eqref{p_limit}. On the other hand, some arguments in the proof of Lemma \ref{l_geometry} together with hypothesis \ref{f_increasing} leads to 
		\begin{equation*}
		c(\mathcal{I}_0) \leq \max _{t \geq 0} \mathcal{I}_0 (t v_0) = \mathcal{I}_0 (v_0).
		\end{equation*}
		Nevertheless, by Proposition \ref{p_ge}--(iv), we can apply Fatou's lemma to get
		\begin{align*}
		\liminf_{n \rightarrow \infty} c(I_{\varepsilon_n}) = \liminf_{n \rightarrow \infty} I_{\varepsilon_n} ( v_{\varepsilon _n}) &= \liminf_{n \rightarrow \infty} \left[\frac{1}{p} \int _{\mathbb{R}^N} g(\varepsilon _n x, v_{\varepsilon _n} )v_{\varepsilon _n} - p G(\varepsilon_n x, v_{\varepsilon _n})   \dx\right]\\
		&\geq \frac{1}{p} \int _{\mathbb{R}^N} g( 0 , v_0 )v_0 - p G(0, v_0)   \dx = \mathcal{I}_0(v_0) \geq  c(\mathcal{I}_0).
		\end{align*}		
		Conclusion follows by the fact that $\varepsilon _n \rightarrow 0$ is arbitrary and Lemma \ref{l_est_j0}.
	\end{proof}
We point out that Proposition \ref{p_estbaixo} implies the existence of $\varepsilon_0 \in (0,1)$ and $C>0$ such that $\| v_\varepsilon \| _{(\varepsilon)} > C>0,$ for any $\varepsilon \in (0, \varepsilon _0).$ In fact, assuming the existence of $\varepsilon_n \rightarrow 0$ with $\| v_{\varepsilon _n}\| _{(\varepsilon _n)}=o_n(1),$ we have by Proposition \ref{p_ge}--(iv), $o_n(1) = \| v_{\varepsilon _n} \|^p_{(\varepsilon _n)} \geq p \int _{\mathbb{R}^N} G(\varepsilon_n x , v_{\varepsilon _n})\dx,$ leading to the contradiction with \eqref{minimax_est0}: $c(I_{\varepsilon_n}) = I ( v_{\varepsilon _n}) \rightarrow 0.$
	\begin{lemma}\label{l_lionzinho}
		There are $R_0,$ $\beta _0 >0$ and $(\hat{y}_\varepsilon)_{0<\varepsilon < \varepsilon _0} \subset \mathbb{R}^N$ such that
		\begin{equation*}
		\int _{B_{R_0} (\hat{y}_\varepsilon)} v_\varepsilon ^p \dx \geq \beta _0 > 0,\ \forall \, \varepsilon \in (0, \varepsilon_0).
		\end{equation*}
	\end{lemma}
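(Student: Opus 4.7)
The plan is to argue by contradiction using Lions' vanishing lemma combined with the uniform energy estimates already established. Suppose the conclusion fails; then by a straightforward diagonal selection one produces a sequence $(\varepsilon_n)\subset(0,\varepsilon_0)$ along which
\begin{equation*}
\sup_{y\in\mathbb{R}^N}\int_{B_R(y)}v_{\varepsilon_n}^p\,dx\to 0\quad\text{for every fixed }R>0.
\end{equation*}
Since $(v_{\varepsilon_n})$ is uniformly bounded in $W^{1,p}(\mathbb{R}^N)$ by \eqref{norma_est} together with Proposition \ref{p_norma}, the classical vanishing lemma of Lions \cite{lions_limitcase1} would then yield $v_{\varepsilon_n}\to 0$ in $L^s(\mathbb{R}^N)$ for every $p<s<p^\ast$.

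Next I would exploit the Nehari identity $I'_{\varepsilon_n}(v_{\varepsilon_n})\cdot v_{\varepsilon_n}=0$ to produce an incompatibility with the lower bound $\|v_{\varepsilon_n}\|_{(\varepsilon_n)}\geq C_0>0$ furnished by the remark following Proposition \ref{p_estbaixo}. Splitting the integral over $\Omega_{\varepsilon_n}$ and $\Omega_{\varepsilon_n}^c$, applying Proposition \ref{p_ge}--(iii) on the latter to absorb a factor $1/\kappa_0$, and using \ref{f_brutal} together with the $L^s$-vanishing on the former to discard the subcritical part (the threshold case $q_0=p$ being absorbed via $\inf V>0$), one arrives at
\begin{equation*}
\Bigl(1-\tfrac{1}{\kappa_0}-\tfrac{\epsilon_0}{V_0}\Bigr)\|v_{\varepsilon_n}\|_{(\varepsilon_n)}^p\leq o(1)+\int_{\Omega_{\varepsilon_n}}v_{\varepsilon_n}^{p^\ast}\,dx.
\end{equation*}
Combining with the Sobolev inequality $\|v_{\varepsilon_n}\|_{p^\ast}^{p^\ast}\leq\mathbb{S}^{-p^\ast/p}\|v_{\varepsilon_n}\|_{(\varepsilon_n)}^{p^\ast}$ and the positive lower bound on $\|v_{\varepsilon_n}\|_{(\varepsilon_n)}$ produces, after elementary manipulation,
\begin{equation*}
\liminf_{n\to\infty}\|v_{\varepsilon_n}\|_{(\varepsilon_n)}^p\geq \mathbb{S}^{N/p}\bigl(1-\tfrac{1}{\kappa_0}-\tfrac{\epsilon_0}{V_0}\bigr)^{(N-p)/p}.
\end{equation*}

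To close the argument I would then evaluate the energy through the Nehari rewriting
\begin{equation*}
I_{\varepsilon_n}(v_{\varepsilon_n})=\tfrac{1}{N}\|v_{\varepsilon_n}\|_{(\varepsilon_n)}^p+\int_{\Omega_{\varepsilon_n}}\Bigl[\tfrac{1}{p^\ast}f(v_{\varepsilon_n})v_{\varepsilon_n}-F(v_{\varepsilon_n})\Bigr]\dx+\int_{\Omega_{\varepsilon_n}^c}\Bigl[\tfrac{1}{p^\ast}f_\ast(v_{\varepsilon_n})v_{\varepsilon_n}-F_\ast(v_{\varepsilon_n})\Bigr]\dx.
\end{equation*}
Hypothesis \ref{f_AR} with $\mu\leq p^\ast$ renders the first integrand nonnegative, while Proposition \ref{p_ge}--(ii) applied on $[v_{\varepsilon_n}\leq a_0]\cap\Omega_{\varepsilon_n}^c$ combined with Proposition \ref{p_ge}--(iii) on $[v_{\varepsilon_n}>a_0]\cap\Omega_{\varepsilon_n}^c$ controls the last integrand from below by a term of order $-O(1/\kappa_0)\|v_{\varepsilon_n}\|_{(\varepsilon_n)}^p+o(1)$. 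Hence $I_{\varepsilon_n}(v_{\varepsilon_n})\geq\tfrac{1}{N}\mathbb{S}^{N/p}(1-O(1/\kappa_0)-O(\epsilon_0/V_0))+o(1)$, which for $\kappa_0$ sufficiently large and $\epsilon_0$ sufficiently small (both fixed at the outset of the penalization construction) contradicts $c(I_{\varepsilon_n})\to c(\mathcal{I}_0)<\mathbb{S}^{N/p}/N$ from Proposition \ref{p_estbaixo} and Lemma \ref{l_minimax}.

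The most delicate step I anticipate is the bookkeeping of the error term on $\Omega_{\varepsilon_n}^c\cap[v_{\varepsilon_n}>a_0]$, where hypothesis \ref{f_AR} is unavailable and the only quantitative tool is the quenching inequality $f_\ast(t)\leq(V_0/\kappa_0)t^{p-1}$. The key is to verify that the resulting negative contribution is bounded by a small multiple of $\|v_{\varepsilon_n}\|_{(\varepsilon_n)}^p$ plus vanishing terms, uniformly in $n$; this in turn forces $\kappa_0$ and $\epsilon_0$ to be locked in at the very beginning of the penalization scheme, a delicate but standard point in arguments of del Pino--Felmer type.
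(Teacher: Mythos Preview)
Your overall strategy---contradiction via Lions' vanishing lemma, then extracting a lower bound incompatible with the strict energy inequality $c(\mathcal{I}_0)<\frac{1}{N}\mathbb{S}^{N/p}$---is the same as the paper's. Two points deserve comment.

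\textbf{A concrete error.} The claim that ``Hypothesis \ref{f_AR} with $\mu\leq p^\ast$ renders the first integrand nonnegative'' is false: from $\mu F(t)\leq tf(t)$ you get $\tfrac{1}{p^\ast}f(t)t-F(t)\geq(\tfrac{1}{p^\ast}-\tfrac{1}{\mu})f(t)t$, and since $\mu\leq p^\ast$ this lower bound is \emph{nonpositive} (take $f(t)=t^{q-1}$ with $p<q<p^\ast$ for a counterexample). The repair is immediate---by the $L^s$-vanishing for $p<s<p^\ast$ and \ref{f_brutal}, both $\int f(v_{\varepsilon_n})v_{\varepsilon_n}$ and $\int F(v_{\varepsilon_n})$ are $o(1)$, so that integral contributes $o(1)$ rather than a sign---but the stated reason is wrong.

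\textbf{Comparison with the paper's argument.} Your bookkeeping yields $c(\mathcal{I}_0)\geq\tfrac{1}{N}(1-\tfrac{1}{\kappa_0}-O(\epsilon_0))^{N/p}\mathbb{S}^{N/p}$, which only contradicts $c(\mathcal{I}_0)<\tfrac{1}{N}\mathbb{S}^{N/p}$ for $\kappa_0$ large enough (depending on the gap). You are right that $\kappa_0$ could be fixed large at the outset since $c(\mathcal{I}_0)$ is independent of it, but this is not the standard del Pino--Felmer convention and retroactively constrains the construction. The paper avoids this entirely: it introduces the single quantity
\[
l_0=\lim_n\Bigl(\|v_{\varepsilon_n}\|_{(\varepsilon_n)}^p-\tfrac{V_0}{\kappa_0}\textstyle\int_{Y_n^c}v_{\varepsilon_n}^p\Bigr)=\lim_n\textstyle\int_{Y_n}v_{\varepsilon_n}^{p^\ast},
\]
where $Y_n=\Omega_{\varepsilon_n}\cup[v_{\varepsilon_n}\leq a_0]$, and shows directly that $\mathbb{S}\,l_0^{p/p^\ast}\leq l_0$ (via Sobolev and $\|\nabla v_{\varepsilon_n}\|_p^p\leq\|v_{\varepsilon_n}\|_{(\varepsilon_n)}^p-\tfrac{V_0}{\kappa_0}\int_{Y_n^c}v_{\varepsilon_n}^p$) and $\tfrac{1}{N}l_0\leq c(\mathcal{I}_0)$ (from the energy identity). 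The $\kappa_0$-terms cancel exactly, giving $\tfrac{1}{N}\mathbb{S}^{N/p}\leq c(\mathcal{I}_0)$ for \emph{any} $\kappa_0>1$. The gain is a clean sharp contradiction with no parameter tuning; your route works but at the cost of an extra structural hypothesis.
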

	\begin{proof}
		Let us argue by a contradiction argument and suppose the existence of $\varepsilon_n \rightarrow 0$ such that
		\begin{equation*}
		\lim _{n \rightarrow \infty}  \sup _{x \in \mathbb{R}^N} \int _{B_R(x)} v_{\varepsilon_n } ^p  \dx =0,\ \forall \, R>0.
		\end{equation*}
		By Lion's compactness lemma \cite[Lemma I.1]{lionslocalcompII}, we have $v_{\varepsilon _n} \rightarrow 0$ in $L ^\theta (\mathbb{R}^N)$ for any $p < \theta <p^\ast.$ Therefore, by \ref{f_brutal},
		\begin{equation*}
		\int _{\mathbb{R}^N} f(v_{\varepsilon _n})v_{\varepsilon _n}\dx = \int _{\mathbb{R}^N} F(v_{\varepsilon _n}) \dx = o_n (1).
		\end{equation*}
		Using this convergence and Proposition \ref{p_ge}--(i) we obtain the following estimate
		\begin{equation}\label{estesp_um}
		\int _{\mathbb{R}^N}   G(\varepsilon _n x, v_{\varepsilon _n})\dx \leq \frac{1}{p^\ast } \int _{Y_n } v_{\varepsilon _n} ^{p^\ast } \dx + \frac{V_0}{p \kappa_0}\int _{Y^c_n} v_{\varepsilon _n} ^p \dx + o_n(1),
		\end{equation}
		where $Y_n = \Omega _{\varepsilon _n} \cup [v_{\varepsilon _n} \leq  a_0].$ On the other hand, from $I'_{\varepsilon _n} (v_{\varepsilon _n} ) \cdot v_{\varepsilon _n} = 0,$ we have
		\begin{equation}\label{estesp_dois}
		\| v_{\varepsilon _n} \| _{(\varepsilon _n)} ^p - \frac{V_0}{\kappa_0} \int _{Y_n ^c} v^p _{\varepsilon _n}\dx + o_n (1) = \int _{Y_n} v_{\varepsilon _n}^{p^\ast}\dx.
		\end{equation}
		Since $(\| v_{\varepsilon _n}\|^p_{(\varepsilon _n)})$ is bounded, by Sobolev inequality, clearly the right-hand side of \eqref{estesp_dois} is bounded, and we can take the limit
		\begin{equation*}
		l_0:= \lim_{n \rightarrow \infty} \left[ \| v_{\varepsilon _n} \| _{(\varepsilon _n)} ^p - \frac{V_0}{\kappa_0} \int _{Y_n ^c} v^p _{\varepsilon _n}\dx \right]=\lim_{n \rightarrow \infty}\int _{Y_n} v_{\varepsilon _n}^{p^\ast}\dx\geq 0,
		\end{equation*}
		up to a subsequence. Nevertheless, \eqref{usa} yields
		\begin{equation}\label{eq_faltou}
		\| v_{\varepsilon _n} \| _{(\varepsilon _n)} ^p - \frac{V_0}{\kappa_0} \int _{Y_n ^c} v^p _{\varepsilon _n}\dx \geq \| \nabla v_{\varepsilon _n} \|^p _p + V_0 \left(1 - \frac{1}{\kappa_0}\right) \int _{Y_n^c}v _{\varepsilon _n}^p \dx + \int _{Y_n } V(\varepsilon _n x ) v_{\varepsilon _n} ^p\dx,
		\end{equation}
		Consequently, if $ l_0= 0, $ then $\| \nabla v_{\varepsilon _n} \|_{p} = o_n (1)$ and we have $\|  v_{\varepsilon _n} \|_{p^\ast} = o_n(1).$ Thus
		\begin{equation*}
		\| v_{\varepsilon _n} \| _{\varepsilon _n}^p = \int _{\mathbb{R}^N} g(\varepsilon _n x, v_{\varepsilon _n} )v_{\varepsilon _n} \dx = o_n(1),
		\end{equation*}
		which is a contradiction with the fact that $\|v_\varepsilon\|_{(\varepsilon)}>C>0$ as above. On the other hand, by using Proposition \ref{p_estbaixo} and \eqref{estesp_um} in $I'(v_{\varepsilon _n}) \cdot v_{\varepsilon _n} =0,$ we obtain
		\begin{equation}\label{estesp_tres}
		\| v_{\varepsilon _n} \|_{\varepsilon _n}^p - \frac{V_0}{\kappa_0} \int _{Y_n ^c} v_{\varepsilon _n} ^p \dx - \frac{p}{p^\ast} \int _{ Y_n } v_{\varepsilon_n} ^{p ^\ast } \dx  \leq p c(\mathcal{I}_0)+o_n (1).
		\end{equation}
		Taking the limit as $n \rightarrow \infty$ in \eqref{estesp_tres} we have
		\begin{equation}\label{contrad1}
		\frac{1}{N}l_0 =\frac{1}{p}\left( 1- \frac{p}{p ^\ast} \right)l_0  \leq  c(\mathcal{I}_0).
		\end{equation}
		We now use \eqref{contrad1} to complete the proof. Indeed, using \eqref{usa} and \eqref{eq_faltou}, one have
		\begin{equation}\label{est_difi}
		\mathbb{S}  \left(  \int _{Y_n} v_{\varepsilon _n}^{p^\ast}\dx \right)^{p/p^\ast}\leq \mathbb{S}  \| v_{\varepsilon_n } \| _{p^\ast } ^p  \leq \| \nabla v_{\varepsilon _n} \| _p ^p \leq \|  v_{\varepsilon _n} \| _{(\varepsilon _n)} ^p - \frac{V_0}{\kappa_0} \int _{Y_n ^c} v^p _{\varepsilon _n}\dx.
		\end{equation}
		We take the limit as $n \rightarrow \infty$ in \eqref{est_difi} to get $\mathbb{S} l_0 ^{p/p^\ast }  \leq l_0.$ This estimate together with \eqref{contrad1} and the fact that $l_0>0$ lead to $ \mathbb{S}^{N/p} \leq l_0 \leq N c(\mathcal{I}_0),$ which is a contradiction with \eqref{minimax_est0}.
	\end{proof}
	\begin{lemma}\label{l_yaux}
		$\dist (\varepsilon \hat{y}_\varepsilon, \Omega ) \leq \varepsilon R_0$ and $(\varepsilon \hat{y}_\varepsilon)_{0<\varepsilon \leq \varepsilon _0}$ is bounded.
	\end{lemma}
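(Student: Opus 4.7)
I would argue both items by contradiction using the translated sequence $w_n(x):=v_{\varepsilon_n}(x+\hat y_{\varepsilon_n})$. By \eqref{norma_est} and Proposition \ref{p_linf}, $(w_n)$ is uniformly bounded in $D^{1,p}(\mathbb{R}^N)\cap L^\infty(\mathbb{R}^N)$, so on extracting subsequences $w_n\rightharpoonup w_0$ in $D^{1,p}(\mathbb{R}^N)$, $w_n\to w_0$ strongly in $L^s_{\loca}(\mathbb{R}^N)$ for every $1\le s<p^\ast$, and $\varepsilon_n\hat y_{\varepsilon_n}\to y_\ast\in\mathbb{R}^N\cup\{\infty\}$. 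Lemma \ref{l_lionzinho} together with the local strong convergence gives $\int_{B_{R_0}}w_0^p\dx\ge\beta_0>0$, so $w_0\not\equiv 0$.

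For (a) I would suppose $\dist(\varepsilon_n\hat y_{\varepsilon_n},\Omega)>\varepsilon_n R_0$ along the subsequence and treat first the case $y_\ast\in\mathbb{R}^N\setminus\overline\Omega$. Then $V(y_\ast)\ge V_0$ by the definition of $V_0$, and for every $\varphi\in C_0^\infty(\mathbb{R}^N)$ one has $\varepsilon_n(\supp\varphi+\hat y_{\varepsilon_n})\subset\mathbb{R}^N\setminus\Omega$ eventually, so $g(\varepsilon_n(x+\hat y_{\varepsilon_n}),t)=f_\ast(t)$ there. Proposition \ref{p_convergcomp} then identifies $w_0$ as a nonnegative weak solution of
\[
-\Delta_p w_0+V(y_\ast)w_0^{p-1}=f_\ast(w_0)\quad\text{in }\mathbb{R}^N.
\]
Testing this equation against $w_0\in W^{1,p}(\mathbb{R}^N)$ and invoking Proposition \ref{p_ge}--(iii), which provides $f_\ast(w_0)w_0\le(V_0/\kappa_0)w_0^p\le(V(y_\ast)/\kappa_0)w_0^p$, yields
\[
\int_{\mathbb{R}^N}|\nabla w_0|^p\dx+V(y_\ast)\Bigl(1-\frac{1}{\kappa_0}\Bigr)\int_{\mathbb{R}^N}w_0^p\dx\le 0,
\]
which forces $w_0\equiv 0$ since $\kappa_0>1$ and $V(y_\ast)>0$, contradicting $w_0\not\equiv 0$.

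For (b), if $(\varepsilon\hat y_\varepsilon)$ were unbounded a subsequence would satisfy $|\varepsilon_n\hat y_{\varepsilon_n}|\to\infty$, and the boundedness of $\overline\Omega$ would force $\dist(\varepsilon_n\hat y_{\varepsilon_n},\Omega)\to\infty$, hence $>\varepsilon_n R_0$ for large $n$. On every fixed compact set in $x$, the penalized regime $g(\varepsilon_n(\cdot+\hat y_{\varepsilon_n}),w_n)\le(V_0/\kappa_0)w_n^{p-1}$ applies for $n$ large, and essentially the same testing argument as in (a) (with $V_\ast:=\liminf V(\varepsilon_n(x+\hat y_{\varepsilon_n}))\ge V_0$ extracted via Fatou) produces $w_0\equiv 0$ and a contradiction. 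Once (b) is in hand, the case $y_\ast=\infty$ in (a) is automatically excluded.

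The main technical obstacle lies in the boundary subcase $y_\ast\in\partial\Omega$ of (a): the indicator $\mathcal{X}_\Omega(\varepsilon_n(x+\hat y_{\varepsilon_n}))$ need not converge for $|x|>R_0$, because the perturbation $\varepsilon_n x$ can push the point back into $\Omega$. I would handle this by a localized test-function argument inside $B_{R_0}$, where the contradictory hypothesis still guarantees $\varepsilon_n(x+\hat y_{\varepsilon_n})\in\mathbb{R}^N\setminus\Omega$: testing the equation against $w_n\eta^p$ with $\eta\in C_0^\infty(B_{R_0})$, absorbing the gradient cross-term via Young's inequality, passing to the limit and exploiting $\int_{B_{R_0}}w_0^p\ge\beta_0>0$ reproduces the same contradiction and closes the proof.
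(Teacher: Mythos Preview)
Your limit–equation strategy is different from the paper's and works cleanly in the cases $y_\ast\in\mathbb{R}^N\setminus\overline{\Omega}$ and $y_\ast=\infty$, but the boundary subcase $y_\ast\in\partial\Omega$ has a genuine gap, and this is precisely the \emph{generic} case: since the contradictory hypothesis only says $\dist(\varepsilon_n\hat y_{\varepsilon_n},\Omega)>\varepsilon_n R_0\to 0$, the limit point $y_\ast$ will typically sit on $\partial\Omega$. Your proposed remedy—testing against $w_n\eta^p$ with $\eta\in C_0^\infty(B_{R_0})$—does not close the argument. Carrying out that computation one obtains, after Young's inequality,
\[
(1-\epsilon)\int|\nabla w_n|^p\eta^p\dx+V_0\Bigl(1-\frac{1}{\kappa_0}\Bigr)\int w_n^p\,\eta^p\dx\le C_\epsilon\int w_n^p\,|\nabla\eta|^p\dx,
\]
which is just a Caccioppoli-type inequality. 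Since $\eta$ must vanish on $\partial B_{R_0}$, the right-hand side contains an unavoidable annular contribution scaling like $\sigma^{-p}\int_{B_{R_0}\setminus B_{R_0-\sigma}}w_0^p$, which blows up as $\sigma\to 0$, while the left-hand side stays bounded. Nothing forces $\int_{B_{R_0}}w_0^p=0$, so no contradiction follows. The trouble is structural: the contradictory hypothesis gives you the penalized regime \emph{only} on $B_{R_0}$, and unlike in case~(b) you cannot enlarge the cutoff.

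The paper avoids this entirely with a non-asymptotic, single-$\varepsilon$ argument. It fixes $\delta>0$, takes a cutoff $\psi$ in the \emph{original} variable with $\psi=0$ on $\Omega$, $\psi=1$ on $\{x:\dist(x,\Omega)\ge\delta\}$, $\|\nabla\psi\|_\infty\le C/\delta$, and tests $I_\varepsilon'(v_\varepsilon)$ against $\psi(\varepsilon\,\cdot\,)v_\varepsilon$. The rescaled cutoff satisfies $\|\nabla(\psi(\varepsilon\,\cdot\,))\|_\infty\le C\varepsilon/\delta$, which together with the penalization bound and the uniform estimate $\int_{\Omega_\varepsilon^c}v_\varepsilon^p\le C$ yields
\[
\int_{\{\psi_\varepsilon=1\}}\bigl(|\nabla v_\varepsilon|^p+v_\varepsilon^p\bigr)\dx\le\frac{C\varepsilon}{\delta}.
\]
If $B_{R_0}(\hat y_\varepsilon)$ missed the $\delta$-neighborhood $\varepsilon^{-1}A_{\Omega,\delta}$, this would contradict Lemma~\ref{l_lionzinho} once $\delta$ is sent to $\infty$ (for that fixed $\varepsilon$). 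The extra parameter $\delta$, decoupled from the fixed radius $R_0$, is exactly what your localized argument is missing.
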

	\begin{proof}
		Let us denote $A_{\Omega, \delta}  = \left\lbrace  x \in \mathbb{R}^N : \dist (x,\Omega ) \leq \delta \right\rbrace ,$ where $\delta >0. $ Consider $\psi \in C^\infty _0 (\mathbb{R}^N : [0,1])$ such that $\|\nabla \psi \|_\infty \leq C/\delta  $ with $\psi  = 1$ in $A_{\Omega, \delta} ^c$ and $\psi =0$ in $\Omega.$ Define $\psi _\varepsilon (x) = \psi (\varepsilon x ).$ Clearly $\psi _\varepsilon = 1$ in $\varepsilon^{-1}A_{\Omega, \delta}  ,$ $\psi _\varepsilon =0$ in $\Omega_\varepsilon $ and $\|\nabla \psi_\varepsilon \|_\infty \leq C \varepsilon/ \delta.$ Clearly $I'_{\varepsilon}(v_\varepsilon)\cdot (\psi _\varepsilon v_{\varepsilon}  ) = 0 $ implies
		\begin{align*}
		\int _{\mathbb{R}^N }  \left[  |\nabla v_\varepsilon|^p + \left(   V(\varepsilon x) - \frac{V_0}{\kappa_0}  \right)v^p_\varepsilon    \right]  \psi _\varepsilon\dx &= \int _{\mathbb{R}^N } \left( g(\varepsilon x, v_\varepsilon) v_\varepsilon  - \frac{V_0}{\kappa_0} v_\varepsilon ^p \right)\psi _\varepsilon\dx 	 \\
		&\qquad - \int _{\mathbb{R}^N} v_\varepsilon  |\nabla v_\varepsilon |^{p-2} \nabla v_\varepsilon \cdot \nabla \psi _\varepsilon  \dx.
		\end{align*}
		Nevertheless, since $\psi _\varepsilon = 0$ in $\Omega _\varepsilon,$ by Proposition \ref{p_ge}--(iii), we have
		\begin{equation*}
		\int _{\mathbb{R}^N } \left( g(\varepsilon x, v_\varepsilon) v_\varepsilon  - \frac{V_0}{\kappa_0} v_\varepsilon ^p \right)\psi _\varepsilon\dx \leq 0.
		\end{equation*}
		Also, by H\"{o}lder's inequality,
		\begin{equation*}
		- \int _{\mathbb{R}^N} v_\varepsilon  |\nabla v_\varepsilon |^{p-2} \nabla v_\varepsilon \cdot \nabla \psi _\varepsilon  \dx \leq \| \nabla \psi _\varepsilon \|_\infty \| v_\varepsilon \|_{L ^p (\Omega_\varepsilon ^c)}\| \nabla v _{\varepsilon}\|^{p-1}_{L ^p (\Omega_\varepsilon ^c)}.
		\end{equation*}
		Hence
		\begin{equation*}
		\int _{\Omega _\varepsilon^c}  \left[  |\nabla v_\varepsilon|^p + \left(   V(\varepsilon x) - \frac{V_0}{\kappa_0}  \right) v^p_\varepsilon   \right]  \psi _\varepsilon\dx \leq \|  \nabla \psi _\varepsilon \|_\infty \| \nabla v _{\varepsilon}\|^{p-1}_{L ^p (\Omega_\varepsilon ^c)} \left( \int _{\Omega ^c _\varepsilon} v_\varepsilon ^p  \dx \right)^{1/p}.
		\end{equation*}
		By \eqref{usa} it is easy to see that $V(\varepsilon x)  - V_0 / \kappa_0 \geq V_0 (1-1/\kappa_0)>0$ in $\Omega _\varepsilon^c,$ and by \eqref{norma_est}, $\| \nabla v _{\varepsilon}\|^{p-1}_{L ^p (\Omega_\varepsilon ^c)}$ is bounded. Hence
		\begin{equation}\label{peia}
		\int _{\Omega _\varepsilon ^c} \left(|\nabla v_\varepsilon |^p + v^p_\varepsilon  \right)\psi _\varepsilon \dx \leq C \frac{\varepsilon}{\delta } \left( \int _{\Omega ^c _\varepsilon} v_\varepsilon ^p  \dx \right)^{1/p}.
		\end{equation}
		Arguing as in \eqref{final1}, the integral on the right-hand side of \eqref{peia} is bounded for any $0  < \varepsilon \leq \varepsilon_0.$ Next, using \eqref{peia}, we are going to prove:
		\begin{equation}\label{gostei}
		B_{R_0} (\hat{y}_\varepsilon) \cap \{  x\in \mathbb{R}^N : \varepsilon x \in A_{\Omega, \delta}   \} \neq \emptyset,\quad \text{for all} \ 0<\varepsilon \leq \varepsilon_0.
		\end{equation}
		Indeed, suppose the existence of $0 < \varepsilon_\ast \leq \varepsilon_0$ such that $B_{R_0} (\hat{y}_{\varepsilon_\ast}) \cap (\varepsilon_\ast  ^{-1} A_{\Omega, \delta}  ) = \emptyset.$ In this case, we have $\psi _{\varepsilon_\ast } = 1$ in $B_{R_0} (\hat{y}_{\varepsilon_\ast})$ and by \eqref{peia} we get
		\begin{equation*}
		\int _{B_{R_0} (\hat{y}_{\varepsilon_\ast})} |\nabla v_{\varepsilon_\ast} |^p + v^p_{\varepsilon_\ast}   \dx \leq  C \frac{\varepsilon_\ast}{\delta } \left( \int _{\Omega ^c _{\varepsilon _\ast} } v_{\varepsilon _\ast} ^p  \dx \right)^{1/p} \rightarrow 0,\quad \text{as} \ \delta\rightarrow \infty.
		\end{equation*}
		This lead to a contradiction with Lemma \ref{l_lionzinho}. By \eqref{gostei} for each $0<\varepsilon \leq \varepsilon_0$ there is $z _\varepsilon$ such that $|\varepsilon z_\varepsilon - \varepsilon \hat{y}_\varepsilon| < \varepsilon R_0$ and $\dist(\varepsilon z_\varepsilon , \Omega ) \leq \delta .$ Hence $\dist(\varepsilon \hat{y}_\varepsilon , \Omega ) \leq \varepsilon R_0 +\delta,$ for all $\delta >0.$ If there is $(\varepsilon_n)\subset (0,\varepsilon_0)$ such that $|\varepsilon _n \hat{y}_{\varepsilon_n}| \rightarrow \infty ,$ then we would have following contradiction:
		\begin{equation*}
		\varepsilon_n R_0 \geq |\varepsilon_n \hat{y}_{\varepsilon_n} - \omega _n | \geq |\varepsilon _n \hat{y}_{\varepsilon_n}| - \sup_{\omega \in \Omega } |\omega| \rightarrow \infty, \quad  \text{as }n \rightarrow \infty,
		\end{equation*}
		where $|\varepsilon_n \hat{y}_{\varepsilon_n} - \omega _n | = \dist(\varepsilon_n \hat{y}_{\varepsilon_n}  , \Omega).$
	\end{proof}
	In what follows, we are going to consider the set $\mathcal{C}_\Omega = \{ x \in \Omega : V(x) \geq V(0) \}.$ Condition \ref{V_delpinofelmer} implies $\mathcal{C}_\Omega \neq \emptyset$ and $\partial \Omega \subset \overline{\mathcal{C}_\Omega }.$
	\begin{proposition}\label{p_brutal} There exist $R>0$ and $(y_\varepsilon)_{0<\varepsilon \leq \varepsilon _0} \subset \mathbb{R}^N$ such that $\varepsilon y_\varepsilon \in \mathcal{C}_{\Omega} $ and
		\begin{equation}\label{faltou}
		\int _{B_{R} (y_\varepsilon)} v_\varepsilon ^p \dx \geq \beta _0 > 0,\quad \text{for any } 0 <\varepsilon \leq \varepsilon_0.
		\end{equation}
	\end{proposition}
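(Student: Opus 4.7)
The strategy is to refine the almost-concentration sequence $(\hat y_\varepsilon)$ provided by Lemmas \ref{l_lionzinho} and \ref{l_yaux} into a genuine $(y_\varepsilon)$ with $\varepsilon y_\varepsilon \in \mathcal{C}_\Omega$. Since by Lemma \ref{l_yaux} the family $(\varepsilon \hat y_\varepsilon)_{\varepsilon}$ is bounded, along any $\varepsilon_n \to 0^+$ we may pass to a subsequence with $\varepsilon_n \hat y_{\varepsilon_n} \to z_0 \in \overline{\Omega}$. Then we apply Proposition \ref{p_convergcomp} to $w_n(x) := v_{\varepsilon_n}(x + \hat y_{\varepsilon_n})$; together with Lemma \ref{l_lionzinho}, the resulting weak limit $w_0$ will be nontrivial and positive, solving a limit equation with constant potential $V(z_0)$ (and pure nonlinearity $f(\cdot)+(\cdot)^{p^\ast-1}$ when $z_0 \in \Omega$).

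The core of the argument is the claim $V(z_0) \geq V(0)$ for every such cluster point. When $z_0 \in \partial \Omega$ this is immediate from \ref{V_delpinofelmer}, since $V(z_0) \geq \inf_{\partial \Omega} V > V(0)$. When $z_0 \in \Omega$, we argue by contradiction: suppose $V(z_0) < V(0)$ and let $u_\ast \in W^{1,p}(\mathbb{R}^N) \cap C^{1,\alpha}_{\loca}(\mathbb{R}^N)$ be a positive ground state of the constant-potential limit equation
\[
-\Delta_p u + V(z_0)\hspace{0.03cm} u^{p-1} = f(u) + u^{p^\ast-1} \quad \text{in } \mathbb{R}^N,
\]
with mountain pass level $c_{V(z_0)}$. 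By the monotonicity of the mountain pass level in the constant potential, which follows from hypothesis \ref{f_increasing} exactly as in Lemma \ref{l_geometry}, we get $c_{V(z_0)} < c_{V(0)} = c(\mathcal{I}_0)$. Mimicking the construction in Lemma \ref{l_est_j0} with $u_\ast$ in place of $u_0$ and a smooth cutoff $\psi$ supported in a ball $B_r$ chosen so that $B_r(z_0) \subset \Omega$, we set
\[
u_{R,\varepsilon}(x) := \psi\!\left(\frac{x-\hat y_\varepsilon}{R}\right) u_\ast(x-\hat y_\varepsilon).
\]
For $\varepsilon$ sufficiently small, the physical support of $u_{R,\varepsilon}$ lies inside $\Omega$, hence $g(\varepsilon x,\cdot) = f(\cdot) + (\cdot)^{p^\ast-1}$ there, and $V(\varepsilon x)$ converges uniformly to $V(z_0)$ on that support. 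The same dominated-convergence/limit analysis as in Lemma \ref{l_est_j0} then yields
\[
\limsup_{\varepsilon \to 0} \max_{t \geq 0} I_\varepsilon(t\, u_{R,\varepsilon}) \leq c_{V(z_0)} < c(\mathcal{I}_0),
\]
contradicting Proposition \ref{p_estbaixo}.

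With $V(z_0) \geq V(0)$ in hand for every cluster point, we conclude $\dist(\varepsilon \hat y_\varepsilon, \overline{\mathcal{C}_\Omega}) \to 0$ as $\varepsilon \to 0$. Condition \ref{V_delpinofelmer} together with continuity of $V$ yield $\rho > 0$ with $V(x) > V(0)$ whenever $x \in \Omega$ and $\dist(x, \partial \Omega) < \rho$, so any point of $\Omega$ sufficiently close to $\partial \Omega$ already lies in $\mathcal{C}_\Omega$. Combined with Lemma \ref{l_yaux}, after shrinking $\varepsilon_0$ if necessary, we select for each $\varepsilon \in (0,\varepsilon_0)$ a point $y_\varepsilon$ with $|y_\varepsilon - \hat y_\varepsilon| \leq R_1$ (for a fixed $R_1$ depending only on $R_0$, $\rho$, and $\Omega$) and $\varepsilon y_\varepsilen \in \mathcal{C}_\Omega$. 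Setting $R := R_0 + R_1$ makes $B_R(y_\varepsilon) \supset B_{R_0}(\hat y_\varepsilon)$, so (\ref{faltou}) follows directly from Lemma \ref{l_lionzinho}. The principal obstacle is the contradiction step: ensuring that the support of the competitor $u_{R,\varepsilon}$ remains inside the non-penalized region for all relevant $R$ and $\varepsilon$, so that the mountain pass comparison genuinely produces a path of energy strictly below $c(\mathcal{I}_0)$.
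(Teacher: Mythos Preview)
Your approach is much heavier than the paper's and, more importantly, the closing step does not go through. The paper's proof is a five–line geometric argument that never looks at cluster points, never invokes Proposition~\ref{p_convergcomp}, and never uses Proposition~\ref{p_estbaixo}. It simply takes the closest point $\hat w_\varepsilon\in\partial\Omega$ to $\varepsilon\hat y_\varepsilon$ (Lemma~\ref{l_yaux} gives $|\varepsilon\hat y_\varepsilon-\hat w_\varepsilon|\le\varepsilon R_0$), uses $V(\hat w_\varepsilon)\ge\inf_{\partial\Omega}V>V(0)$ from \ref{V_delpinofelmer} and continuity of $V$ to pick $w_\varepsilon\in\mathcal C_\Omega$ with $|w_\varepsilon-\hat w_\varepsilon|<\varepsilon\,\mathrm{diam}(\mathcal C_\Omega)$, and sets $y_\varepsilon=\varepsilon^{-1}w_\varepsilon$. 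The total shift is $|y_\varepsilon-\hat y_\varepsilon|\le R_0+\mathrm{diam}(\mathcal C_\Omega)$, so $R=2R_0+\mathrm{diam}(\mathcal C_\Omega)$ works. No limit equation, no energy comparison.

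Your argument has a genuine gap at the end. From the cluster–point analysis you only obtain $\dist(\varepsilon\hat y_\varepsilon,\overline{\mathcal C_\Omega})\to 0$, i.e.\ an $o(1)$ bound in the \emph{physical} variable. To conclude $|y_\varepsilon-\hat y_\varepsilon|\le R_1$ with $R_1$ \emph{fixed}, you need $\dist(\varepsilon\hat y_\varepsilon,\mathcal C_\Omega)=O(\varepsilon)$, which is strictly stronger. Lemma~\ref{l_yaux} does not help you in the interior case: if $\varepsilon\hat y_\varepsilon\in\Omega$ with $\dist(\varepsilon\hat y_\varepsilon,\partial\Omega)\ge\rho$, then $\dist(\varepsilon\hat y_\varepsilon,\Omega)=0$ and the lemma says nothing, while your collar argument only applies near $\partial\Omega$. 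Nothing in your proof rules out, say, $\dist(\varepsilon\hat y_\varepsilon,\mathcal C_\Omega)\sim\sqrt\varepsilon$, in which case $\varepsilon^{-1}\dist(\varepsilon\hat y_\varepsilon,\mathcal C_\Omega)\to\infty$ and no uniform $R_1$ exists. The entire detour through cluster points, limit problems, and the contradiction with Proposition~\ref{p_estbaixo} is therefore both unnecessary for this proposition and insufficient to close it; the paper's elementary shift based on Lemma~\ref{l_yaux} and \ref{V_delpinofelmer} is what actually delivers the required $O(\varepsilon)$ rate.
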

	\begin{proof}
	 Let $\hat{w}_\varepsilon \in \partial \Omega$ such that $\dist(\varepsilon \hat{y}_\varepsilon , \Omega) = |\varepsilon \hat{y}_\varepsilon - \hat{w} _\varepsilon |.$ Using \ref{V_delpinofelmer}, we have $V(\hat{w} _\varepsilon)>V(0)$ and by continuity one can get $\varrho _\varepsilon \in (0, \varepsilon \delta _\Omega ),$ with $\delta _\Omega = {\rm diam} (\mathcal{C}_\Omega),$ in a such way that $B_{\varrho _\varepsilon} (\hat{w}_\varepsilon )\cap \Omega \subset  \mathcal{C}_\Omega.$ Choosing $w_\varepsilon \in B_{\varrho _\varepsilon} (\hat{w}_\varepsilon )\cap \Omega ,$ by Lemma \ref{l_yaux}, it is clear that $| \varepsilon\hat{y}_\varepsilon- w _\varepsilon|\leq  \varepsilon (R_0+\delta_\Omega ).$ Hence $B_{R_0}(\hat{y}_\varepsilon) \subset B_{2R_0+\delta _\Omega} (\varepsilon ^{-1} w_\varepsilon),$ which allow us to take $R=2R_0+\delta_\Omega $ and $y _\varepsilon = \varepsilon ^{-1} w_\varepsilon$ to use Lemma \ref{l_lionzinho} and obtain \eqref{faltou}.
	\end{proof}
Taking any $\varepsilon_n \rightarrow 0$ and $y_n = y_{\varepsilon _n}$ given by Proposition \ref{p_brutal}, we see that $\varepsilon_n y_n \rightarrow x_\Omega \in \overline{\mathcal{C}}_\Omega,$ up to a subsequence. Now we consider \eqref{Ptrans}, $w_n = v_{\varepsilon _n} (\cdot + y_n) \rightharpoonup w_0$ in $D^{1,p}(\mathbb{R}^N),$ and the related limit problem
\begin{equation*}
	\left\{
	\begin{aligned}
		&-\Delta_pw+V(x _\Omega )  w^{p-1}=g_\Omega  (w) \  \text{ in } \ \mathbb{R}^{N},\\
		&w\in D^{1,p}(\mathbb{R}^N)\cap C^{1,\alpha}_{\loca}(\mathbb{R}^N) ,\ w \geq 0 \  \text{ in } \ \mathbb{R}^{N},
	\end{aligned}
	\right.
\end{equation*}
together with its solution $w_0\in W^{1,p}(\mathbb{R}^N)$ given by Proposition \ref{p_convergcomp}, where $V_\ast = V(x_\Omega),$ 
\begin{equation*}
    g_\ast (t)= g_\Omega  (t) = \mathcal{X}_\Omega(x _\Omega )  ( f(t) + t^{p^\ast -1}) + (1 - \mathcal{X}_\Omega(x _\Omega ) ) f_\ast (t),
\end{equation*}
$\mathcal{X}_\ast = \mathcal{X}_\Omega (x _\Omega)$ and $\mathcal{I}_\ast = \mathcal{I}_{x_\Omega }.$ Proposition \ref{p_brutal} implies
\begin{equation*}
0 < \beta _0\leq \int _{B_{R} (y_n)} v_{\varepsilon _n} ^p \dx = \int _{B_{R}} v_{\varepsilon _n}^p(\cdot + y_n)  \dx = \int _{B_{R}} w_n ^p\dx.
\end{equation*}
Consequently, using compact Sobolev embeddings, $w_0 >0.$
\begin{proposition}\label{p_vzao}
	$\lim_{n \rightarrow \infty} V(\varepsilon _n y_n)  = V(0)= V(x_\Omega)$ and $x_\Omega \in \Omega.$
\end{proposition}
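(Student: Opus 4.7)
The strategy is a case-by-case contradiction argument based on whether $x_\Omega$ lies in the interior or on the boundary of $\Omega$, combined with continuity of $V$. Since $\varepsilon_n y_n \to x_\Omega$ along the fixed subsequence and $V$ is locally H\"{o}lder continuous, $V(\varepsilon_n y_n) \to V(x_\Omega)$; so the claim reduces to $V(x_\Omega)=V(0)$ together with $x_\Omega\in\Omega$. Existence of a ground state for any limit problem with constant potential (used below) follows from the same mountain pass arguments of Section \ref{s_existence}.

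\textbf{Case 1: $x_\Omega \in \partial\Omega$.} Hypothesis \ref{V_delpinofelmer} yields $V_\ast := V(x_\Omega) \geq \inf_{\partial\Omega} V > V(0) \geq 0$, hence in particular $V_\ast > V_0$. Moreover $\mathcal{X}_\Omega(x_\Omega)=0$, so $g_\ast = f_\ast$ in Proposition \ref{p_convergcomp} and the limit equation reads $-\Delta_p w_0 + V_\ast w_0^{p-1} = f_\ast(w_0)$. The definition of $a_0$ together with \ref{f_increasing} forces $(f(t) + t^{p^\ast-1})/t^{p-1} \leq V_0/\kappa_0$ for $0 \leq t \leq a_0$, so that $f_\ast(t) \leq (V_0/\kappa_0)\,t^{p-1}$ for every $t \geq 0$. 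Since $V_\ast > 0$ one has $w_0 \in W^{1,p}(\mathbb{R}^N)$, and testing the limit equation against $w_0$ gives
\begin{equation*}
\int_{\mathbb{R}^N}|\nabla w_0|^p\dx + \bigl(V_\ast - V_0/\kappa_0\bigr)\int_{\mathbb{R}^N} w_0^p\dx \leq 0,
\end{equation*}
which contradicts $w_0 > 0$ because $V_\ast > V_0 > V_0/\kappa_0$.

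\textbf{Case 2: $x_\Omega \in \Omega$ with $V_\ast := V(x_\Omega) > V(0)$.} Now $\mathcal{X}_\Omega(x_\Omega)=1$, so $g_\ast(t)=f(t)+t^{p^\ast-1}$ and $\mathcal{I}_\ast$ is structurally $\mathcal{I}_0$ with $V(0)$ replaced by $V_\ast$; in particular it admits a ground state $u_\ast>0$ at its mountain pass level $c(\mathcal{I}_\ast)$. Three ingredients combine. \emph{(a) Strict monotonicity:} since $u_\ast > 0$ and $V_\ast > V(0)$, $\mathcal{I}_0(tu_\ast) < \mathcal{I}_\ast(tu_\ast)$ for all $t > 0$, whence using \ref{f_increasing} to locate the maxima,
\begin{equation*}
c(\mathcal{I}_0) \leq \max_{t\geq 0}\mathcal{I}_0(tu_\ast) < \max_{t\geq 0}\mathcal{I}_\ast(tu_\ast) = \mathcal{I}_\ast(u_\ast) = c(\mathcal{I}_\ast).
\end{equation*}
\emph{(b)} The path $t\mapsto tw_0$ belongs to $\Gamma_{\mathcal{I}_\ast}$ (the critical term in $g_\ast$ makes $\mathcal{I}_\ast(tw_0)\to -\infty$) and its max is attained at $t=1$ by \ref{f_increasing}, so $c(\mathcal{I}_\ast) \leq \mathcal{I}_\ast(w_0)$. \emph{(c)} Using $I'_{\varepsilon_n}(v_{\varepsilon_n})\cdot v_{\varepsilon_n}=0$, Proposition \ref{p_estbaixo}, a translation, Proposition \ref{p_ge}--(iv) and Fatou's lemma,
\begin{align*}
c(\mathcal{I}_0) = \lim_n I_{\varepsilon_n}(v_{\varepsilon_n}) &= \liminf_n \frac{1}{p}\int_{\mathbb{R}^N}\mathcal{G}(\varepsilon_n(x+y_n), w_n)\dx \\
&\geq \frac{1}{p}\int_{\mathbb{R}^N}\bigl[g_\Omega(w_0)\,w_0 - p\,G_\Omega(w_0)\bigr]\dx = \mathcal{I}_\ast(w_0),
\end{align*}
where the pointwise lower bound on the integrand uses that $\varepsilon_n(x+y_n)\to x_\Omega$ in the open set $\Omega$, so $\mathcal{X}_\Omega(\varepsilon_n(x+y_n))\to 1$ pointwise. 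Chaining (a)--(c) yields $c(\mathcal{I}_0) \geq c(\mathcal{I}_\ast) > c(\mathcal{I}_0)$, a contradiction. Hence $V(x_\Omega)=V(0)$.

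The delicate point is the Fatou step in (c): the map $x\mapsto \mathcal{X}_\Omega(x)$ is discontinuous exactly on $\partial\Omega$, so passing the pointwise limit through $\mathcal{G}$ requires $x_\Omega$ to lie in the \emph{interior} of $\Omega$. This is precisely why the boundary case must be ruled out first by the direct PDE argument of Case 1 rather than via the minimax comparison, and it is the main obstacle that makes the two-case structure necessary.
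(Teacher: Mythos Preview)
Your proof is correct, but the two-case split is not actually necessary, and your diagnosis of the ``delicate point'' is off. The paper gives a unified argument: it chains
\[
c(\mathcal{I}_0)\ \leq\ \mathcal{I}_0(t_0 w_0)\ \leq\ \mathcal{I}_{x_\Omega}(t_0 w_0)\ \leq\ \max_{t\geq 0}\mathcal{I}_{x_\Omega}(tw_0)=\mathcal{I}_{x_\Omega}(w_0)\ \leq\ c(\mathcal{I}_0),
\]
where the second inequality uses \emph{both} $V(0)\leq V(x_\Omega)$ and Proposition~\ref{p_ge}--(i) (so that $G_\ast\leq F+\tfrac{1}{p^\ast}t^{p^\ast}$ under \ref{f_increasing}), and the last is Fatou together with Proposition~\ref{p_estbaixo}. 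Strictness in the second inequality whenever $V(0)<V(x_\Omega)$ then forces $V(x_\Omega)=V(0)$, whence $x_\Omega\in\Omega$ by \ref{V_delpinofelmer}. No separate boundary case is needed.

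The Fatou step does \emph{not} require $x_\Omega\in\Omega$. Under \ref{f_increasing}, Proposition~\ref{p_ge}--(v) makes $t\mapsto g(y,t)/t^{p-1}$ nondecreasing, hence $t\mapsto\mathcal{G}(y,t)$ is nondecreasing for every $y$; since for $y\notin\Omega$ one has $\mathcal{G}(y,t)=\mathcal{G}(y',a_0)$ for all $t\geq a_0$ and $y'\in\Omega$, it follows that $\mathcal{G}(y',t)\geq\mathcal{G}(y,t)$ for every $y'\in\Omega$, $y\notin\Omega$, $t\geq 0$. Thus $\mathcal{G}(\varepsilon_n(x+y_n),w_n)\geq\mathcal{G}(x_\Omega,w_n)\to\mathcal{G}(x_\Omega,w_0)$ pointwise regardless of whether $x_\Omega\in\partial\Omega$, and Fatou applies directly. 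Your Case~1 testing argument is nonetheless a clean alternative route to rule out the boundary. One small imprecision: you assert $V_\ast>V_0$, but only $V_\ast\geq V_0$ is immediate from $x_\Omega\in\partial\Omega\subset\Omega^c$; the conclusion survives because $\kappa_0>1$ still gives $V_\ast-V_0/\kappa_0>0$.
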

\begin{proof}
We have $c(\mathcal{I}_0)  \leq \max_{t \geq 0 } \mathcal{I}_0(w_0) = \mathcal{I}_0(t_0 w_0),$ for some $t_0>0$ (see \cite{wang-an-zhang2015}). Next we use Proposition \ref{p_ge}--(i) and $V(0) \leq V(x_\Omega )$ to get that $\mathcal{I}_0(t_0 w_0) \leq \mathcal{I}_{x_\Omega} (t_0 w_0).$ Thus
	\begin{equation}\label{prova_lema}
	c(\mathcal{I}_0)  \leq \mathcal{I}_{x_\Omega} (t_0 w_0) \leq \max _{t \geq 0} \mathcal{I}_{x_\Omega} (t w_0) = \mathcal{I}_{x_\Omega}  (w_0),
	\end{equation}
	where the last equality follows by the last statement of Lemma \ref{l_geometry}. By Proposition \ref{p_ge}--(iv) we can use Fatou's lemma to do a change of variables and obtain
	\begin{align*}
	c(\mathcal{I}_0) \leq \mathcal{I}_{x_\Omega}  (w_0) &= \frac{1}{p} \int _{\mathbb{R}^n} \mathcal{G}(x_\Omega , w_0) \dx \\
	& \leq \frac{1}{p}\liminf _{n \rightarrow \infty } \int _{ \mathbb{R}^N} \mathcal{G}(\varepsilon_n (x+y_n ), w_n)  \dx =\frac{1}{p}\liminf _{n \rightarrow \infty } \int _{\mathbb{R}^N} \mathcal{G}(\varepsilon_n x, v_{\varepsilon_n})  \dx.
	\end{align*}
	Consequently, by Proposition \ref{p_estbaixo},
	\begin{equation}\label{prova_lema2}
	c(\mathcal{I}_0)  \leq\frac{1}{p}\liminf _{n \rightarrow \infty } \int _{ \mathbb{R}^N } \mathcal{G}(\varepsilon_n x, v_{\varepsilon_n})  \dx = \liminf _{n \rightarrow \infty } \left(I_{\varepsilon _n}(v_{\varepsilon_n}) - \frac{1}{p }I_{\varepsilon _n}'(v_{\varepsilon_n})\cdot v_{\varepsilon_n} \right) =  c(\mathcal{I}_0).
	\end{equation}
	We know that $V(0) \leq V(x_\Omega).$ If $V(0) < V(x_\Omega)$ then $\mathcal{I}_0(t_0 w_0) < \mathcal{I}_{x_\Omega} (t_0 w_0),$ which from \eqref{prova_lema} would lead to $c(\mathcal{I}_0)  < \mathcal{I}_{x_\Omega}  (w_0),$ a contradiction with \eqref{prova_lema2}. Hence $V(x_\Omega )=V(0) $ and by \ref{V_delpinofelmer} we have $x_\Omega \in \Omega.$\end{proof}
Next we consider the sets
\begin{multline*}
\mathcal{P}_n =[v_{\varepsilon _n} \geq a_0] \cap \left\lbrace x \in \mathbb{R}^N : \varepsilon_n x \in \Omega ^c \right\rbrace,\ \\ \mathcal{Q}_n =[w_{n} \geq a_0]\cap \left\lbrace  x \in \mathbb{R}^N :\varepsilon_n x + \varepsilon_n y_n \in \Omega ^c \right\rbrace  \text{ and }Q_\ast = [w_{0} \leq a_0]\cup \left\lbrace  x \in \mathbb{R}^N :x_\Omega \in \Omega \right\rbrace.
\end{multline*}
We prove that $|\mathcal{Q}_n| \rightarrow 0.$ Clearly $\mathcal{Q}_n = \mathcal{P}_n - y_n$ and so $|\mathcal{Q}_n| = |\mathcal{P}_n|.$ We first establish a convergence result involving $c(I_{\varepsilon}).$
	\begin{lemma}\label{l_melhorado}
		$	c(\mathcal{I}_0) = \liminf _{n \rightarrow \infty } \int _{ \mathcal{P}^c_n} p^{-1} \mathcal{G}(\varepsilon_n x, v_{\varepsilon_n})  \dx = \liminf _{n \rightarrow \infty } \int _{\mathbb{R}^N} p^{-1} \mathcal{G}(\varepsilon_n x, v_{\varepsilon_n})  \dx .$ 
	\end{lemma}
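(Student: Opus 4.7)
The second equality is essentially already contained in the proof of Proposition \ref{p_estbaixo}. Since $I'_{\varepsilon_n}(v_{\varepsilon_n}) \cdot v_{\varepsilon_n} = 0$, one has
\begin{equation*}
c(I_{\varepsilon_n}) = I_{\varepsilon_n}(v_{\varepsilon_n}) - \frac{1}{p} I'_{\varepsilon_n}(v_{\varepsilon_n}) \cdot v_{\varepsilon_n} = \frac{1}{p}\int_{\mathbb{R}^N} \mathcal{G}(\varepsilon_n x, v_{\varepsilon_n}) \dx,
\end{equation*}
and by Proposition \ref{p_estbaixo} the left-hand side converges to $c(\mathcal{I}_0)$ as $n\to\infty$. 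Thus $\int_{\mathbb{R}^N} p^{-1}\mathcal{G}(\varepsilon_n x,v_{\varepsilon_n})\dx \to c(\mathcal{I}_0)$.

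For the first equality, the strategy is a Fatou-type argument applied to the shifted functions $w_n(x) = v_{\varepsilon_n}(x+y_n)$, exploiting the fact that the translated ``penalized set'' $\mathcal{Q}_n = \mathcal{P}_n - y_n$ escapes to infinity. Passing to a subsequence, one has $\varepsilon_n y_n \to x_\Omega$ with $x_\Omega \in \Omega$ and $V(x_\Omega) = V(0)$ by Proposition \ref{p_vzao}, and $w_n \to w_0$ a.e.\ by Proposition \ref{p_convergcomp}, where $w_0$ solves \eqref{Ptrans0} with $V_\ast = V(x_\Omega)$. Since $x_\Omega \in \Omega$ is open, for each fixed $x\in\mathbb{R}^N$ eventually $\varepsilon_n(x+y_n) \in \Omega$, so $x \notin \mathcal{Q}_n$; hence $\mathbf{1}_{\mathcal{Q}_n^c}(x) \to 1$ pointwise. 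Combined with $\mathcal{G}\ge 0$ (Proposition \ref{p_ge}--(iv)) and the change of variables, Fatou yields
\begin{equation*}
\int_{\mathbb{R}^N} \mathcal{G}(x_\Omega, w_0)\dx \;\le\; \liminf_{n\to\infty}\int_{\mathcal{Q}_n^c}\mathcal{G}(\varepsilon_n(x+y_n),w_n)\dx \;=\; \liminf_{n\to\infty}\int_{\mathcal{P}_n^c}\mathcal{G}(\varepsilon_n x,v_{\varepsilon_n})\dx.
\end{equation*}

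To close the loop, use that $\mathcal{I}'_{x_\Omega}(w_0) \cdot w_0 = 0$ gives $\mathcal{I}_{x_\Omega}(w_0) = p^{-1}\int \mathcal{G}(x_\Omega,w_0)\dx$; since $x_\Omega \in \Omega$ one has $g(x_\Omega,\cdot) = f(\cdot)+(\cdot)^{p^\ast-1}$ so $\mathcal{I}_{x_\Omega}=\mathcal{I}_0$, and by the last statement of Lemma \ref{l_geometry} together with \eqref{minimax_est0}, $\mathcal{I}_0(w_0) = \max_{t\ge 0}\mathcal{I}_0(tw_0)\ge c(\mathcal{I}_0)$. Chaining everything together with the second equality:
\begin{equation*}
p\,c(\mathcal{I}_0) \le \int_{\mathbb{R}^N}\mathcal{G}(x_\Omega,w_0)\dx \le \liminf_{n\to\infty}\int_{\mathcal{P}_n^c}\mathcal{G}(\varepsilon_n x,v_{\varepsilon_n})\dx \le \limsup_{n\to\infty}\int_{\mathcal{P}_n^c}(\cdots) \le \lim_{n\to\infty}\int_{\mathbb{R}^N}\mathcal{G}(\varepsilon_n x,v_{\varepsilon_n})\dx = p\,c(\mathcal{I}_0),
\end{equation*}
forcing all inequalities to be equalities and yielding the first claim. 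Since the argument applies along every subsequence admitting the weak/a.e.\ limits used above, the full $\liminf$ over $n$ equals $c(\mathcal{I}_0)$.

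The main obstacle is the pointwise convergence $\mathbf{1}_{\mathcal{Q}_n^c}\to 1$ that makes Fatou effective; it relies crucially on the fact that the concentration point lies \emph{inside} $\Omega$, i.e.\ on Proposition \ref{p_vzao}. A useful by-product of the above chain is that $\lim_n\int_{\mathcal{P}_n}\mathcal{G}(\varepsilon_n x,v_{\varepsilon_n})\dx = 0$, which — since $\mathcal{G}$ reduces to a strictly positive constant on $\mathcal{P}_n$ by \ref{f_AR}–\ref{f_porbaixo} and the definition of $a_0$ — implies $|\mathcal{P}_n|\to 0$, a fact presumably exploited in the subsequent lemmas to ensure that $v_{\varepsilon_n}\le a_0$ on $\Omega_{\varepsilon_n}^c$ for large $n$.
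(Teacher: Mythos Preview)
Your proposal is correct and follows essentially the same route as the paper's own proof: both arguments hinge on Proposition~\ref{p_vzao} to ensure $x_\Omega\in\Omega$, which gives the pointwise convergence $\mathbf{1}_{\mathcal{Q}_n^c}\to 1$, then apply Fatou's lemma to obtain $\int_{\mathbb{R}^N}\mathcal{G}(x_\Omega,w_0)\dx\le\liminf_n\int_{\mathcal{P}_n^c}\mathcal{G}(\varepsilon_n x,v_{\varepsilon_n})\dx$, and finally sandwich using $c(\mathcal{I}_0)\le\mathcal{I}_{x_\Omega}(w_0)$ from below and the full integral (equal to $p\,c(I_{\varepsilon_n})\to p\,c(\mathcal{I}_0)$ by Proposition~\ref{p_estbaixo}) from above. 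Your observation that the chain forces $\int_{\mathcal{P}_n}\mathcal{G}(\varepsilon_n x,v_{\varepsilon_n})\dx\to 0$ and hence $|\mathcal{P}_n|\to 0$ is also exactly the content of the paper's subsequent Lemma~\ref{l_bemtec}.
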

	\begin{proof}
		By Proposition \ref{p_vzao}, $Q_\ast=\mathbb{R}^N$ and we can write $\mathcal{G}(x_\Omega , w_0)  = \mathcal{G}(x_\Omega , w_0)  \mathcal{X}_{Q_\ast}.$ Since $\mathcal{X}_{Q_n^c} \rightarrow \mathcal{X}_{Q_\ast}$ in $\mathbb{R}^N,$ Fatou's lemma yields
		\begin{equation*}
		\int _{\mathbb{R}^n} \mathcal{G}(x_\Omega , w_0) \dx = \int _{Q_\ast } \mathcal{G}(x_\Omega , w_0) \dx \leq \liminf _{n \rightarrow \infty } \int _{ \mathcal{Q}^c_n} \mathcal{G}(\varepsilon_n (x+y_n ), w_n)  \dx=\liminf _{n \rightarrow \infty } \int _{ \mathcal{P}^c_n} \mathcal{G}(\varepsilon_n x, v_{\varepsilon_n})  \dx,
		\end{equation*}
		where in the last equality we used the change of variables theorem. Thus we can follow the proof of Proposition \ref{p_vzao} to get
	\begin{align}
	 c(\mathcal{I}_0) \leq \mathcal{I}_{x_\Omega}  (w_0) & \leq \frac{1}{p}\liminf _{n \rightarrow \infty } \int _{ \mathcal{Q}^c_n} \mathcal{G}(\varepsilon_n (x+y_n ), w_n)  \dx \nonumber \\
	 &=\frac{1}{p}\liminf _{n \rightarrow \infty } \int _{ \mathcal{P}^c_n} \mathcal{G}(\varepsilon_n x, v_{\varepsilon_n})  \dx\leq c(\mathcal{I}_0). \label{newproof}
	\end{align}
	This ends the proof.
	 \end{proof}
	\begin{lemma}\label{l_bemtec}
		$\lim _{n \rightarrow \infty }|\mathcal{Q}_n| =\lim _{n \rightarrow \infty }|\mathcal{P}_n| = 0,$ up to a subsequence.
	\end{lemma}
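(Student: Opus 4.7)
Since $\mathcal{Q}_n = \mathcal{P}_n - y_n$ gives $|\mathcal{Q}_n| = |\mathcal{P}_n|$, the plan is to reduce everything to showing $|\mathcal{P}_n|\to 0$ along a subsequence. The strategy has two independent ingredients: (a) a \emph{pointwise} positive lower bound for $\mathcal{G}(\varepsilon_n x, v_{\varepsilon_n})$ on the penalization set $\mathcal{P}_n$, and (b) an \emph{integral} bound saying that $\int_{\mathcal{P}_n}\mathcal{G}(\varepsilon_n x, v_{\varepsilon_n})\dx$ vanishes along a subsequence, extracted from Lemma \ref{l_melhorado}.

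For (a), on $\mathcal{P}_n$ we have $v_{\varepsilon_n}(x)\geq a_0$ and $\varepsilon_n x\in\Omega^c$, so by construction $g(\varepsilon_n x,v_{\varepsilon_n})=f_\ast(v_{\varepsilon_n})=(V_0/\kappa_0)v_{\varepsilon_n}^{p-1}$. A direct antiderivative calculation splitting the integral at $a_0$ yields
\begin{equation*}
pG(\varepsilon_n x, v_{\varepsilon_n})=pF(a_0)+\frac{p}{p^\ast}a_0^{p^\ast}+\frac{V_0}{\kappa_0}\bigl(v_{\varepsilon_n}^p-a_0^p\bigr),
\end{equation*}
so that
\begin{equation*}
\mathcal{G}(\varepsilon_n x,v_{\varepsilon_n})=\frac{V_0}{\kappa_0}a_0^p-pF(a_0)-\frac{p}{p^\ast}a_0^{p^\ast},
\end{equation*}
which is a constant independent of $x$ and $n$. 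The defining relation $a_0f(a_0)+a_0^{p^\ast}=(V_0/\kappa_0)a_0^p$ combined with \ref{f_AR} (giving $pF(a_0)\leq a_0f(a_0)$ since $\mu\geq p$) transforms this into
\begin{equation*}
\mathcal{G}(\varepsilon_n x,v_{\varepsilon_n})\geq\bigl(a_0f(a_0)-pF(a_0)\bigr)+\Bigl(1-\frac{p}{p^\ast}\Bigr)a_0^{p^\ast}\geq\frac{p}{N}\,a_0^{p^\ast}=:C_0>0.
\end{equation*}

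For (b), Lemma \ref{l_melhorado} says both $\liminf_n\int_{\mathcal{P}_n^c}\mathcal{G}(\varepsilon_n x,v_{\varepsilon_n})\dx$ and $\liminf_n\int_{\mathbb{R}^N}\mathcal{G}(\varepsilon_n x,v_{\varepsilon_n})\dx$ equal $p\,c(\mathcal{I}_0)$. Passing to a subsequence along which the full-space integral actually converges to $p\,c(\mathcal{I}_0)$, the trivial inequality $\int_{\mathcal{P}_n^c}\mathcal{G}\leq\int_{\mathbb{R}^N}\mathcal{G}$ forces $\limsup\int_{\mathcal{P}_n^c}\mathcal{G}\leq p\,c(\mathcal{I}_0)$, while $\liminf\int_{\mathcal{P}_n^c}\mathcal{G}\geq p\,c(\mathcal{I}_0)$ is already known; hence $\int_{\mathcal{P}_n^c}\mathcal{G}\to p\,c(\mathcal{I}_0)$ along this subsequence, and therefore $\int_{\mathcal{P}_n}\mathcal{G}(\varepsilon_n x,v_{\varepsilon_n})\dx\to 0$. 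Combining with (a) gives $C_0|\mathcal{P}_n|\leq\int_{\mathcal{P}_n}\mathcal{G}(\varepsilon_n x,v_{\varepsilon_n})\dx\to 0$, so $|\mathcal{P}_n|\to 0$ and hence $|\mathcal{Q}_n|\to 0$ along the same subsequence.

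The only delicate point is checking that the constant $\mathcal{G}$ takes on $\mathcal{P}_n$ is strictly positive: the gain $(1-p/p^\ast)a_0^{p^\ast}=\tfrac{p}{N}a_0^{p^\ast}$ coming from the strict inequality $p<p^\ast$ is exactly what survives after cancelling the leading $(V_0/\kappa_0)a_0^p$ term via the definition of $a_0$, so the critical exponent $p^\ast>p$ and the fact that $a_0>0$ is strictly away from zero (built into its definition) are both essential.
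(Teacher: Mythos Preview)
Your proof is correct and follows essentially the same approach as the paper: both compute $\mathcal{G}(\varepsilon_n x,v_{\varepsilon_n})$ on $\mathcal{P}_n$ to be the constant $(a_0f(a_0)-pF(a_0))+(1-p/p^\ast)a_0^{p^\ast}$ (the paper writes this expression directly without the intermediate $\frac{V_0}{\kappa_0}a_0^p$ form), and both use Lemma~\ref{l_melhorado} to pass to a subsequence along which $\int_{\mathcal{P}_n}\mathcal{G}\,\dx\to 0$. Your version is simply more explicit about why the constant is strictly positive and about the subsequence extraction (using $\mathcal{G}\geq 0$ from Proposition~\ref{p_ge}(iv) and the fact that $\liminf$ along a subsequence dominates the full $\liminf$), details the paper leaves to the reader.
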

	\begin{proof}
		It is clear that Lemma \ref{l_melhorado} implies
		\begin{equation*}
		c(\mathcal{I}_0) = \frac{1}{p}\lim _{n \rightarrow \infty } \int _{ \mathbb{R}^N} \mathcal{G}(\varepsilon_n x, v_n)  \dx= \frac{1}{p}\lim _{n \rightarrow \infty } \int _{ \mathcal{P}^c_n} \mathcal{G}(\varepsilon_n x, v_n)  \dx,
		\end{equation*}
		up to a subsequence. Hence, in this subsequence,
		\begin{equation*}
		\frac{1}{p}\lim _{n \rightarrow \infty }\left[   \left(  \left( f(a_0)a_0  - p F(a_0) \right)      + \left(1-\frac{p}{p^\ast } \right)a_0^{p^\ast }       \right) |\mathcal{P}_n|     \right] =	\frac{1}{p}\lim _{n \rightarrow \infty } \int _{\mathcal{P}_n} \mathcal{G}(\varepsilon_n x, v_n)  \dx = 0. \qedhere
		\end{equation*}
	\end{proof}
	\begin{lemma}\label{l_critico}
		$w_n \mathcal{X}_{\mathcal{Q}^c_n} \rightarrow w_0$ in $L^{p^\ast }(\mathbb{R}^N).$
	\end{lemma}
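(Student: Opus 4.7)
The plan is to deduce the strong convergence from two ingredients: (i) the weak convergence $w_n\mathcal{X}_{\mathcal{Q}_n^c}\rightharpoonup w_0$ in $L^{p^\ast}(\mathbb{R}^N)$, and (ii) the convergence of norms $\|w_n\mathcal{X}_{\mathcal{Q}_n^c}\|_{p^\ast}\to\|w_0\|_{p^\ast}$, and then invoke the uniform convexity of $L^{p^\ast}(\mathbb{R}^N)$.

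Claim (i) is routine. Boundedness of $(w_n)$ in $D^{1,p}(\mathbb{R}^N)$ and the Sobolev embedding give $w_n\rightharpoonup w_0$ in $L^{p^\ast}(\mathbb{R}^N)$, while for any $\phi\in L^{p^\ast/(p^\ast-1)}(\mathbb{R}^N)$ the absolute continuity of the integral combined with $|\mathcal{Q}_n|\to 0$ from Lemma \ref{l_bemtec} forces $\|\phi\mathcal{X}_{\mathcal{Q}_n}\|_{p^\ast/(p^\ast-1)}\to 0$, so $\int \phi\,w_n\mathcal{X}_{\mathcal{Q}_n^c}\,dx\to\int \phi\,w_0\,dx$.

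The crux is (ii). On $\mathcal{Q}_n^c$ one has either $w_n\leq a_0$ or $\varepsilon_n(x+y_n)\in\Omega$, so in both cases the definition of $g$ forces $g(\varepsilon_n(x+y_n),w_n)=f(w_n)+w_n^{p^\ast-1}$; using $1-p/p^\ast=p/N$ this yields the pointwise splitting
$$
\mathcal{G}(\varepsilon_n(x+y_n),w_n)=\bigl[f(w_n)w_n-pF(w_n)\bigr]+\tfrac{p}{N}\,w_n^{p^\ast}\qquad\text{on }\mathcal{Q}_n^c,
$$
in which each summand is nonnegative (the first by \ref{f_AR}, using $\mu\geq p$). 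Since $x_\Omega\in\Omega$ by Proposition \ref{p_vzao}, the identical decomposition holds for $\mathcal{G}(x_\Omega,w_0)$. Lemma \ref{l_melhorado} together with the change of variables $x\mapsto x+y_n$ gives, along the subsequence afforded by Lemma \ref{l_bemtec},
$$
\lim_n\int_{\mathcal{Q}_n^c}\mathcal{G}(\varepsilon_n(x+y_n),w_n)\,dx=p\,c(\mathcal{I}_0)=\int_{\mathbb{R}^N}\mathcal{G}(x_\Omega,w_0)\,dx.
$$
Extracting once more so that each of the two pieces in the splitting has a limit, and using that $w_n\to w_0$ a.e.\ (Proposition \ref{p_convergcomp}) and $\mathcal{X}_{\mathcal{Q}_n^c}\to 1$ a.e.\ up to a further subsequence (which follows from $|\mathcal{Q}_n|\to 0$ via convergence in measure), Fatou's lemma yields $\geq$ for each piece separately; the equality of the sums then saturates both inequalities and produces $\int_{\mathcal{Q}_n^c}w_n^{p^\ast}dx\to\int_{\mathbb{R}^N}w_0^{p^\ast}dx$, which is (ii).

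The main obstacle is precisely this last decoupling step: summed Fatou inequalities a priori only give $\geq$ for the total, so it is essential that both summands of $\mathcal{G}$ are nonnegative in order to force each inequality individually to be an equality; the splitting would fail if any indefinite-sign term were present. Once (i) and (ii) are in hand, uniform convexity of $L^{p^\ast}(\mathbb{R}^N)$ upgrades weak convergence plus norm convergence to strong convergence, and a subsequence-of-subsequences argument removes the dependence on the extractions.
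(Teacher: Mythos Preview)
Your proposal is correct and follows essentially the same approach as the paper: split $\mathcal{G}$ on $\mathcal{Q}_n^c$ into the two nonnegative pieces $f(w_n)w_n-pF(w_n)$ and $(1-p/p^\ast)w_n^{p^\ast}$, use Lemma~\ref{l_melhorado} (via the subsequence of Lemma~\ref{l_bemtec}) to match the sum with $\int_{\mathbb{R}^N}\mathcal{G}(x_\Omega,w_0)\,dx$, apply Fatou to each piece to force equality, and conclude by weak convergence plus norm convergence in the uniformly convex space $L^{p^\ast}(\mathbb{R}^N)$. The only cosmetic differences are that the paper uses the uniform $L^\infty$ bound (Proposition~\ref{p_linf}) together with $|\mathcal{Q}_n|\to 0$ to replace $\int_{\mathcal{Q}_n^c}$ by $\int_{\mathbb{R}^N}$ before invoking Fatou, whereas you invoke Fatou directly with $\mathcal{X}_{\mathcal{Q}_n^c}\to 1$ a.e.\ along a further subsequence; and your extra extraction ``so that each of the two pieces has a limit'' is actually unnecessary, since once the sum converges and each summand satisfies the Fatou inequality with equality in the total, each summand automatically converges.
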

	\begin{proof}
		In what follows, let us denote
		\begin{equation*}
		\mathcal{H}(t) = f(t)t - p F(t)\quad \text{and}\quad \mathcal{H}_\ast(t) = \left(1 - \frac{p}{p^\ast} \right)t^{p^\ast },\ t \geq 0.
		\end{equation*}
		Since $\mathcal{I}'_{x_\Omega}  (w_0)=0$ and $x_\Omega \in \Omega$ we have by Proposition \ref{p_vzao}, Lemma \ref{l_melhorado} and \eqref{newproof},
		\begin{align}
		c(\mathcal{I}_0) = \mathcal{I}_{x_\Omega}  (w_0)=\mathcal{I}_0(w_0) &=	\frac{1}{p} \left[ \int _{\mathbb{R}^N} \mathcal{H}(w_0)\dx + \int _{\mathbb{R}^N} \mathcal{H}_\ast (w_0) \dx \right]\nonumber\\
		&=  \frac{1}{p}\lim _{n \rightarrow \infty } \int _{\mathcal{Q}^c_n } \mathcal{G}(\varepsilon_n (x+y_n ), w_n)  \dx.\label{vem_um}
		\end{align}
		Now using the definition of $g$ and $\mathcal{Q}^c_n$ we get
		\begin{equation}
		\frac{1}{p}	\lim _{n \rightarrow \infty } \left[\int _{ \mathcal{Q}^c_n} \mathcal{H}(w_n) \dx + \int _{\mathcal{Q}^c_n} \mathcal{H}_\ast (w_n)\dx\right] = \frac{1}{p}\lim _{n \rightarrow \infty } \int _{ \mathcal{Q}^c_n} \mathcal{G}(\varepsilon_n (x+y_n ), w_n)  \dx.\label{vem_dois}
		\end{equation}
		On the other hand, by Proposition \ref{p_linf} and Lemma \ref{l_bemtec},
		\begin{equation*}
		\int _{\mathcal{Q}_n} w_n^{p^\ast } \dx,\	\int _{\mathcal{Q}_n} f(w_n)w_n\dx,\ \int _{\mathcal{Q}_n} F(w_n)\dx \leq C'|\mathcal{Q}_n|\rightarrow 0,\text{ as }n \rightarrow \infty,
		\end{equation*}
		for some $C'= C'(M_\ast)>0.$ Therefore,
		\begin{equation}\label{vem_tres}
		\lim _{n \rightarrow \infty }\int _{\mathcal{Q}^c_n}  \mathcal{H} (w_n) \dx =  \lim _{n \rightarrow \infty }\int _{\mathbb{R}^N }  \mathcal{H} (w_n) \dx\quad\text{and}\quad \lim _{n \rightarrow \infty }\int _{\mathcal{Q}^c_n} \mathcal{H}_\ast(w_n)   \dx = \lim _{n \rightarrow \infty } \int _{\mathbb{R}^N }  \mathcal{H} _\ast (w_n) \dx .
		\end{equation}
		Fatou's lemma yields,
		\begin{equation}\label{l_fatou}
		\int_{\mathbb{R}^N} \mathcal{H} (w_0)\dx  \leq \liminf _{n \rightarrow \infty}\int_{\mathbb{R}^N}\mathcal{H} (w_n) \dx\quad \text{and}\quad \int_{\mathbb{R}^N}\mathcal{H}_\ast  (w_0) \dx  \leq \liminf _{n \rightarrow \infty} \int_{\mathbb{R}^N} \mathcal{H}_\ast (w_n) \dx .
		\end{equation}
		If one of the inequalities in \eqref{l_fatou} is strict, then up to a subsequence, by \eqref{vem_um}, \eqref{vem_dois} and \eqref{vem_tres} we would have the following contradiction:
		\begin{align*}
		\int_{\mathbb{R}^N} \mathcal{H} (w_0)\dx + \int_{\mathbb{R}^N}\mathcal{H}_\ast  (w_0) \dx 
		&< \liminf _{n \rightarrow \infty}\int_{\mathbb{R}^N}\mathcal{H} (w_n) \dx + \liminf _{n \rightarrow \infty} \int_{\mathbb{R}^N} \mathcal{H}_\ast (w_n) \dx\\
		& \leq \liminf _{n \rightarrow \infty}\left[   \int_{\mathbb{R}^N}\mathcal{H} (w_n) \dx +  \int_{\mathbb{R}^N} \mathcal{H}_\ast (w_n) \dx     \right]\\
		& = \int_{\mathbb{R}^N} \mathcal{H} (w_0)\dx + \int_{\mathbb{R}^N}\mathcal{H}_\ast  (w_0) \dx. 
		\end{align*}
		Consequently, by \eqref{vem_tres}, we conclude
		\begin{equation*}
		\lim _{n \rightarrow \infty }\int _{\mathcal{Q}^c_n}  \mathcal{H} (w_n) \dx = \int_{\mathbb{R}^N} \mathcal{H} (w_0)\dx \quad \text{and}\quad \lim _{n \rightarrow \infty } \int _{\mathcal{Q}^c_n }  \mathcal{H} _\ast (w_n) \dx = \int_{\mathbb{R}^N}\mathcal{H}_\ast  (w_0) \dx.
		\end{equation*}
		In particular, $\| w_n \mathcal{X}_{\mathcal{Q}^c_n } \|_{p^\ast} \rightarrow \| w_0 \|_{p^\ast} .$ Since $w_n \mathcal{X}_{\mathcal{Q}^c_n } = w_n - w_n \mathcal{X}_{\mathcal{Q}_n } \rightharpoonup w_0$ in $L^{p^\ast} (\mathbb{R}^N),$ conclusion follows by the fact that $\| \cdot \| _{p^\ast}$ is strictly convex.
	\end{proof}
\begin{proposition}\label{p_convergenciaast}
$w_n \rightarrow w_0$ in $L^{p ^\ast}(\mathbb{R}^N),$ up to a subsequence.
\end{proposition}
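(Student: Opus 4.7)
The plan is to combine the already-established $L^{p^\ast}$ convergence on the good set with a vanishing estimate on the bad set $\mathcal{Q}_n$, using the uniform $L^\infty$ bound from Proposition \ref{p_linf}. Writing $w_n = w_n \mathcal{X}_{\mathcal{Q}^c_n} + w_n \mathcal{X}_{\mathcal{Q}_n}$, Lemma \ref{l_critico} already gives the first piece converging to $w_0$ in $L^{p^\ast}(\mathbb{R}^N)$ along a subsequence, so it suffices to show the second piece tends to zero in $L^{p^\ast}(\mathbb{R}^N)$.

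For the bad-set piece, I would use that by Proposition \ref{p_linf}(ii) we have $\|v_{\varepsilon_n}\|_\infty \leq M_\ast$, hence $\|w_n\|_\infty \leq M_\ast$ as well (translation preserves the sup norm). Therefore
\begin{equation*}
\int_{\mathbb{R}^N} \bigl( w_n \mathcal{X}_{\mathcal{Q}_n}\bigr)^{p^\ast} \dx = \int_{\mathcal{Q}_n} w_n^{p^\ast} \dx \leq M_\ast^{p^\ast} \, |\mathcal{Q}_n|,
\end{equation*}
and by Lemma \ref{l_bemtec} the right-hand side tends to $0$ along the same subsequence on which $|\mathcal{Q}_n| \to 0$. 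Thus $w_n \mathcal{X}_{\mathcal{Q}_n} \to 0$ in $L^{p^\ast}(\mathbb{R}^N)$.

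Adding the two convergences yields $w_n \to w_0$ in $L^{p^\ast}(\mathbb{R}^N)$, up to extraction of a further subsequence if needed to make Lemmas \ref{l_bemtec} and \ref{l_critico} hold simultaneously. Since every step relies only on facts already proved, there is essentially no obstacle here; the only subtlety is tracking that the same subsequence can be chosen so that $|\mathcal{Q}_n| \to 0$ and $w_n \mathcal{X}_{\mathcal{Q}^c_n} \to w_0$ in $L^{p^\ast}$ simultaneously, which is immediate by a standard diagonal extraction.
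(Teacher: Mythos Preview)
Your proposal is correct and follows essentially the same approach as the paper: decompose $w_n = w_n \mathcal{X}_{\mathcal{Q}^c_n} + w_n \mathcal{X}_{\mathcal{Q}_n}$, invoke Lemma \ref{l_critico} for the first piece, and use the uniform $L^\infty$ bound together with Lemma \ref{l_bemtec} to kill the second piece. The paper's proof is a one-line reference to these two lemmas, and your write-up is simply the natural unpacking of that reference; the estimate $\int_{\mathcal{Q}_n} w_n^{p^\ast}\dx \leq M_\ast^{p^\ast}|\mathcal{Q}_n|$ already appears inside the proof of Lemma \ref{l_critico} itself.
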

\begin{proof}
Following Lemmas \ref{l_bemtec} and \ref{l_critico}, we have $w_n= w_n \mathcal{X}_{\mathcal{Q}^c_n }  + w_n \mathcal{X}_{\mathcal{Q}_n } \rightarrow w_0$ in $L^{p^\ast}(\mathbb{R}^N).$
\end{proof}
	We use the local behavior of solutions for quasilinear problems to prove the uniform decay of $(w_n).$ 
	\begin{lemmaletter}\label{l_regularity}\cite[Theorem 1.3]{trudinger1967}
		Let $w\in W^{1,p}_{\loca}(U)$ be a weak subsolution of the equation $-\Delta _p w = \mathcal{B}(x,w)$ in $ K=B_{3\varrho}(\xi_0) \subset U,$ where $U$ is a open domain in $\mathbb{R}^N$ and $\mathcal{B} :U \times \mathbb{R} \rightarrow \mathbb{R}$ is a continuous function. More precisely, $w$ satisfies
		\begin{equation*}
		\int_K |\nabla w|^{p-2}\nabla w \cdot \nabla \phi \dx \leq \int_K \mathcal{B}(x,w)\phi \dx,\quad \forall \ \phi \in W^{1,p} _0 (K),\ \phi \geq0.	
		\end{equation*}
		Assume the existence of $\mathcal{C}_\ast>0$ such that $|\mathcal{B} (x,t)| \leq \mathcal{C}_\ast |t|^{p-1}$ in $U\times [-M,M],$ for some $M>0.$ If $0 \leq w <M$ in $K,$ then 
		\begin{equation}\label{regularity}
		\max _{B_\varrho(\xi_0)} w \leq \mathcal{C}_0 \| w \| _{L^p (B_{2 \varrho}(\xi_0))},
		\end{equation}
		where $\mathcal{C}_0 = \mathcal{C}_0 (\varrho, p,\mathcal{C}_\ast, M )>0$ and $\max w$ mean the essential maximum of $w.$
	\end{lemmaletter}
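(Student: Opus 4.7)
The statement is the classical Moser iteration result of Trudinger for quasilinear subsolutions, and my plan is to reproduce its proof by the standard De Giorgi--Moser iteration scheme adapted to the $p$-Laplacian. The key idea is to test the subsolution inequality against powers of $w$ multiplied by a smooth cutoff function, exploit Sobolev embedding to gain integrability, and then iterate.

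First I would introduce a family of cutoff functions: for $\varrho \leq r < R \leq 2\varrho$, take $\eta \in C^\infty_0(B_R(\xi_0))$ with $\eta \equiv 1$ on $B_r(\xi_0)$, $0 \leq \eta \leq 1$, and $|\nabla \eta| \leq C/(R-r)$. For a parameter $\beta \geq 0$ and a regularization $\epsilon>0$, set $\phi = \eta^p (w+\epsilon)^{\beta+1} \in W^{1,p}_0(K)$, which is admissible since $0 \leq w < M$ and $\eta$ has compact support in $K$. Plugging $\phi$ into the subsolution inequality and expanding $\nabla \phi$, I would obtain on the left a term comparable to $(\beta+1)\int \eta^p (w+\epsilon)^\beta |\nabla w|^p$, together with a cross term $p\int \eta^{p-1}(w+\epsilon)^{\beta+1}|\nabla w|^{p-2}\nabla w \cdot \nabla \eta$ to be absorbed by Young's inequality. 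On the right, the hypothesis $|\mathcal{B}(x,t)| \leq \mathcal{C}_\ast |t|^{p-1}$ together with $w<M$ yields a bound of the form $\mathcal{C}_\ast \int \eta^p (w+\epsilon)^{\beta+p}$.

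Setting $\gamma = \beta + p$, after the standard Caccioppoli manipulation and absorbing the cross term, I would arrive at
\begin{equation*}
\int_K \bigl|\nabla\bigl(\eta (w+\epsilon)^{\gamma/p}\bigr)\bigr|^p \dx \leq C\,\gamma^p \biggl[ \frac{1}{(R-r)^p} + \mathcal{C}_\ast \biggr] \int_{B_R(\xi_0)} (w+\epsilon)^\gamma \dx .
\end{equation*}
Combining this with the Sobolev embedding $W^{1,p}_0 \hookrightarrow L^{p^\ast}$ gives
\begin{equation*}
\|w+\epsilon\|_{L^{\gamma p^\ast/p}(B_r(\xi_0))} \leq \bigl( C\gamma^p\bigr)^{1/\gamma}\bigl[(R-r)^{-p}+\mathcal{C}_\ast\bigr]^{1/\gamma} \|w+\epsilon\|_{L^\gamma(B_R(\xi_0))} .
\end{equation*}
This is the fundamental reverse H\"{o}lder step.

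Next I would iterate this estimate with $\gamma_k = p(p^\ast/p)^k$ and a geometric sequence of radii $\varrho_k \downarrow \varrho$ with $R_k - r_k \sim 2^{-k}(\varrho - 0)$. Since $\sum \gamma_k^{-1} < \infty$ and $\sum \log(C\gamma_k^p)/\gamma_k < \infty$ thanks to $p^\ast/p>1$, taking the limit $k\to\infty$ yields
\begin{equation*}
\| w+\epsilon\|_{L^\infty(B_\varrho(\xi_0))} \leq \mathcal{C}_0 \,\|w+\epsilon\|_{L^p(B_{2\varrho}(\xi_0))},
\end{equation*}
and letting $\epsilon \to 0^+$ produces the desired bound \eqref{regularity}. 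The main obstacle is the careful bookkeeping of the iteration constants: one must verify that the product $\prod_k (C\gamma_k^p)^{1/\gamma_k}$ converges, which is the usual delicate point of Moser iteration. The dependence of $\mathcal{C}_0$ on $M$ enters only through the a priori bound $w<M$ used to make the truncation well-defined and to absorb lower-order contributions from $\mathcal{B}$; the dependence on $\mathcal{C}_\ast$ and $\varrho$ is explicit from the reverse H\"{o}lder constant above.
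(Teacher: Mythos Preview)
The paper does not supply its own proof of this lemma: it is stated as a citation of \cite[Theorem~1.3]{trudinger1967} and used as a black box in the proof of Proposition~\ref{p_decay}. Your Moser iteration argument is the standard route to this estimate and is essentially the method of Trudinger's original paper, so there is nothing to compare against within the present work.

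Your outline is correct in its essentials. One small point worth tightening: you justify the admissibility of the test function $\phi = \eta^p (w+\epsilon)^{\beta+1}$ by appealing to $0 \le w < M$, which is fine, but then your claim that ``the dependence of $\mathcal{C}_0$ on $M$ enters only through the a priori bound $w<M$ used to make the truncation well-defined'' slightly undersells the role of $M$. In fact, once $|\mathcal{B}(x,w)| \le \mathcal{C}_\ast w^{p-1}$ holds pointwise (which it does because $w<M$), the iteration constants depend only on $\mathcal{C}_\ast$, $p$, $N$, and $\varrho$, and not on $M$ itself; the stated dependence on $M$ in the lemma is therefore redundant once $\mathcal{C}_\ast$ is fixed. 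This is a cosmetic issue and does not affect the validity of your argument.
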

	\begin{proposition}\label{p_decay}
		$w_\varepsilon (x)=v_\varepsilon (x+y_\varepsilon) \rightarrow 0$ as $|x| \rightarrow \infty,$ uniformly on $(0, \varepsilon_0),$ where $(\varepsilon y_\varepsilon) \subset \mathcal{C}_\Omega$ is given by Proposition \ref{p_brutal}.
	\end{proposition}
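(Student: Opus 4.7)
The plan is to argue by contradiction, combining a uniform Moser-type sup estimate from Lemma \ref{l_regularity} with the global $L^{p^\ast}$-convergence $w_n\to w_0$ from Proposition \ref{p_convergenciaast}. The underlying principle is that once the local $L^\infty$ norm of $w_\varepsilon$ on balls of fixed radius is controlled by its $L^p$ norm on concentric larger balls \emph{with a constant independent of $\varepsilon$}, uniform decay at infinity reduces to controlling the tail of $w_\varepsilon$ in $L^p$ (and hence, by H\"{o}lder, in $L^{p^\ast}$) on balls of radius $2$ centered at points going to infinity.

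First I would establish a uniform local sup estimate. Each $w_\varepsilon$ is a weak solution of $-\Delta_p w_\varepsilon + V(\varepsilon(x+y_\varepsilon))w_\varepsilon^{p-1}=g(\varepsilon(x+y_\varepsilon),w_\varepsilon)$ in $\mathbb{R}^N$, and since $V\geq 0$ it is, in particular, a weak subsolution of $-\Delta_p w_\varepsilon = g(\varepsilon(x+y_\varepsilon),w_\varepsilon)$. Combining Proposition \ref{p_ge}--(i), Remark \ref{r_growth}--(i)--(b) and the uniform bound $\|w_\varepsilon\|_\infty \leq M_\ast$ from Proposition \ref{p_linf}--(ii), one gets $g(\varepsilon(x+y_\varepsilon),t)\leq c_f(\epsilon_0 t^{p-1}+C_0 t^{p^\ast-1})\leq \mathcal{C}_\ast t^{p-1}$ for all $0\leq t\leq M_\ast$, where $\mathcal{C}_\ast := c_f(\epsilon_0+C_0 M_\ast^{p^\ast-p})$ is independent of $\varepsilon$. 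Applying Lemma \ref{l_regularity} with $\varrho=1$, $M=M_\ast$ and $\mathcal{B}(x,t)=g(\varepsilon(x+y_\varepsilon),t)$ then yields a constant $\mathcal{C}_0=\mathcal{C}_0(p,\mathcal{C}_\ast,M_\ast)>0$, independent of both $\varepsilon$ and the center, such that
$$\max_{B_1(\xi)}w_\varepsilon \leq \mathcal{C}_0\,\|w_\varepsilon\|_{L^p(B_2(\xi))},\qquad \forall\,\xi\in\mathbb{R}^N,\ \forall\,\varepsilon\in(0,\varepsilon_0).$$

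Next I would proceed by contradiction: assuming failure, there exist $\delta_0>0$ and sequences $\varepsilon_n\in(0,\varepsilon_0)$, $x_n\in\mathbb{R}^N$ with $|x_n|\to\infty$ and $w_{\varepsilon_n}(x_n)\geq\delta_0$. Shrinking $\varepsilon_0$ if necessary so that only the regime $\varepsilon\to 0$ is relevant, I may extract a subsequence with $\varepsilon_n\to 0$; Lemma \ref{l_yaux} then ensures $\varepsilon_n y_n\to x_\Omega$ along a further subsequence, which by Proposition \ref{p_convergcomp} and Proposition \ref{p_convergenciaast} gives $w_{\varepsilon_n}\to w_0$ in $L^{p^\ast}(\mathbb{R}^N)$ for some $w_0\in L^{p^\ast}(\mathbb{R}^N)$. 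By H\"{o}lder on $B_2(x_n)$ and the triangle inequality,
$$\|w_{\varepsilon_n}\|_{L^p(B_2(x_n))}\leq |B_2|^{1/p-1/p^\ast}\Bigl(\|w_{\varepsilon_n}-w_0\|_{p^\ast}+\|w_0\|_{L^{p^\ast}(B_2(x_n))}\Bigr)\longrightarrow 0,$$
the first summand vanishing by the $L^{p^\ast}$-convergence and the second by dominated convergence ($\mathcal{X}_{B_2(x_n)}\to 0$ a.e.\ since $|x_n|\to\infty$, with dominating integrable majorant $|w_0|^{p^\ast}$). Combining with the uniform sup estimate,
$$\delta_0\leq w_{\varepsilon_n}(x_n)\leq \mathcal{C}_0\|w_{\varepsilon_n}\|_{L^p(B_2(x_n))}\longrightarrow 0,$$
a contradiction.

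The main obstacle is arranging both $\mathcal{C}_\ast$ and $\mathcal{C}_0$ to be genuinely uniform in $\varepsilon$: this relies on the $\varepsilon$-independent bound $\|w_\varepsilon\|_\infty\leq M_\ast$ of Proposition \ref{p_linf}--(ii) and on the fact that the constant in Lemma \ref{l_regularity} depends only on $\varrho, p, \mathcal{C}_\ast$ and $M$, not on the (possibly vanishing) coefficient $V(\varepsilon(x+y_\varepsilon))$ which was discarded when we passed to the subsolution inequality. A secondary technical point is the legitimacy of invoking Proposition \ref{p_convergenciaast} along the extracted subsequence, which is exactly guaranteed by the boundedness of $(\varepsilon y_\varepsilon)\subset\mathcal{C}_\Omega$ established in Lemma \ref{l_yaux}.
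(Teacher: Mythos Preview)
Your argument is correct and follows essentially the same route as the paper: both combine the uniform Moser-type sup bound from Lemma \ref{l_regularity} (with a constant depending only on $p$, $\varrho$, $\mathcal{C}_\ast$, $M_\ast$, hence independent of $\varepsilon$) with the $L^{p^\ast}$-convergence of Proposition \ref{p_convergenciaast}, passing through H\"{o}lder to control $\|w_\varepsilon\|_{L^p(B_2(\xi))}$ by the $L^{p^\ast}$-tail. The only difference is cosmetic: the paper argues directly from the tail estimate $\int_{B_R^c}w_n^{p^\ast}\,dx<\epsilon_0$, whereas you phrase it as a contradiction; and your citation of Lemma \ref{l_yaux} for the boundedness of $(\varepsilon y_\varepsilon)$ should really point to Proposition \ref{p_brutal}, which is where $\varepsilon y_\varepsilon\in\mathcal{C}_\Omega$ is established. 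Your handling of the case $\varepsilon_n\not\to 0$ via ``shrinking $\varepsilon_0$'' is a bit informal, but the paper's own closing line ``the proof follows because $\varepsilon_n\to 0$ is arbitrary'' carries the same looseness, so you are not missing anything the paper supplies.
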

	\begin{proof}
		Proposition \ref{p_convergenciaast} yields:  for $\epsilon_0 >0$ there are $R_0>0$ and $n_0 \in \mathbb{N}$ such that
		\begin{equation}\label{vitali}
		\int _{B^c_R} w_n ^{p^\ast }\dx <\epsilon_0, \quad \forall \, R\geq R_0,\ \forall \, n \geq n_0.
		\end{equation}
		Next we apply Lemma \ref{l_regularity} by taking $\mathcal{B}(x,t) = b_0(x)|t|^{p-2}t,$ with $b_0(x) = g(\varepsilon_n(x+y_n),w_n)w_n^{1-p}.$ By Proposition \ref{p_ge}--(i), it is clear that
		\begin{equation*}
		|b_0(x)|  \leq M_\ast ^{q_0 - p} + C' M_\ast ^{q-p} + M_\ast ^{p^\ast -p},\ \forall \, x \in \mathbb{R}^N,
		\end{equation*}
		for some $C'>0,$ where $M_\ast$ is given by Proposition \ref{p_linf}. Thus $|\mathcal{B}(x,t)| \leq C_\ast |t|^{p-1}$ in $\mathbb{R}^N \times \mathbb{R},$ where $C_\ast>0$ does not depend on $n.$ By \eqref{regularity} and H\"{o}lder's inequality,
		\begin{equation}\label{regularity2}
		\max _{B_{\varrho}(\xi_0)} w_n \leq C_0 \| w_n \| _{L^p (B_{2\varrho}(\xi_0 ))}\leq  C_0 \omega _N (2 \varrho)^{1/N} \left( \int _{B_{2\varrho} (\xi_0)}   w^{p^\ast}_n \dx\right)^{1/p^\ast},
		\end{equation}
		where $\omega _N $ is the volume of the unit ball in $\mathbb{R}^N$ and $C_0=C_0(\varrho,p,q_0,q,M_\ast)>0.$ Consequently, for a fixed $\varrho,$ moving away $\xi _0$ in \eqref{regularity2} and taking into account \eqref{vitali}, the uniform vanishing of $(w_n)$ is implied. The proof follows because $\varepsilon_n \rightarrow 0$ is arbitrary.
	\end{proof}
	\begin{proof}[Proof of Theorem \ref{teo_doo} completed]
		We divide our proof in some steps.
		
		(i) By Proposition \ref{p_decay}, we know that there is $R_1 >0$ such that $w_\varepsilon (x) \leq a_0$ for $|x|\geq R_1,$ for any $0<\varepsilon<\varepsilon_0.$ Thus $v_\varepsilon (x) = w_\varepsilon (x-y_\varepsilon) <a_0,$ whenever $x \in B^c_{R_1}(y_\varepsilon).$ On the other hand, by Proposition \ref{p_vzao}, we have $x_\Omega \in \Omega$ and the existence of $\delta,$ $\varepsilon_0 \in (0,1)$ small enough such that $B_\delta (\varepsilon y_\varepsilon)\subset \Omega ,$ for $\varepsilon \in (0,\varepsilon_0).$ In particular, $B_{\varepsilon^{-1} \delta } (y_\varepsilon)\subset \Omega _\varepsilon$ and taking a smaller $\varepsilon_0$ if necessary, we have $B_{R_1}(y_\varepsilon) \subset B_{\varepsilon^{-1} \delta } (y_\varepsilon)  \subset \Omega _\varepsilon.$ Hence $g(\varepsilon x,v_\varepsilon) = f(v_\varepsilon) + v_\varepsilon^{p^\ast -1}$ and $v_\varepsilon$ is a positive solution of \eqref{P0}. The function $u_\varepsilon (x) := v_\varepsilon (x/\varepsilon)$ is a solution of \eqref{P}.
		
		(ii) Applying Proposition \ref{p_decay} we obtain $z^\ast _\varepsilon \in \mathbb{R}^N$ such that
  \begin{equation*}
      v_\varepsilon (z^\ast_\varepsilon + y_\varepsilon)=w_\varepsilon (z^\ast_\varepsilon) =\max _{\overline{B}_{\varrho_\varepsilon}} w_\varepsilon= \max _{\mathbb{R}^N}w_\varepsilon = \max _{\mathbb{R}^N}v_\varepsilon,
  \end{equation*}
  for some $\varrho_\varepsilon>0.$ In particular, $z_\varepsilon := \varepsilon (z^\ast_\varepsilon + y_\varepsilon)$ is a maximum point of $u_\varepsilon.$ On the other hand, we known that for any $\epsilon_0>0$ there is $R_0>0$ such that $w_\varepsilon (x) < \epsilon _0,$ when $|x|\geq R_0,$ for all $\varepsilon \in (0,\varepsilon_0).$ If there is $(z^\ast _{\varepsilon_n})$ such that $|z^\ast _{\varepsilon _n}| \rightarrow \infty,$ then $\|w_{\varepsilon_n} \|_\infty = w_{\varepsilon_n}(z^\ast _{\varepsilon_n})<\epsilon_0,$ for $n$ large enough. In particular, $\|w_{\varepsilon_n} \|_\infty < \epsilon_0$ for any $\epsilon_0>0,$ and we have the contradiction: $w_{\varepsilon_n}=0.$ Thus, taking smaller $\varepsilon_0,$ there is $R_2>0$ such $z^\ast _\varepsilon \in B_{R_2} \subset \Omega _\varepsilon$ for all $\varepsilon \in (0,\varepsilon_0).$ Thus, by Proposition \ref{p_vzao}, it is clear that $\lim _{\varepsilon \rightarrow 0}V(z_\varepsilon) = V(x_\Omega) = V(0).$ Moreover, by Proposition \ref{p_linf}, $u_\varepsilon (z_\varepsilon) = \| v_\varepsilon \|_\infty \leq M_\ast ,$ for any $\varepsilon \in (0,\varepsilon_0).$
		
		(iii) Now we prove the exponential decay of solutions. Take $\hat{w}_\varepsilon (x) := v_\varepsilon (x+ \hat{z}_\varepsilon),$ where $\hat{z}_\varepsilon = z^\ast_\varepsilon + y_\varepsilon$ is a maximum point of $v_\varepsilon$ as above. Since $\hat{w}_\varepsilon (x)\rightarrow 0,$ as $|x|\rightarrow \infty,$ uniformly in $\varepsilon \in (0,\varepsilon_0),$ there is $R_3>0$ with
		\begin{equation}\label{maisum}
		\frac{f(\hat{w}_\varepsilon) + \hat{w}_\varepsilon^{p^\ast -1}}{\hat{w}_\varepsilon^{p-1}}\leq \frac{1}{2}V_0 \quad \text{in } B_{R_3}^c,\ \forall \, \varepsilon \in (0,\varepsilon_0).
		\end{equation}
		Let any $R'_3>0$ such that $B_{R'_3}\supset \Omega $ and choose $R_\varepsilon > \max\{R_3, \varepsilon^{-1}(R'_3 + \sup_{\varepsilon \in (0,\varepsilon_0) }( \varepsilon|\hat{z}_\varepsilon|)) \}.$ Clearly, if $|x|\geq R_\varepsilon,$ then $|\varepsilon(x + \hat{z}_\varepsilon)| \geq R'_3.$ Thus $V(\varepsilon (x + \hat{z}_\varepsilon))\geq V_0$ in $B^c_{R_\varepsilon},$ and from \eqref{maisum} we have
		\begin{equation}\label{eita1}
		-\Delta _p \hat{w}_\varepsilon + \frac{V_0}{2} \hat{w}_ \varepsilon ^{p-1} \leq 0 \quad \text{in } B_{R_\varepsilon}^c,
		\end{equation}
		in the weak sense, more precisely:
		\begin{equation*}
		\int_{\mathbb{R}^N}|\nabla \hat{w}_\varepsilon|^{p-2} \nabla \hat{w}_\varepsilon \cdot \nabla \varphi +\frac{V_0}{2} \hat{w}_ \varepsilon ^{p-1} \varphi \dx \leq 0, \quad \forall \, \varphi \in W^{1,p}(B^c_{R_\varepsilon}),\ \varphi \geq 0.
		\end{equation*}
		On the other hand, choose $0<(p-1)\alpha^p< V_0/2$ and take $M_\varepsilon >0$ such that $\hat{w}_\varepsilon (x) \leq M_\varepsilon \exp (- \alpha |x|)$ for all $|x|=R_\varepsilon.$ Next we define $\psi_\varepsilon  (x) = M_\varepsilon \exp( - \alpha |x|)$ and use the radial form of the $p$-Laplacian operator to see that
		\begin{equation}\label{eita2}
		-\Delta_p \psi_\varepsilon  +\frac{V_0}{2} \psi _\varepsilon ^{p-1} = \left(\frac{N-1}{|x|}\alpha^{p-1}+ \frac{V_0}{2} - (p-1)\alpha ^p   \right) \psi _\varepsilon ^{p-1}>0\quad \text{in } \mathbb{R}^N,
		\end{equation}
		in the weak sense.
		Following we consider the function
		\begin{equation*}
		\eta _\varepsilon (x) = 
		\left\{
		\begin{aligned}
		&(\hat{w}_\varepsilon - \psi_\varepsilon)^+(x),& \ \text{if }x \in B_{R_\varepsilon} ^c,&\\
		&0, & \ \text{if }x \in B_{R_\varepsilon}.&
		\end{aligned}
		\right.
		\end{equation*}
		Taking $\eta _\varepsilon \in W^{1,p}(B^c_{R_\varepsilon})$ as a test function in \eqref{eita1} and \eqref{eita2} we have
		\begin{align}
		0 & \geq \int _{\mathbb{R}^N} \left( |\nabla \hat{w}_\varepsilon |^{p-2}\nabla \hat{w}_\varepsilon - |\nabla \psi_\varepsilon | ^{p-1}\nabla \psi_\varepsilon \right) \cdot \nabla \eta_\varepsilon +\frac{V_0}{ 2} (\hat{w}_\varepsilon^{p-1} - \psi _\varepsilon^{p-1} )\eta_\varepsilon\dx \nonumber  \\ 
		&\geq \frac{V_0}{2} \int _{A_\varepsilon} (\hat{w}_\varepsilon^{p-1} - \psi_\varepsilon ^{p-1} )(\hat{w}_\varepsilon-\psi_\varepsilon )\dx \geq 0,\label{in_simon}
		\end{align}
		where $A_\varepsilon = B_{R_\varepsilon} ^c \cap [\hat{w}_\varepsilon > \psi _\varepsilon]$ and in both inequalities we used an inequality due to J. Simon \cite[Lemma 2.1]{simon}. Precisely, there exists $C_0>0$ such that
		\begin{equation*}
		\left\langle |x|^{p-2}x - |y|^{p-2}y,x-y \right\rangle \geq \left\lbrace 
		\begin{aligned}
		&C_0|x-y|^p, &&\text{ if }p \geq 2,\\
		&C_0|x-y|^2 (|x| + |y|)^{2-p},&&\text{ if }1<p<2,
		\end{aligned}
		\right.
		\end{equation*}
		where $x,y \in \mathbb{R}^N$ and $\left\langle \cdot, \cdot  \right\rangle $ denotes the standard scalar product in $\mathbb{R}^N.$ Inequality \eqref{in_simon} implies $|A_\varepsilon|=0,$ which by continuity leads to $B_{R_\varepsilon}^c \cap [\hat{w}_\varepsilon \leq \psi _\varepsilon]^c = \emptyset.$ Hence $\hat{w}_\varepsilon \leq \psi _\varepsilon,$ whenever $|x| \geq R_\varepsilon,$ and one can choose $C=C_\varepsilon>0$ to conclude $\hat{w}_\varepsilon \leq C \psi _\varepsilon$ in $\mathbb{R}^N.$ That is,
		\begin{equation*}
		u_\varepsilon (x) \leq  C\psi \left(\frac{x-\varepsilon\hat{z}_\varepsilon}{\varepsilon} \right)=C \exp\left(-\alpha \left|\frac{x-z_\varepsilon}{\varepsilon} \right|\right),\quad \forall \, x \in \mathbb{R}^N.\qedhere
		\end{equation*}

(iv) We notice that $\hat{w}_\varepsilon(x)=u_\varepsilon (\varepsilon x + z_\varepsilon)$ is a solution of the following equation, 
\begin{equation}\label{fim}
    -\Delta _p w + V(\varepsilon (x + \hat{z}_\varepsilon) ) w^{p-1} = g(\varepsilon (x + \hat{z}_\varepsilon),w),\ w>0,\ \text{in} \ \mathbb{R}^N,
\end{equation}
as defined in Section \ref{s_uniform}. Taking any $\varepsilon = \varepsilon_n \rightarrow 0,$ by \eqref{norma_est}, the sequence $\| \nabla \hat{w}_{\varepsilon_n}\| _p$ is bounded, and there is $\hat{w} \in W^{1,p}(\mathbb{R}^N)$ such that $ \hat{w}_\varepsilon \rightharpoonup \hat{w}$ in $D^{1,p}(\mathbb{R}^N),$ up to a subsequence. Proposition \ref{p_convergcomp} yields $\hat{w}_{\varepsilon_n} \rightarrow \hat{w}$ in $C^1(\overline{K}),$ for any compact set $K\subset \mathbb{R}^N.$ Since $\varepsilon_n (x + \hat{z}_{\varepsilon_n} ) \rightarrow x_{\Omega},$ by the arguments of Proposition \ref{p_convergcomp}, $\hat{w} \in C^{1,\alpha}_{\loca} (\mathbb{R}^N)$ and it is a solution of the equation \eqref{p_limit}, because $V(x_{\Omega}) = V(0).$ Moreover, since $\hat{w}_\varepsilon \rightarrow \hat{w},$ as $|x| \rightarrow \infty,$ uniformly in $\varepsilon \in (0,\varepsilon_0),$ we have $\hat{w}(x) \rightarrow 0,$ as $|x| \rightarrow \infty .$ It remains to prove that $\hat{w}$ is a ground state of \eqref{PGS}. Denote $\hat{I}_\varepsilon $ the energy functional defined in $X_\varepsilon$ related to \eqref{fim}. By definition, $\hat{w}_\varepsilon(x) = v_\varepsilon (x + \hat{z}_\varepsilon),$ implying $\hat{I}_\varepsilon (\hat{w}_\varepsilon) = I_\varepsilon (v_\varepsilon)=c(I_\varepsilon).$ Thus, Proposition \ref{p_estbaixo} and Fatou's lemma imply
\begin{equation*}
    c(\mathcal{I}_0)=\liminf_{n \rightarrow \infty } \hat{I}_{\varepsilon_n} (\hat{w}_{\varepsilon_n}) \geq \frac{1}{p} \int _{\mathbb{R}^N} g(x_\Omega , \hat{w}) \hat{w} - p G(x_\Omega , \hat{w})\dx = \mathcal{I}_0(\hat{w}).
\end{equation*}
Nevertheless, it is known that $c(\mathcal{I}_0) \leq \max_{t \geq 0} \mathcal{I}_0(t \hat{w}) = \mathcal{I}_0 (\hat{w})$ \cite{wang-an-zhang2015}. Hence $\hat{w}$ is a solution of \eqref{fim} at the mountain pass level $c(\mathcal{I}_0).$ On the other hand, let $\mathcal{N}_0 = \left\{ v \in W^{1,p} (\mathbb{R}^N)\setminus \{  0\} : \mathcal{I}'_0 (v) \cdot v = 0 \right\} $ be the Nehari set associated to $\mathcal{I}_0.$ By \ref{f_increasing} it is also known (see \cite[Lemma 3.2]{wang-an-zhang2015}) that $ c_{\mathcal{N}_0} = c(\mathcal{I}_0) =\mathcal{I}_0 (\hat{w}),$ where $c_{\mathcal{N}_0} = \inf _{v \in \mathcal{N}_0 }\mathcal{I}_0(v).$ In this case, we have 
\begin{equation*}
     \mathcal{I}_0 (\hat{w}) =  c_{\mathcal{N}_0} = \inf \left\{ \mathcal{I}_0(v): v \in W^{1,p} (\mathbb{R}^N)\setminus \{  0\}\text{ and } \mathcal{I}'_0(v) =0 \right\},
\end{equation*}
which means that $\hat{w}$ is a ground state solution of \eqref{PGS}. 

\end{proof}

\section*{Acknowledgements}
The author would like to thank Ailton Rodrigues da Silva (DMAT/UFRN) for several discussions about this subject.

\end{document}